\crefname{theorem}{Theorem}{Theorems}
\crefname{lemma}{Lemma}{Lemmas}
\crefname{equation}{}{}
\def\cT{{\mathcal T}}
\def\blue{\color{blue}}
\def\cyan{\color{cyan}}
\def\red{\color{red}}
\def\purple{\color{purple}}
\newcommand{\brac}[1]{\left( #1 \right)}
\newcommand\bfrac[2]{\left(\frac{#1}{#2}\right)}
\def\E{{\bf E}}
\def\Pr{\mathbb{P}}
\def\rai{\rightarrow \infty}
\def\ooi{(1+o(1))}
\newcommand{\ignore}[1]{ }
\def\a{\alpha}
\def\d{\delta}
\def\D{\Delta}
\def\e{\varepsilon}
\def\th{\theta}
\def\Th{\Theta}
\def\l{\lambda}
\def\om{\omega}
\def\Om{\Omega}
\newcommand{\ra}{\rightarrow}
\newcommand{\imp}{\implies}
\def\sm{\! \setminus \!}
\newcounter{rot}%\addtocounter{rot}{1}, \therot
\def\half{{\scriptstyle \frac 12}}
\newcommand{\floor}[1]{\lfloor #1 \rfloor}
\newcommand{\ceil}[1]{\lceil #1 \rceil}
\newcommand{\TheoremGnp}{}
\newcommand{\TheoremLine}{}
\newcommand{\TheoremKnSync}{}
\newcommand{\LemmaGNPStructure}{}
\newtheorem{theorem}{Theorem}
\newtheorem{lemma}[theorem]{Lemma}
\newtheorem{corollary}[theorem]{Corollary}
\begin{document}

%\begin{document}
%
%\makeatletter
\title{Discrete Incremental  Voting}

\author{Colin Cooper\thanks{Department of  Informatics,
King's College London, London WC2R 2LS, UK.}
%Research supported in part by EPSRC grant EP/M005038/1.}
\and Tomasz Radzik$^*$ 
\and Takeharu Shiraga\thanks{
Department of Information and System Engineering, Chuo University, Tokyo, Japan.
Research supported by JSPS KAKENHI Grant Number 19K20214.}
}
\maketitle \makeatother

\begin{abstract}
We consider a type of pull voting suitable for discrete numeric opinions which can be compared on a linear scale, for example, 
1 ('disagree strongly'), 2 ('disagree'), $\ldots,$ 5 ('agree strongly'). On observing the opinion of a random neighbour, a vertex changes its opinion incrementally towards the value of the neighbour's opinion, if different. For opinions drawn from a set $\{1,2,\ldots,k\}$, 
the opinion of the vertex would change by $+1$ if the opinion of the neighbour is larger, or by $-1$, if it is smaller.

%The outcome of this process cannot be predicted,% 
It is not clear how to predict the outcome of this process,
but we observe that the total weight of the system,
that is, the sum of the individual opinions of all vertices, is a martingale. This allows us analyse the outcome of the process on some classes of dense expanders such as clique graphs $K_n$ and random graphs $ G_{n,p}$
for suitably large $p$. 
If the average of the original opinions  
satisfies $i \le c \le i+1$ for some integer $i$, then the asymptotic probability that opinion $i$ wins is  $i+1-c$, and the probability that opinion $i+1$ wins is $c-i$.
With high probability, the winning opinion cannot be other than $i$ or $i+1$.

To contrast this, we show that for a path
and opinions $0,1,2$ arranged initially in
non-decreasing order along the path, 
the outcome is very different.
Any of the opinions can win with constant probability, provided that each of the two extreme opinions $0$ and $2$ is initially supported by a constant fraction of vertices.  
\end{abstract}

\maketitle

\section{Introduction}

%\noindent {\bf Background on distributed pull voting.}\;\;
\paragraph{Background on distributed pull voting.}
Distributed voting has  applications in various
fields of computer science including consensus and leader election in large networks
\cite{BMPS04,HassinPeleg-InfComp2001}.
Initially, each vertex has some value chosen from a set $S$, and the aim is 
that the vertices reach consensus on 
(converge to) the same value, which 
should, in some sense, reflect the 
initial distribution of the values.
Voting algorithms are usually simple,
fault-tolerant, and easy to implement \cite{HassinPeleg-InfComp2001,Joh89}.

Pull voting is a  simple form of distributed voting.
At each step, a randomly chosen vertex (asynchronous process), or
each vertex (synchronous process), replaces its opinion with that of  randomly chosen neighbour.
The probability a particular opinion, say opinion $A$, wins
is $d(A)/2m$, where $d(A)$ is the sum of the degrees of the vertices initially holding opinion $A$, and 
$m$ is the number of edges in the graph; see Hassin and Peleg~\cite{HassinPeleg-InfComp2001}
and Nakata {\em et al.}~\cite{Nakata_etal_1999}.
The pull voting process can be modified to consider two or more opinions at each step. The aim of this modification is twofold; to ensure the majority (or plurality) wins, and to speed up the run time of the process.
Work on {\em best-of-$k$} models, where 
a vertex replaces its opinion with the opinion most represented in a sample of $k$ opinions, includes 
\cite{AD,Becchetti, Bec2, Bec3, Petra,CER,CRRS,Ghaffari,NanNico,NS}.

The general model of pull voting regards the opinions as incommensurate, and thus not comparable on a numeric scale. In contrast to this, Doerr at al. \cite{Doerr} consider opinions drawn from an ordered set and a process which aims to converge to the median. At each step a random vertex selects two neighbours and replaces its opinion by the median of all three values (including its own current
value).

In this paper we consider another such variant of pull voting,  in which the opinions are comparable on a linear scale.
This variant, which we call {\em discrete incremental voting},
can be seen as modelling the convergence to consensus of a group opinion, based on compromise during extended discussion. 
Thus the final opinion may not be one held originally.
For any connected graph, if the initial 
{(degree-weighted)} average of the opinions is $c$, 
{then the expectation  of the final 
opinion (a random variable)
is always equal to the initial average $c$. Furthermore, for some classes of expanders, the process  converges to an integer average (either $\floor{c}$ or $\ceil{c}$) with high probability.}
Seen in this context, the pull voting processes above mirror the statistical measures of {Mode, Median and Mean, for pull voting, median voting
and discrete incremental voting, 
respectively}.

%\vspace{0.1in}
%\noindent
%{\bf Discrete incremental voting: An introduction.}\;\
\paragraph{Discrete incremental voting: An introduction.}
We assume the initial opinions of the vertices are chosen from among the integers  
$\{1,2,...,k\}$.
As a simple example, suppose the entries   reflect the views of the vertices about some issue, and range from  1 ('disagree strongly') to $k$ ('agree strongly'). Then it seems unrealistic that a vertex would change its opinion to that of a neighbour, (as in pull voting), based only on observing what the neighbour thinks.
However, people being what they are, it seems possible that they may modify their opinion slightly towards the opinion of their neighbour on observing it.
%See for example Axelrod \cite{Axelrod} for agent %based models, and Moussaid et al. \cite{Mouss} %for distributed models of opinion formation in %sociology. (References not appropriate)

In the simplest case,  suppose that a vertex $v$ has opinion $i$ and observes at its
neighbour $u$ opinion $j$. If $j>i$, then
vertex $v$ modifies its opinion to $i+1$ (tends to agree more). Similarly, if the observed neighbour $u$ has value $j<i$,
vertex $v$ changes its opinion to $i-1$ (tends to disagree more).
The neighbour $u$ does not change its opinion
at this interaction.
That this process converges, and the value it converges to, is the topic of this paper.

{We  consider two related asynchronous and one synchronous variants of incremental voting. 
Given a connected graph
$G = (V,E)$ with $n$ vertices
and $m$ edges,
let $X(t)=(X_v(t): v\in V)$ be the vector of integer opinions at step $t$.
The value of $X(t+1)$ is obtained as follows.
}

{\sc Asynchronous vertex process}: \;
Given $X=X(t)$, pick a vertex $v$ uniformly at random (u.a.r.) and an adjacent 
edge $(v,w)$ u.a.r.
The following update rule  $X_v\ra X'_v $ holds,

\begin{equation}\label{Tab}
\left.
\begin{array}{lll}
X_v <X_w & \imp & X'_v=X_v+1\\
X_v=X_w & \imp & X'_v=X_v\\
X_v > X_w & \imp & X'_v=X_v-1
\end{array}
\; \right\}
\end{equation}

% \begin{eqnarray}\label{Tab}
% X_v <X_w & \imp & X'_v=X_v+1\\
% X_v=X_w & \imp & X'_v=X_v\nonumber\\
% X_v > X_w & \imp & X'_v=X_v-1 \nonumber
% \end{eqnarray}

{\sc Asynchronous edge process%
\footnote{The edge process can be seen as a vertex process where the leading vertex $v$ is sampled with probability $\pi_v=d(v)/2m$ rather than uniformly at random.}}: \; 
Pick a random endpoint $v$ of a random edge $e=(v,w)$ selected u.a.r. The value $X_v$ at vertex $v$ is updated to $X_v'$
using  the rules  in \eqref{Tab} above.

{
% We also consider  a synchronous version of vertex process: 

{\sc Synchronous vertex process}: \;
Given $X=X(t)$,  each vertex $v$ picks an adjacent 
edge $(v,w)$
u.a.r.\
% independently and uniformly at random
and updates its value $X_v\ra X'_v $ according to~\eqref{Tab}.
}

%\vspace{0.1in}
%\noindent
%{\bf Discrete incremental voting: Main results.}\;\;
\paragraph{Discrete incremental voting: Main results.}
If the initial set of opinions is $\{0,1\}$ (or $\{i,i+1\}$ for an integer $i$)
the incremental voting (with updates~\eqref{Tab}) is equivalent to ordinary 'two-value' pull voting as studied by
\cite{HassinPeleg-InfComp2001} and others: when a vertex $v$ updates its value using the value at a neighbour $w$, vertex $v$ simply takes on the value from vertex $w$.
The 'two-value' pull voting comes in the same three variants: asynchronous or synchronous vertex process, or
asynchronous edge process.
In a vertex process (asynchronous or synchronous), the probability that opinion $0$ wins, $d(0)/2m$, is proportional to the sum $d(0)$ of the degrees of the vertices initially holding this opinion.
The simplest  case which differs from pull voting has opinion values in $\{0,1,2\}$.
In general we assume the initial values are in the range $\{0, 1, ..., k\}$ or $\{1, ..., k\}$, 
where appropriate bounds on $k$ may be required in the analysis. 
% where $k$ is finite or tending slowly to infinity in terms of the graph size ($k=o(n)$). 

In order to reach a consensus opinion, all other opinions must be eliminated. The only way to irreversibly reduce the number of opinions, is to remove one of the extreme values in the order, leading to the next stage. 
{The process continues through such stages}
% This being repeated 
until one opinion remains. 
Returning to our original example, 1 ('disagree strongly'), 2 ('disagree'), 3 ('indifferent'), 4 ('agree'), 5 ('agree strongly'),
suppose we start with initial opinions $\{1,2,5\}$.  Then a possible evolution of the system is
\[
\{1,2,5\}\ra \{1,2,3,4\} \ra \{2,4\} \ra \{2,3\}\ra \{3\},
\]
where 
{the sets of opinions at the beginning of each stage are indicated, and}
each '$\ra$' represents a sequence of one or more steps constituting one stage.
The intermediate values may disappear but then they appear again (in the above example,
opinion $3$ disappeared in stage 2 and they appeared again in stage 3).
Eventually, as extreme values disappear, we reach the final stage of voting when  only two adjacent values  remain. In the example above the final stage has values $\{2,3\}$.
At this point the process reverts to ordinary two-value pull voting.  Suppose only values $\{i,i+1\}$ remain.
{Let $A_j$, $j \in\{i,i+1\}$, be the set of vertices with value $j$ at the start of this final stage, and 
$N_j=|A_j|$, so $N_i + N_{i+1}=n$. Let $d(A)=\sum_{v \in A} d(v)$ be the total degree of set $A$. } The probability  
% $P(i)$ 
that $i$ wins is 
\begin{equation} \label{probw}
\Pr(i \text{ wins})=\frac{N_i}{n} \quad (\text{Edge process}), \quad \quad
{\Pr(i \text{ wins})=\frac{d(A_i)}{2m} \quad(\text{Vertex process}).}
\end{equation}

In  Section \ref{Sec2}, Lemma \ref{Lemma1}, we prove that the average weight of the process is a martingale in both the asynchronous and synchronous  processes. 
(The vertex opinion, value, and weight refer to the same 
quantity.)
%, and similarly in Lemma \ref{Lemma2} for the synchronous process.  
This allows us to establish Theorem \ref{ThA} which gives the distribution of winning opinions in the case where the initial average $c$ is maintained throughout the process. In this case, when only two opinions $\{i,i+1\}$ remain, we have $i \le c \le i+1$ and their winning probability is determined as in \eqref{probw}  above. 

It is shown in Section \ref{Sec2}, Lemma~\ref{T2}, that the expected time 
for one of the two extreme opinions to disappear is $O(T_2)$, where $T_2$ is the (worst-case) expected time to consensus for two-value pull voting on the same graph. Thus the expected time to consensus is $O(k T_2)$.
See \cite{CERO} and \cite{Peres} for graph specific bounds on  the value of $T_2$. 
 However, for the clique graph $K_n$ and some 
 % other 
 classes of expanders%
 \footnote{We view expansion in terms of the relative number of 
 % neighbours $N(S)$ of a set of vertices $S$.
 edges between sets $S$ and $V\setminus S$.}, 
 as the number of opinions $k$ increases, the bound $O(k T_2)$ on consensus time becomes weak. 
 For such graphs, we can show that with high probability the extreme values disappear faster than the time to complete two-value pull voting.
%  from the final state 
%  $\{i,i+1\}$.\footnote{%
% In the actual formal analysis,
% there may be a small $o(n)$ number of other opinions to deal with 
% in the final stage,
% so not exactly just two-value voting.}
 Thus, with suitable bounds on $k$, the expected run time can be reduced from 
$O(k T_2)$ to $O(T_2)$, and is directly comparable with ordinary pull voting.
See e.g.,  Lemma \ref{lem:extreme}.
Ideally (for easier analysis) we would like one of the two extreme opinions to disappears completely before 
moving on to considering the next extreme opinion. 
However, to obtain good bounds, in some cases we have to move on to the next extreme opinion, say the next smallest opinion $\kappa$, while 
some small number of vertices may still hold opinions smaller than $\kappa$. 
% (or all but $o(n)$ of them in total weight)

As the average opinion is a martingale {(details of this are in the next Section~\ref{Sec2})}, 
in cases where the process converges rapidly to two neighbouring states, martingale concentration allows us to  use Theorem \ref{ThA} to predict the outcome of the process. This
is fundamental for our analysis  on expander graphs. For the cases we studied, $G_{n,p}$ and $K_n$, essentially the process converges quickly to two neighbouring states $\{i,i+1\}$. 
Because the time to consensus in the final stage is  determined by known results, i.e., two-value pull voting, we only need to estimate the time to reach a final pair of values $\{i,i+1\}$ where w.h.p. $i \le c \le i+1$. The overall expected time to consensus is determined by the (slower) final stage of two-value pull voting; namely $O(n)$ for the synchronous, and $O(n^2)$ for the asynchronous process. 

{We illustrate incremental voting
using three examples:} 
the asynchronous process on 
$G_{n,p}$ (Theorem~\ref{ThGnp-intro}) and the synchronous process on $K_n$ (Theorem~\ref{ThKnSync}), both of which work as one might expect, and an asynchronous process on the  path which does not 
(Theorem~\ref{TheLine-intro}). 

\paragraph{Notation.}
%{{\bf Notation.}
For functions $a=a(n)$ and $b=b(n)$, $a \sim b$  denotes $a=b(1+o(1))$, 
where $o(1)$ is a function of $n$ which tends to zero as $n \rai$. We use $\om$ to denote a generic quantity tending to infinity as $n \rai$, but suitably slowly as required in the given proof context.  
An event $A$ on an $n$-vertex graph holds with high probability (w.h.p.),
if $\Pr(A)=1-o(1)$. 
% for sufficiently large $n$,  where $o(1)=o_n(1)$ is a function of $n$ which tends to zero as $n \rai$.

% \bigskip

% \vspace{0.1in}
% \noindent
% {\bf Synchronous and asynchronous incremental voting on $K_n$.}\;
% Although we described the edge and vertex processes for the asynchronous case, the vertex process remains well defined in the synchronous case.
% The following theorem for the synchronous process on $K_n$ is established in Section \ref{KnSync}.

\renewcommand{\TheoremKnSync}{%
{\sc Synchronous incremental voting on $K_n$}. \\
Let the initial values {of the vertices of $K_n$} be chosen from $\{1,2,\dots,k\}$, where $k=o(n/(\log n)^2)$, and 
let $S(0)= \sum_{v\in V}X_v(0) =cn$. \;\;
\begin{enumerate}[(i)]
\item
If $i < c < i+1$, then $\,\Pr(i \text{ wins}) \sim i+1-c$ and $\,\Pr(i+1 \text{ wins}) \sim c-i$. 
If $c =i (1+o(1))$, then $\Pr(i \text{ wins}) \sim 1$.
%{\blue The remaining opinions are reduced to values in $\{i,i+1\}$ in $O(k \log n)$ steps w.h.p.. (I'm not sure %about this sentence. TK) }
\item
The number of opinions is reduced to at most three consecutive values 
% $\{i-1,i,i+1\}$, 
in $O(k \log n)$ steps w.h.p., and the expected time for the whole process to finish is $O(n)$.
\end{enumerate}
}

\begin{theorem}\label{ThKnSync}
\TheoremKnSync
\end{theorem}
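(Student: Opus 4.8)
The plan is to establish part~(ii) first and then deduce part~(i) from it together with the martingale of Lemma~\ref{Lemma1}. The structure for (ii) is: (a) the set of distinct opinions collapses to at most three consecutive values within $O(k\log n)$ steps w.h.p.; and (b) once three (then two) consecutive values remain, the process reaches consensus in expected $O(n)$ further steps. Since $k=o(n/(\log n)^2)$ forces $k\log n=o(n)$, the collapse phase is negligible and the total expected time is $O(n)$, matching the final (two-value) stage.

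For (a) I would use the key feature of the synchronous process on $K_n$: conditioned on $X(t)$, the updated opinions $X_v(t+1)$ are mutually independent over $v\in V$, since each vertex samples its neighbour independently; hence each level count $|A_j(t+1)|$ is a sum of independent indicators and Chernoff bounds apply. Recall that the maximum $M(t)$ is non-increasing and the minimum $m(t)$ non-decreasing, so the range $R(t)=M(t)-m(t)$ is non-increasing. Track $|A_M(t)|$ where $M=M(t)$. If the two top levels satisfy $|A_M|+|A_{M-1}|\le(1-\e)n$, then
\[
\E\bigl[\,|A_M(t+1)|\;\big|\;X(t)\bigr]\ \le\ \frac{|A_M|\bigl(|A_M|+|A_{M-1}|\bigr)}{n-1}\ \le\ \bigl(1-\tfrac{\e}{2}\bigr)|A_M|,
\]
so $|A_M|$ contracts geometrically and, by a Chernoff bound together with a union bound over $O(\log n)$ steps, hits $0$ within $O_{\e}(\log n)$ steps w.h.p., at which point $M$ drops and $R$ decreases. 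If instead $|A_M|+|A_{M-1}|>(1-\e)n$, then fewer than $\e n$ vertices lie at levels $\le M-2$; in particular, when $R\ge 3$ the two \emph{bottom} levels together hold $<\e n$ vertices, so the symmetric estimate makes $|A_m|$ contract geometrically, the minimum rises within $O(\log n)$ steps w.h.p., and again $R$ decreases. Iterating, each unit decrease of $R$ costs $O(\log n)$ steps w.h.p.; since $R(0)\le k-1$, after $O(k\log n)$ steps w.h.p.\ at most three consecutive values survive. For (b), the slowest case is a ``saturated'' configuration with almost all vertices at one middle value: the total fraction $s$ at the two surviving extremes then obeys $s'\approx s(1-s/2)$, so it can take $\Theta(n)$ steps to drop to two values; once two consecutive values $\{i,i+1\}$ remain the process is ordinary synchronous pull voting on $K_n$, with consensus time $O(n)$ (cf.\ the discussion around Lemma~\ref{T2}). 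Adding $O(k\log n)=o(n)$ gives expected time $O(n)$.

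For (i), write $\bar X(t)=S(t)/n$; by Lemma~\ref{Lemma1} this is a martingale with $\bar X(0)=c$, converging to the integer winning opinion $W$. During the $O(k\log n)$-step collapse, the one-step conditional variance of $\bar X$ is at most $1/n$ (independence plus bounded increments), so $\sum_t\operatorname{Var}\bigl(\bar X(t+1)\mid X(t)\bigr)=O(k\log n/n)=o(1)$, and a martingale maximal inequality gives $\sup_{t\le O(k\log n)}|\bar X(t)-c|=o(1)$ w.h.p. During a subsequent (possibly $\Theta(n)$-long) passage from three to two values, the extreme fraction $s(t)$ decays with $\sum_t s(t)=O(\log n)$ while the per-step conditional variance of $\bar X$ is $O(s(t)/n)$, so $\bar X$ again moves only $o(1)$. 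Hence when two consecutive values $\{j,j+1\}$ first remain we have $\bar X\in[j,j+1]$ and $\bar X=c+o(1)$, which, for $i<c<i+1$ with $c$ bounded away from the integers, forces $j=i$. Then by \eqref{probw} — equivalently, by optional stopping on the bounded martingale $\bar X$, now confined to $[i,i+1]$ and ending at $W\in\{i,i+1\}$, so $c+o(1)=\E W=i+\Pr(W=i+1)$ — we obtain $\Pr(i+1\text{ wins})\sim c-i$ and $\Pr(i\text{ wins})\sim i+1-c$; this realises exactly the scenario of Theorem~\ref{ThA} in which $c$ is preserved throughout. When $c=i(1+o(1))$ the same argument places the surviving pair around $i$ with the lower value holding $1-o(1)$ of the mass, giving $\Pr(i\text{ wins})\sim1$.

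The main obstacle is step~(a) of part~(ii): carrying out the two-regime contraction with thresholds chosen so that $|A_M|$ (resp.\ $|A_m|$) provably hits $0$ within $O(\log n)$ steps w.h.p.\ and the controlling two-level population stays small enough for geometric contraction to persist — this needs control of the inflow into a near-extreme level from the level just inside it, and a careful union bound over the $O(k\log n)$ steps and $O(k)$ range-decrements. The companion fact for part~(i) — that $\bar X$ does not drift by a constant before the two-value stage, immediate during the fast phase but requiring the front-loaded-variance estimate $\sum_t s(t)=O(\log n)$ during any slow three-to-two passage — is the secondary point needing care.
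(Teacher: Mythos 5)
Your overall route is the same as the paper's (rapid collapse of the extreme opinions, martingale concentration of the average during the fast phase, then a two‑value pull‑voting endgame combined with \cref{ThA}), but two steps do not go through as written. First, in your collapse phase the per‑step case analysis does not produce a quantity that contracts over an entire window of $O(\log n)$ steps. You show that \emph{at a given step} either $|A_M|+|A_{M-1}|\le(1-\e)n$ (so $|A_M|$ contracts in expectation) or not (so $|A_m|$ contracts), but which case holds can change from one step to the next: mass flowing into $A_{M-1}$ from $A_{M-2}$ can push the top two levels above $(1-\e)n$, and then back. Since neither $|A_M|$ nor $|A_m|$ is guaranteed to contract throughout any $\Omega(\log n)$ window, the conclusion ``hits $0$ within $O(\log n)$ steps w.h.p.'' does not follow. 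The paper's fix (\cref{lem:extreme}) is to track the \emph{product} $Z=N_sN_\ell$ of the two extreme counts: when $\ell\ge s+3$ the updated counts $N_s'$ and $N_\ell'$ are independent binomials, and $\E[Z'] = Z\cdot\frac{N_s+N_{s+1}}{n}\cdot\frac{N_{\ell-1}+N_\ell}{n}\le Z/4$ unconditionally, which absorbs your two cases into a single supermartingale and gives elimination of an extreme in $O(\log n)$ steps by Markov's inequality. Your argument is one algebraic step away from this, but as stated it has a hole.

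Second, the three‑value endgame is not controlled. Your deterministic recursion $s'\approx s(1-s/2)$ and the claim $\sum_t s(t)=O(\log n)$ over a $\Theta(n)$‑step passage are exactly the statements that need proof: once $N_1N_3/n$ is small the drift is dominated by the fluctuations, $s(t)$ does not concentrate around $2/t$, and the process need not pass through a genuine two‑value stage at a predictable time. The paper instead shows (\cref{lem:N1N3ineq,lem:threeleft}) that within $O(\sqrt n\log n)$ steps either one extreme vanishes or both extremes have size at most $2\sqrt n$; in the first case \cref{ThA} applies directly, and in the second case the winner is decided not by optional stopping on $[j,j+1]$ (the support is still three values) but by a coupling in which the middle class of incremental voting stochastically dominates the corresponding class of synchronous pull voting, so the middle value wins w.h.p. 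Your proposal presupposes that two consecutive values eventually remain with the average still concentrated, which is precisely the configuration the paper has to argue around; without the ``both extremes small'' alternative and the domination coupling, part (i) is not established in the case $c=i(1+o(1))$, and the $O(n)$ expected completion time in part (ii) is not justified.
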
 

 A similar analysis  for the asynchronous process, giving w.h.p. convergence to three  adjacent values in $O(nk \log n)$ steps is given in Appendix~\ref{KnAsync}.  The expected time for the asynchronous process to finish is $O(n^2)$.

% \vspace{0.1in}

% \bigskip

% \noindent
% {\bf Asynchronous incremental voting on random graphs.} \;
% The following theorem is established in Section \ref{Expan}. 

% to keep the same wording of this theorem when it's repeated
\renewcommand{\TheoremGnp}{%
{\sc Asynchronous incremental voting on random graphs.}\\
Let $G \in G_{n,p}$, where $np \ge \log^{1+\e} n$ for some constant $\e>0$.
Let the initial values be in $\{1,2,...,k\}$ where
$k$ is a fixed positive integer, and $S(0)=\sum_{v\in V}X_v(0) = cn$
% \sum_{j=1}^k j N_j(0)$ 
be the initial total weight.
% where $S(0)=cn$.\;\;
\begin{enumerate}[(i)]
\item
If $i < c <i+1$,
% for some $c>0$  
then 
%and $1 \le i <k$, then w.h.p.,\\
$
\Pr(i \text{ wins}) \sim i+1-c$, and $\Pr( i+1 \text{ wins}) \sim c-i$. 
If $c =i (1+o(1))$, then $\Pr(i \text{ wins}) \sim 1$.
\item
The expected time for the asynchronous process to finish is $O(n^2)$.
\end{enumerate}
}

\begin{theorem}
\label{ThGnp-intro}
\TheoremGnp
\end{theorem}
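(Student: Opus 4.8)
The plan is to reduce the theorem to the two ingredients already advertised in the introduction: the martingale property of the average weight (Lemma~\ref{Lemma1}) together with its concentration, and a structural/timing statement (in the spirit of Lemma~\ref{lem:extreme} and Lemma~\ref{T2}) saying that on a good $G_{n,p}$ expander the extreme opinions are shed quickly, so that the process collapses to two adjacent values $\{i,i+1\}$ with $i\le c\le i+1$ long before the final two-value pull-voting stage completes. Once we are in that final stage, Theorem~\ref{ThA} together with \eqref{probw} (edge process version, $\Pr(i\text{ wins})=N_i/n$) gives the winning probabilities, provided the total weight $S(t)$ has stayed $\sim cn$ throughout — which is exactly what martingale concentration provides.

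First I would fix the good event: by standard $G_{n,p}$ estimates with $np\ge\log^{1+\e}n$, w.h.p.\ every vertex has degree $d(v)=(1+o(1))np$, the graph is a strong expander, and the two-value pull-voting consensus time $T_2$ on $G$ is $O(n^2)$ (asynchronous), with the analogous $O(n)$ bound synchronously; condition on this event for the rest of the argument. Second, by Lemma~\ref{Lemma1} the normalized average $S(t)/n$ is a martingale; since a single step changes some $X_v$ by at most $1$, the martingale has bounded increments, so Azuma–Hoeffding shows $S(t)=cn+O(\sqrt{t}\,\om)$ uniformly over the $O(k\,T_2)=O(n^2)$ steps of the whole process — in particular $S(t)/n = c+o(1)$ throughout, since $k$ is a fixed constant. (Here one must be slightly careful that consensus is a stopping time, but optional stopping / a maximal inequality handles it.) Third, I would use the expander structure to run the "peeling" argument: invoke Lemma~\ref{lem:extreme}-type bounds to show that each extreme opinion is eliminated (or reduced to a negligible $o(n)$ residue) in $o(n^2)$ asynchronous steps, so after $o(n^2)$ steps only two adjacent opinions $\{i,i+1\}$ remain; because $S/n=c+o(1)$ at that moment and the only surviving weights are $i$ and $i+1$, we must have $i\le c\le i+1$ and $N_i/n = i+1-c+o(1)$. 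Finally, apply \eqref{probw}: $\Pr(i\text{ wins}) = N_i/n \sim i+1-c$ and $\Pr(i+1\text{ wins})\sim c-i$; the degenerate case $c=i(1+o(1))$ forces $N_{i+1}=o(n)$, hence $\Pr(i\text{ wins})\sim1$. Part~(ii), the $O(n^2)$ expected running time, then follows because the peeling phase costs $o(n^2)$ and the concluding two-value stage costs $O(T_2)=O(n^2)$ in expectation.

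The main obstacle is the third step: controlling the peeling phase on $G_{n,p}$ quantitatively. Two-value pull voting on an expander already takes $\Theta(n^2)$ asynchronous steps, so "eliminate each extreme opinion in $o(n^2)$ steps" is \emph{not} a black-box consequence of $T_2$; it requires exploiting that an extreme opinion can only \emph{shrink} while in the peeling regime — a vertex holding the minimum value $i_{\min}$ never decreases, and whenever it interacts with a neighbour of a different (necessarily larger) value it strictly increases — so the size of the bottom class is a supermartingale with a genuine negative drift proportional to the fraction of its boundary edges, which on an expander is $\Omega(1)$ of its incident edges once the class is a constant fraction, and remains substantial even for smaller classes. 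Making this drift argument precise, handling the coupling between the several opinion classes, and proving the $o(n)$-residue statement carefully (so that the leftover tiny classes do not perturb $S/n$ or the final pull-voting outcome) is where the real work lies; everything else is either cited from earlier sections or a routine Azuma/Chernoff estimate. For the synchronous variant on $K_n$ (the companion Theorem~\ref{ThKnSync}) the same scheme applies with $T_2=O(n)$ and the sharper $O(k\log n)$ peeling bound quoted there, and with the martingale concentration run over $O(n)$ rather than $O(n^2)$ steps.
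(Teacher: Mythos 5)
Your overall architecture matches the paper's: martingale property of the total weight plus Azuma concentration, a fast ``peeling'' phase that reduces the system to two adjacent opinions $\{i,i+1\}$ with $i\le c\le i+1$, and then the exact two-value pull-voting identity of Theorem~\ref{ThA}(ii) (via \eqref{probw}) to read off the winning probabilities. However, there are two concrete problems. First, your step claiming $S(t)/n=c+o(1)$ \emph{uniformly over the $O(n^2)$ steps of the whole process} is false as stated: Azuma--Hoeffding gives deviation $O(\sqrt{t}\,\om)$, which at $t=\Theta(n^2)$ is $\Theta(n\om)$, not $o(n)$. Concentration of $S$ is only available up to $T=o(n^2)$ steps --- this is precisely the hypothesis of Theorem~\ref{ThA}(iii) and the reason the paper must prove the peeling completes in $o(n^2)$ steps. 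After the final stage begins, the outcome is pinned down not by continued concentration but by the exact optional-stopping identity $ipn+(i+1)qn=W$ for two-value voting, so you should only claim $S(T)\sim cn$ at the moment the final stage is reached.

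Second, the peeling step, which you correctly flag as the main obstacle, is left unexecuted, and the heuristic you give for it is not quite right. The size of the bottom class is \emph{not} driven down by all its boundary edges: from \eqref{Ni2}, interactions between $A_1$ and $A_2$ have zero expected effect on $N_1$ (a vertex of $A_2$ can drop into $A_1$), and the negative drift of $N_1$ comes only from edges between $A_1$ and $\bigcup_{j\ge 3}A_j$. In particular the drift vanishes when only two adjacent classes remain (that is exactly two-value voting, which takes $\Theta(n^2)$ steps), so one must track both extremes jointly --- expansion (property P2 of Lemma~\ref{Props}) guarantees $M_{1,k}$ is large while both $N_1$ and $N_k$ are large, forcing at least one of them down. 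The paper implements this as a gambler's-ruin argument over stages indexed by $\ell$, reducing one extreme to $\delta n$ in $O(n^{3/2})$ expected active steps, and then needs two further phases (II and III) because the residual small classes never fully disappear during peeling and can even acquire a temporarily \emph{positive} drift (e.g.\ $N_{i-1}$ fed by $M_{i-2,i}$), which your $o(n)$-residue remark does not address. Also note that Lemma~\ref{lem:extreme}, which you propose to invoke, is proved only for the synchronous process on $K_n$ and does not transfer to $G_{n,p}$; the $G_{n,p}$ peeling requires the separate expansion-based argument of Section~\ref{Expan}. As a plan your proposal is on the right track, but the parts you defer are exactly the parts that constitute the proof.
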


{The intuitive basis of Theorems~\ref{ThKnSync} and~\ref{ThGnp-intro}, is to prove that the \lq extremal\rq~ values from $\{1,2,...,k\}$ disappear rapidly leaving just two values $i,i+1$ whose weighted average is  $c$, and to which we can apply the results of two-value pull voting. For $K_n$ this is essentially what happens, and for $G_{n,p}$ it is a reasonable approximation.}
We remark that the expected time to complete two-value pull voting on $K_n$ and $G_{n,p}$ is $\Th(n)$ (synchronous), and $\Th(n^2)$ (asynchronous), see \cite{AFill} Chapter 14.3.3 or  \cite{CERO}; {and  the completion time of incremental voting is asymptotically of the same order.}

% \bigskip 

% \noindent
% {\bf Asynchronous incremental voting on the path graph $P_n$.}\;
To complement the above results, for graphs which are not expanders, we give an example on the path graph for which the final answer is quite different than in Theorems~\ref{ThKnSync} and~\ref{ThGnp-intro}.
% not predicted by Theorem \ref{ThA}. 
% The proof is given in Section \ref{Path}.
% 
Let $P_n$ be the path with vertex set $\{1,2,...,n\}$, with initial values $\{0,1,2\}$  ordered on the path vertices in non-decreasing value: first $N_0\ge 0$ zeroes, then $N_1\ge 0$ ones and finally $N_2\ge 0$ twos,
where $N_0 + N_1 + N_2 = n$.
We refer to such an arrangement as the ordered path.

\renewcommand{\TheoremLine}{%
{\sc Asynchronous incremental voting on the ordered path $P_n$. }
\\
If initially $N_0=an, N_1= (1-(a+b))n$, and $N_2=bn$, then
\begin{eqnarray*}
\Pr(\text{Opinion 0 wins})& \sim &a(1-b)\\
\Pr(\text{Opinion 1 wins})& \sim & ab+(1-a)(1-b)\\
\Pr(\text{Opinion 2 wins})& \sim &(1-a)b.
\end{eqnarray*}
}

\begin{theorem}
\label{TheLine-intro} 
\TheoremLine
\end{theorem}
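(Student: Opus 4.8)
\emph{Setup.} Write $p_i=\Pr(\text{opinion }i\text{ wins})$ for $i\in\{0,1,2\}$; the goal is to show $p_0=a(1-b)$, $p_1=ab+(1-a)(1-b)$, $p_2=(1-a)b$. The plan is to reduce the process to a two–dimensional absorbing Markov chain and to locate two martingales which, together with $p_0+p_1+p_2=1$, determine all three probabilities.

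\emph{Step 1: the state is two–dimensional.} First I would verify that the ordered structure is preserved by the dynamics: at every step the configuration is a (possibly empty) block of $0$'s, followed by a block of $1$'s, followed by a block of $2$'s. Indeed a vertex never leaves $\{0,1,2\}$ (a $0$ can only move up, a $2$ only down), and by rule~\eqref{Tab} a new $0$ can appear only at the right end of the $0$-block, a new $2$ only at the left end of the $2$-block, and a new $1$ only at an end of the $1$-block, or as a single vertex at the $0/2$ junction when no $1$ is present. Hence the configuration at time $t$ is captured by $(L(t),R(t))$, where $L$ counts the $0$'s and $R$ counts the $0$'s and $1$'s together; the chain lives on the triangle $\{0\le L\le R\le n\}$ with absorbing corners $(n,n)$ [opinion $0$ wins], $(0,n)$ [opinion $1$ wins], $(0,0)$ [opinion $2$ wins], and it is absorbed almost surely in finite expected time. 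Note that the total weight is $S(t)=\sum_v X_v(t)=2n-L(t)-R(t)$ and that the initial state is $(L(0),R(0))=(N_0,N_0+N_1)=(an,(1-b)n)$.

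\emph{Step 2: two martingales.} By Lemma~\ref{Lemma1} the weight $S(t)$ is a martingale, so $L(t)+R(t)$ is a martingale. The crucial extra fact is that $L(t)R(t)$ is also a martingale for the (asynchronous) edge process. To check this I would split into transition types. Generically — at least two $1$'s present and neither interface at an endpoint of the path — the only active edges are $(L,L+1)$ and $(R,R+1)$, across which $L$ and $R$ independently take lazy symmetric $\pm1$ steps; since both cannot move in one step, $\E[\Delta(LR)]=L\,\E[\Delta R]+R\,\E[\Delta L]+\E[\Delta L\,\Delta R]=0$. The remaining degenerate cases are handled one at a time: the $1$-block has size one ($R=L+1$); the $1$-block is empty ($L=R$, a single active edge separating a $0$ from a $2$, yielding precisely $(L,L)\to(L-1,L)$ and $(L,L)\to(L,L+1)$, each with probability $\tfrac1{2(n-1)}$); and the ``wall'' cases $L=0$ or $R=n$, where the corresponding interface is frozen. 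In each case there are only $O(1)$ possible increments of $LR$, and a direct computation shows they have zero conditional mean — in the merged state $(L,L)$ this is just the identity $(L-1)L+L(L+1)=2L^2$.

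\emph{Step 3: optional stopping and conclusion.} Both $L+R$ and $LR$ are bounded and the absorption time has finite mean, so optional stopping gives that the expectation of each martingale at absorption equals its initial value. Evaluating at the three corners yields $n^2p_0=L(0)R(0)$ and $n(2p_0+p_1)=L(0)+R(0)$, and with $p_0+p_1+p_2=1$ this gives $p_0=\tfrac{L(0)}{n}\tfrac{R(0)}{n}$, $p_1=\tfrac{L(0)}{n}\big(1-\tfrac{R(0)}{n}\big)+\big(1-\tfrac{L(0)}{n}\big)\tfrac{R(0)}{n}$, and $p_2=\big(1-\tfrac{L(0)}{n}\big)\big(1-\tfrac{R(0)}{n}\big)$. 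Substituting $L(0)=an$ and $R(0)=(1-b)n$ produces the three stated values. (For the edge process these equalities are exact up to the integer rounding of $an,(1-a-b)n,bn$; for the vertex process the same asymptotics hold, since the two martingale identities are perturbed only at the $O(1)$ configurations in which an interface lies within distance $1$ of a path endpoint, and this shifts each $p_i$ by only $o(1)$.)

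\emph{Main obstacle.} The hard part will be Step 2: checking that $L(t)R(t)$ stays a martingale through the degenerate configurations (interfaces adjacent or merged) and at the two ends of the path. Away from these $L$ and $R$ are independent symmetric walks and $LR$ is trivially a martingale; the real content is that their interaction with each other, and with the reflecting ends of the path, happens to preserve this property. It is a short finite case analysis, but it is the only non-routine step — Steps 1 and 3 being standard.
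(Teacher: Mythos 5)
Your proof is correct, but it takes a genuinely different route from the paper's. You encode the configuration by $(L,R)=(N_0,\,N_0+N_1)$ and win by exhibiting \emph{two} bounded martingales, $L+R$ (equivalent to the weight martingale of Lemma~\ref{Lemma1}) and the product $LR$, then reading off $p_0,p_1,p_2$ from optional stopping at the three absorbing corners; the only real work is your finite case check that $LR$ stays a martingale at the degenerate configurations (empty $1$-block, singleton $1$-block, frozen walls), and your computation $(L-1)L+L(L+1)=2L^2$ at the merged state is exactly the identity that makes this go through. The paper instead works with $(i,j)=(N_0,N_2)$ on the triangle $i+j\le n$, unfolds the triangle into the full square by identifying $(i,j)$ with $(n-j,n-i)$, and observes that on the square the two coordinates are driven by two \emph{independent} one-dimensional unbiased walks, so $\Pr(0\text{ wins})$ factors as the product of two gambler's-ruin probabilities. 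The two arguments are close in spirit — the unfolding is the paper's way of handling the diagonal, where your direct verification of the martingale property does the same job — but yours is more mechanical and self-contained (no geometric identification needed), while the paper's gives a cleaner structural picture (two genuinely independent interfaces) from which the full absorption distribution and expected durations also fall out. One caveat: your parenthetical claim that the vertex process differs only by an $o(1)$ perturbation coming from interfaces near the path endpoints is asserted rather than proved (the walk could a priori spend substantial time near a wall); this is not load-bearing, since the paper's proof, like your main argument, treats the edge process, but if you want to keep that remark you should either justify it or drop it.
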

%There is no obvious equivalence between the asynchronous and synchronous processes.
%This can be seen from the  example of the ordered path.  
%In the synchronous case;  vertex values can get out of order on the path; this is impossible in the asynchronous %case.

{\bf An example for comparison. }\;\; 
We consider  values in $\{0,1,2\}$, where  initially $1/5$ of the values are zero, none are one, and $4/5$ are two. Thus $c=8/5$ and $1 < c <2$. \\
In $K_n$ and $G_{n,p}$, 
\[ \Pr(0 \text{ wins}) \sim 0, \qquad \Pr(1 \text{ wins}) \sim 2/5, \qquad \Pr(2 \text{ wins}) \sim 3/5,
\]
whereas on the ordered path
\[ \Pr(0 \text{ wins}) \sim 1/25, \qquad \Pr(1 \text{ wins}) \sim 8/25, \qquad \Pr(2 \text{ wins}) \sim 16/25.
\]

% {\purple I'd say the example above is enough.} 
% {\red Maybe the following is a better example?} {\blue Consider a path consisting of a single edge $(u,v)$ with  opinions $(0,2)$. After one step the opinions are $(0,1)$ or $(1,2)$ with probability $(1/2)$. Applying \eqref{probw} at this point gives the distribution of final opinion as $(1/4, 1/2, 1/4)$ for $(0,1,2)$ respectively.}

% {\red {\bf Incremental voting in the population protocol model. } This paragraph was moved to last section: Remarks, Further work etc}

\section{Basic properties of  incremental voting }\label{Sec2}

{Let $X(t)=(X_v(t): v\in V)$ be the vector of integer opinions held by the vertices at step~$t$,
where $X(0)$ is the vector of initial opinions.
We use the notation $A_i(t) = \{v\in V: X_v(t) =i\}$ for the set of vertices holding opinion $i\in \{1,...,k\}$ at time~$t$,
and let $N_i(t)=|A_i(t)|$. 
For notational convenience, we may abbreviate by dropping the step index $t$, for example, 
$N_i$ and $N'_i$ would refer to the number of vertices holding opinion $i$ at the beginning and at the end, 
respectively, of the current step.
% OK:
%{\red Changed $V_i$ to $A_i$ throughout. Its OK to keep $N_i(t)$ and $|A_i(t)|$. They occur in different contexts}
Let $S(t)$ be the total weight  at step $t \ge 0$: 
$S(t)=\sum_{v\in V} X_v(t)=\sum_j j N_j(t)$. 
The average of the initial opinions $c=S(0)/n$.}  
Let $\pi_v=d(v)/2m$ where $m$ is the number of edges of the graph,
and let $Z(t)=n \sum_{v\in V} \pi_v X_v(t)$ be the degree biased weight. 
{We note that for regular graphs, $\pi_v=1/n$, in which case $S(t)=Z(t)$.}
We also use notation
$\|\pi\|_2 = \sqrt{\sum_i \pi^2}$ and $\|\pi\|_\infty = \max_{v\in V}\pi_{v}$.

{A random variable $W(t), t=0,1,...$  of the incremental voting process is a martingale if its expected value at the next step depends only on the current opinions $X(t)$, and it satisfies $\E (W(t+1) \mid X(t)) = W(t)$. 
% The next lemma gives conditions for which the  weight $S(t)$ is a martingale in the edge process, and the degree biased weight  $Z(t)$ is a martingale in the vertex process.
}

\begin{lemma}\label{Lemma1}\label{SKn}\label{Lemma2}{\sc The average weight is a martingale.}\;\;
The following hold for each $t\ge 0$.

% \vspace{-0.25in}
\begin{enumerate}[(i)]
%\item {\bf Vertex process.} Let $G$ be a $d$--regular graph, then $Y(t)$ is a martingale.
\item
{\bf Asynchronous edge process.}   For arbitrary graphs, $S(t)$ is a martingale.
\item
{\bf Asynchronous vertex process. }  For arbitrary graphs, $Z(t)$ is a martingale.
\item {\bf Synchronous vertex process. }  For {arbitrary} graphs,  
% $S(t)$ 
{$Z(t)$} is a martingale.
\end{enumerate}
\end{lemma}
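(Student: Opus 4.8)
The plan is to verify the martingale condition $\E(W(t+1)\mid X(t))=W(t)$ directly for each of the three processes by computing the expected one-step change $\E(W(t+1)-W(t)\mid X(t))$ and showing it vanishes. The common mechanism is that each update changes a single vertex's opinion by $\pm1$ according to the sign of $X_w-X_v$, and the contributions from the two directions of an edge cancel. I would set up the computation edgewise: for each edge $\{u,v\}$, consider the two ordered interactions $(v\text{ updates towards }u)$ and $(u\text{ updates towards }v)$. If $X_u>X_v$, the first moves weight by $+1$, the second by $-1$; if $X_u<X_v$, it is $-1$ and $+1$; if $X_u=X_v$, both are $0$. So the edge's net expected contribution to the change in total weight is $0$ in every case, provided the two ordered interactions are equally likely.

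For part (i), the asynchronous edge process, a random endpoint $v$ of a uniformly random edge $e$ is picked, so the ordered pair $(v,w)$ and $(w,v)$ each have probability $\frac{1}{2m}$; summing $\E(S(t+1)-S(t)\mid X(t))$ over all ordered incident pairs and grouping by edge gives $0$ by the cancellation above. For part (ii), the asynchronous vertex process, $v$ is chosen uniformly and then a uniformly random incident edge $(v,w)$, so the ordered pair $(v,w)$ is picked with probability $\frac1n\cdot\frac1{d(v)}$. The change in $Z(t)=n\sum_v\pi_v X_v$ contributed by updating $v$ is $n\pi_v\cdot(\pm1)=\frac{d(v)}{2m}(\pm1)$, and the probability weight $\frac1{nd(v)}$ times $n\pi_v=\frac{d(v)}{2m}$ is $\frac1{2md(v)}\cdot d(v)\cdot\frac1{?}$— more cleanly: the expected change is $\sum_{(v,w)\in\vec E}\frac1{nd(v)}\cdot n\pi_v\cdot\mathrm{sgn}(X_w-X_v)=\frac1{2m}\sum_{(v,w)\in\vec E}\mathrm{sgn}(X_w-X_v)$, and this last sum is $0$ because each unordered edge contributes $\mathrm{sgn}(X_w-X_v)+\mathrm{sgn}(X_v-X_w)=0$. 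This is the key point: the degree weighting $\pi_v$ in $Z$ is exactly what converts the non-uniform vertex-selection probability into a symmetric sum over oriented edges.

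For part (iii), the synchronous vertex process, every vertex simultaneously picks a random incident edge and updates. Linearity of expectation still applies to $Z(t+1)-Z(t)=\sum_v n\pi_v(X_v'-X_v)$: the expected change is $\sum_v n\pi_v\,\E(X_v'-X_v\mid X(t))$, and $\E(X_v'-X_v\mid X(t))=\frac1{d(v)}\sum_{w\sim v}\mathrm{sgn}(X_w-X_v)$. Multiplying by $n\pi_v=\frac{d(v)}{2m}$ and summing over $v$ again yields $\frac1{2m}\sum_{(v,w)\in\vec E}\mathrm{sgn}(X_w-X_v)=0$. The fact that updates happen in parallel and may interact is irrelevant, because each vertex's new opinion depends only on its own chosen neighbour's \emph{old} opinion (the neighbour does not change its opinion at the interaction), so $\E(X_v'-X_v\mid X(t))$ is computed coordinatewise from $X(t)$ alone.

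I do not expect a serious obstacle here; the only things to be careful about are bookkeeping of the selection probabilities (uniform endpoint of a uniform edge versus uniform vertex then uniform edge) and making explicit that in the synchronous case the update rule reads the pre-step configuration, so the coordinatewise expectations are legitimate and linearity of expectation closes the argument. The ``hard part,'' such as it is, is simply stating cleanly the oriented-edge cancellation $\mathrm{sgn}(X_w-X_v)+\mathrm{sgn}(X_v-X_w)=0$ and checking that the degree weighting $n\pi_v=d(v)/2m$ cancels the $1/d(v)$ from the neighbour-choice step in the vertex processes, which is why $Z$ rather than $S$ is the martingale there.
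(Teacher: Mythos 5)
Your proposal is correct and follows essentially the same route as the paper: compute the expected one-step change of $S$ (resp.\ $Z$) and observe that the degree weighting $n\pi_v=d(v)/2m$ cancels the $1/d(v)$ from neighbour selection, reducing everything to a sum over oriented edges that vanishes by the antisymmetry $\D_v(w)=-\D_w(v)$. The paper bookkeeps this cancellation by grouping edges between opinion classes $A_i,A_j$ and double-counting, while you sum $\mathrm{sgn}(X_w-X_v)$ over oriented edges directly, but these are the same argument.
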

\begin{proof}
{\em Proof of (i).} 
Consider step $t+1$, take any edge $(v,w)$, and let $\D_v(w)$ be the change in $X_v$ 
if this edge and its endpoint $v$ are chosen in step $t+1$. 
Thus $\D_v(w) \in \{-1,0,+1\}$ and, (see  \eqref{Tab}), $\D_v(w)=-\D_w(v)$. 
Only one of these changes can occur at a given step in the asynchronous process.
Let $ e=(v,w)$ be the chosen edge, an event of probability $1/m$ in the edge process. 
Then,
\[
\E(S(t+1) \mid X(t), e=(v,w) \text{ chosen})= S(t)+ \frac12 \D_v(w) +\frac12 \D_w(v) =S(t).
\]
{\em Proof of (ii).}
Let $A_i(t)$ be the vertices with value $i$ at step $t$. 
For a vertex $u \in A_i$, let $s_{ij}(u)$ be the number of edges from $u$ to $A_j$. Each edge has two ends, so {adding the edges between $A_i$ and $A_j$ in two ways,}
\begin{equation}\label{NiNj}
\sum_{u \in A_i} s_{ij}(u)=\sum_{v \in A_j} s_{ji}(v).
\end{equation}
For $1 \le i < j \le k$, let $\D_{ij}$ be the change in $Z(t)$ at step $t+1$ arising from an interaction 
between $A_i$ and $A_j$ (a vertex with opinion $i$ picks up a neighbour with opinion $j$, or vice versa).
We have $Z(t+1) = Z(t) + \sum_{i<j} \D_{ij}$.
{In the vertex process, we first pick a vertex $u$ u.a.r.\ and then an edge $(u,v)$ from $u$ u.a.r.. If $u \in A_i$ is the sampled vertex, the probability an edge from $u$ to $A_j$ is chosen is $s_{ij}(u)/d(u)$. 
As $\pi_u=d(u)/2m$, and then using~\eqref{NiNj},}
\begin{equation}\label{expected-change}
\E \D_{ij}=\; \sum_{u \in A_i} \frac 1n \frac{s_{ij}(u)}{d(u)} \pi_u
- \sum_{v \in A_j} \frac 1n \frac{s_{ji}(v)}{d(v)} \pi_v  \; = \;
 \frac 1{2nm} \brac{\sum_{u \in A_i} s_{ij}(u)-\sum_{v \in A_j} s_{ji}(v)}\;=\;0.
\end{equation}
{\em Proof of (iii).}
{In the synchronous vertex process,
with the notation as above in {\it (ii)},
the expected value of $\D_{ij}$ is as
% the RHS of
in~\eqref{expected-change} but 
without the $1/n$ factors (since each vertex selects a neighbour and updates 
its value).} 
% {\blue The proof is similar to (ii). Suppose $G$ is $r$-regular. The process is synchronous, and every vertex $u \in A_i$ contacts a random neighbour.  At $u$, the probability an edge from $u$ to $A_j$ is chosen is $s_{ij}(u)/r$. As $\pi_u=d(u)/2m=1/n$,
% \[
% \E \D_{ij}=\; \sum_{u \in A_i} \frac 1n \frac{s_{ij}(u)}{r} 
% - \sum_{v \in A_j} \frac 1n \frac{s_{ji}(v)}{r}  \quad = \quad
%  \frac 1{rn} \brac{\sum_{u \in A_i} s_{ij}(u)-\sum_{v \in A_j} s_{ji}(v)}\;\;=\;0.
% \]
% }
\ignore{
 Let $N_i$ be the number of vertices with opinion $i$. For brevity, we give a proof for $K_n$.
\begin{align*}
    \E\left[S(t+1)-S(t)\mid X(t)\right]
    &=\sum_{v\in V}\E\left[X_v(t+1)-X_v(t)\mid X(t)\right]\\
    &=\sum_{v\in V}\left(\sum_{i=X_v(t)+1}^{k}\frac{N_i}{n}-\sum_{i=1}^{X_v(t)-1}\frac{N_i}{n}\right)\\
    &=\frac 1n \sum_{j=1}^k N_j \brac{\sum_{i=j+1}^k N_i-\sum_{i=1}^{j-1} N_i}=0.
\end{align*}
}
\end{proof}

We note that Theorem~\ref{SKn} applies to arbitrary graphs, not necessarily connected. The martingales $S(t)$ and $Z(t)$ converge for arbitrary graphs, as noted in the next theorem, but if the graph is not connected, then each component converges to its own value.
We assume that the graph is connected,
when we analyse convergence to one value.

As the process is randomized, the final value (if the graph is connected) is a  {random variable with} distribution $D(i)$ on the initial values $\{1,...,k\}$, 
where $D(j)=\Pr(j \text{ wins})$.
The following theorem helps us to characterize this distribution in certain cases. 
If only two neighbouring opinions $i, i+1$ remain at some step $t$, the process is equivalent to two-value pull voting, and we say the incremental voting is at the final stage. 

% {\red [Check if this is sufficiently accurate:] For$K_n$, we prove  the final stage is reached w.h.p.. For $G_{n,p}$ we  relax this definition to allow for $o(n)$ other values, but prove that w.h.p. this does not affect the statement of Theorem \ref{ThA}(ii)-(iii).}}

% {\red At this point we need to say the graph is connected, else we cant claim there is a winning value?
% The previous theorem is OK for arbitrary graphs but not  this one?}

% {\teal (i) it does not use connectivity, (ii) it requires connectivity, and (iii) it uses connectivity implicitly (suppose the final stage is reached...)?}

% {\blue TR: It seems Takeharu refers to the previous theorem and Colin to the next one}

\begin{theorem}\label{ThA}{\sc Distribution of winning value.}\;\;
Let $W(t)=S(t)$ when referring to
the edge model, and let $W(t)=Z(t)$, when referring to the vertex model. 
Let {$W(0)=cn$} be the total initial weight, 
% where $1 \le i \le c \le i+1\le k$, 
where $n$ is the number of vertices in the graph and $c$ is the initial average opinion.
% \vspace{-0.25in}
\begin{enumerate}[(i)]
\item { 
% Let $W(t)$ denote $S(t)$ or $Z(t)$ as appropriate. 
For an arbitrary graph, 
the expected average opinion at any step is always the initial average: $\E[W(t)/n] = W(0)/n = c$. 
% For finite $k$ 
The process $W(t)$ converges to a time invariant random variable.}
\item
For a connected graph,
if at the start of the final stage only two opinions $i$ and $i+1$ remain and the total weight $W$ is {$c'n$}, 
then for any connected graph,
% {in either vertex or edge process}, 
% and for general connected graphs in the asynchronous edge process, 
the winning opinion is $i$ with probability $p=i+1-c'$, or $i+1$
with probability $q=c'-i$.
\item For a connected graph, 
suppose the final stage is reached in $T$ steps, 
where $T = o(1/\| \pi\|^2_2)$ for the synchronous vertex process (which reduces to $T=o(n)$ for regular graphs),
$T= o(n^2)$ for the asynchronous edge process,
and $T = o(1/\| \pi\|^2_\infty)$ for the asynchronous vertex process.
% where $T=o(n)$ (synchronous process) or $T=o(n^2)$ (asynchronous process).
Then w.h.p. $W(T) \sim cn$ and the results of part (ii) hold with $c' \sim c$.
That is, for $i$ such that $i\le c < i+1$, 
the winning opinion is $i$ with probability $p\sim i+1-c$, and is $i+1$
with probability $q\sim c-i$.
\end{enumerate}
\end{theorem}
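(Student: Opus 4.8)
The plan is to dispatch parts (i) and (ii) quickly from Lemma~\ref{Lemma1} and to concentrate the effort on part (iii), which is a martingale‑concentration statement. For part (i): since $W(t)$ (namely $S(t)$ in the edge model, $Z(t)$ in the vertex models) is a martingale, $\E[W(t)]=\E[W(0)]=cn$ for all $t$, so $\E[W(t)/n]=c$. All opinions stay in $\{1,\dots,k\}$, so $W(t)$ is a bounded martingale and converges almost surely; moreover the process is a finite Markov chain whose only absorbing states are the consensus states, and from every non‑consensus configuration some finite sequence of legal moves reaches consensus — repeatedly pick an edge $(u,w)$ with $X_u<X_w$ (one exists by connectivity as long as the configuration is not consensus) and increment $u$; this strictly decreases $\sum_v(\max_x X_x - X_v)$ and never raises the maximum, so it terminates at consensus and in fact passes through a configuration with exactly two adjacent opinions. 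Hence $W(t)$ is eventually constant a.s., its limit $W_\infty$ is time invariant, and $\E[W_\infty]=cn$ by bounded convergence.

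For part (ii): once only $\{i,i+1\}$ are present, rule~\eqref{Tab} coincides with two‑value pull voting and the present‑opinion set stays $\{i,i+1\}$, so $W_\infty\in\{in,(i+1)n\}$ with $\Pr(W_\infty=in)=\Pr(i\text{ wins})$ and $\Pr(i\text{ wins})+\Pr(i+1\text{ wins})=1$ (consensus a.s., as above). Taking expectations in the limit and using the martingale property from the first step of the final stage gives $in\,\Pr(i\text{ wins})+(i+1)n\,\Pr(i+1\text{ wins})=c'n$; solving this $2\times2$ system yields $\Pr(i\text{ wins})=i+1-c'$ and $\Pr(i+1\text{ wins})=c'-i$. (Alternatively one may quote~\eqref{probw} and note $c'n=S=in+N_{i+1}$ in the edge model and $c'n=Z=in+n\,d(A_{i+1})/2m$ in the vertex model.)

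For part (iii) the strategy is an $L^2$ bound on the martingale $W(t)$ run up to the stopping time $T$ at which the final stage begins. First, control one step: the conditional second moment $\E[(W(t+1)-W(t))^2\mid X(t)]$ is at most $1$ in the edge model (the increment is in $\{-1,0,1\}$); at most $n^2\|\pi\|_\infty^2$ in the asynchronous vertex model (the increment is $n\pi_v\D_v$ for the single chosen vertex $v$, $\D_v\in\{-1,0,1\}$); and at most $n^2\|\pi\|_2^2$ in the synchronous vertex model — here $W(t+1)-W(t)=n\sum_v\pi_v\D_v$ with $\D_v\in\{-1,0,1\}$, the $\D_v$ \emph{independent} given $X(t)$ because vertices sample neighbours independently, and mean zero by Lemma~\ref{Lemma1}(iii), so the conditional second moment equals $n^2\sum_v\pi_v^2\,\V(\D_v\mid X(t))\le n^2\|\pi\|_2^2$. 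Second, sum over steps using orthogonality of martingale increments: if $T^*=o(\cdot)$ is a deterministic bound with $T\le T^*$ w.h.p.\ (supplied by the earlier lemmas on the disappearance of extreme opinions), then $\E[(W(T^*)-cn)^2]\le T^*\sigma^2$, with $\sigma^2$ the per‑step bound above. Third, apply Doob's $L^2$ maximal inequality together with Markov: $\Pr(\max_{t\le T^*}|W(t)-cn|\ge\om\sqrt{T^*}\,\sigma)=O(1/\om^2)$. The three hypotheses $T^*=o(n^2)$, $T^*=o(1/\|\pi\|_\infty^2)$, $T^*=o(1/\|\pi\|_2^2)$ are precisely what makes $\sqrt{T^*}\,\sigma=o(n)$ in the edge, asynchronous‑vertex and synchronous‑vertex models respectively; choosing $\om\rai$ slowly enough that $\om\sqrt{T^*}\,\sigma=o(n)$, we conclude that w.h.p.\ $|W(T)-cn|=o(n)$, i.e.\ $c':=W(T)/n\sim c$. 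Feeding this into part (ii): since $i\le c<i+1$, when $c$ is bounded away from the integers the final pair is $\{i,i+1\}$ w.h.p.\ and $\Pr(i\text{ wins})=i+1-c'\sim i+1-c$; when $c$ tends to an integer the final pair may instead be $\{i-1,i\}$, but then $\Pr(i\text{ wins})=c'-(i-1)\to1$, consistent with the statement.

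The main obstacle is the synchronous vertex process, where a single round can shift $W$ by as much as $n=n\sum_v\pi_v$, so Azuma/Hoeffding bounds based on the increment \emph{range} are useless; the point is to use instead the per‑round conditional \emph{variance}, which is only $n^2\|\pi\|_2^2$ because the individual vertex updates in a round are independent, and to run a second‑moment (Doob plus Chebyshev) concentration rather than an exponential one. A secondary technical wrinkle is that $T$ is random; this is handled by replacing it with a deterministic high‑probability upper bound $T^*$ and bounding the whole trajectory through the maximal inequality, so no optional‑stopping bookkeeping is needed.
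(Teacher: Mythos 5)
Your proposal is correct and follows the paper's decomposition exactly: part (i) from the martingale property of Lemma~\ref{Lemma1} together with bounded-martingale convergence; part (ii) from the two-value pull-voting probabilities~\eqref{probw} combined with the weight identity $ipn+(i+1)qn=c'n$; and part (iii) from martingale concentration over the $T$ steps preceding the final stage, with the three hypotheses on $T$ matched to per-step bounds of $1$, $n\|\pi\|_\infty$ and $n^2\|\pi\|_2^2$ for the edge, asynchronous-vertex and synchronous-vertex processes respectively. The only genuine difference is the concentration tool in (iii): the paper applies Azuma--Hoeffding to the two asynchronous processes and, for the synchronous process, the moment-generating-function bound of Lemma~\ref{lem:weighted_sum_concentration}, which factorises over the independent per-vertex updates within a round to obtain a sub-Gaussian parameter $n^2\|\pi\|_2^2$; you instead bound the per-step conditional variance by the same quantities and use orthogonality of increments plus Doob's $L^2$ maximal inequality and Chebyshev. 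Both routes hinge on the identical key observation --- that for a synchronous round one must exploit independence of the vertex updates to replace the useless range bound $n\sum_v\pi_v=n$ --- and both deliver the required w.h.p.\ deviation $o(n)$; yours gives a polynomial rather than exponential tail, which is harmless here and slightly more elementary. One small caveat: your auxiliary claim in (i) that consensus is reached a.s.\ by ``picking an edge $(u,w)$ with $X_u<X_w$ and incrementing $u$'' applies only to the asynchronous processes; in the synchronous process all vertices move simultaneously, and on bipartite graphs adjacent-value configurations can oscillate forever (the paper notes this for two-value synchronous voting in its concluding remarks). Fortunately the convergence of $W(t)$ itself needs only the martingale convergence theorem, which you also invoke, so the conclusion of (i) is unaffected.
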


\begin{proof}
{\bf (i)}
{The first part follows from $\E W(t)=W(0)$ (Lemma \ref{Lemma1}).  
% For bounded $k$, 
$\E W^2(t) \le k^2$ and the limit random variable, for $t\rai$, exists by the martingale convergence theorem.}
% {\purple This isn't clear to me. For any initial configuration -- the graph and placement of initial opinions
% -- $k$ is some value and $W(t)$ converges. Not clear to me how $k$ could be unbounded.} {\red See remark below theorem?}

\noindent
{\bf (ii)}
% At the final stage when only two opinions $i$ and $i+1$ are left, then as  $W(t)=c'n$,
% it must be that $i \le c' \le i+1$. 
% Denoting by $p$ the probability that opinion $i$ wins and by $q=1-p$ the probabilities 
% that opinion $i+1$ wins, and 
Using~\eqref{probw}, we have $ipn + (i+1)qn = W$, 
implying that $p=i+1-c'$ and $q=c'-i$.
% According to \eqref{probw}, the probability $i$ wins is {\blue $p=N_i/n$, and by definition the average is $i N_i/n+ (i+1) N_{i+1}/n=c'$.
% Thus,  opinion  $i$ wins with probability $p=i+1-c'$, (and opinion $i+1$
% with probability $q=c'-i$), as this is the only solution to the above two equations.}

\ignore{
Let $x_1, x_2, ... , x_n$ be the opinions of the nodes.
Each $x_i$ is in $\{1,2,...,k\}$, where $k$ can be large,
but constant (as $n$ increases to infinity).

Let's look only at the edge process.
In one step, a random edge $(i,j)$ is chosen and then
either $ x_i + x_j$ doesn't change (if $x_i = x_j$),
or it changes by $+1$ or $-1$ with equal probability.
Thus, after one step,
\[
\E(x_1' + x_2' + ... + x_n') = x_1 + x_2 + ... + x_n,
\]
so the expectation of the sum of all votes
(as well as the expectation of the average of all votes)
is an invariant, say $S = cn$. That is, $c$ is the initial average of
the opinions, and the expected average of the opinions at each step.

Let $1 <= i < c < i+1 <= k$.

(W.h.p, the opinions disappear in some fixed order -
assuming that we don't hit some algebraic singularities as
in configuration $(a_1, a_2, a_3)$,
where $a_1 = a_3$ and opinions 1 and 3 disappear "simultaneously".)
}

\noindent
{\bf (iii)} {In the synchronous vertex process, 
using Lemmas~\ref{lem:sum_deviation} and \ref{lem:weighted_sum_concentration},
we have $|W(T) - W(0)| = o(n)$ w.h.p., provided $T = o(1/\| \pi\|^2_2)$, which reduces to $T=o(n)$ for regular graphs.
In the asynchronous edge process,
$|S(t+1)-S(t)| \le 1$,
and in the asynchronous vertex process, 
$|Z(t+1)-Z(t)| \le n\max_{v\in V}\pi_{v}=n \|\pi\|_\infty$,
so using the Azuma-Hoeffding inequality for martingale concentration 
(Lemma~\ref{lem:Azuma-Hoeffding}),
we obtain 
$|W(T)- W(0)|\leq o(n)$
%$|W(T)- W(0)| {\red \le ???}{\teal o(n)}$
w.h.p.,
provided $T= o(n^2)$, respectively $T = o(1/\| \pi\|^2_\infty)$.
%
% (resp. synchronous case) case the value of $S$ 
% satisfies $|S(t+1)-S(t)| \le 1$ (resp.$|S(t+1)-S(t)| \le n$). Let $\om \rai$ be such that $T=n^2/\om^2$ in the asynchronous case (resp. $T=n/\om^2$, synchronous case).
% That $S(T) \sim S(0)$ w.h.p.  follows from   the Azuma-Hoeffding inequality  for martingale concentration (Lemma \ref{lem:Azuma-Hoeffding}). Put $X_0=cn$ and $\e=n/\sqrt{\om}$ to give $X(T)=X(0)(1+O(1/\sqrt{\om}))$ with probability $1-O(e^{-\om/2})$.
}
\end{proof}

In the light of Theorem \ref{ThA},  there are two main ways to analyse the problem.
For the %(regular) 
expander graphs we consider, the final stage of pull voting with two values $i$ and $i+1$, 
takes $T=O(n)$ expected time in the synchronous process, or $T=O(n^2)$ expected time in the asynchronous process; 
{see e.g. \cite{AFill, CERO}}.
If the time $t$ for the other opinions to disappear, leaving only two neighbouring values, can be shown to be $t=o(T)$ then the total weight $S(t)$ is concentrated around $S(0)=cn$. 
In this case we can use Theorem \ref{ThA} to give an asymptotic result on the distribution $D$. 
There are however challenges in the analysis, e.g., 
dealing with the fact that 
a small number of opinions other than $i$ or $i+1$ 
may persist in the system for a longer time. This is the topic of \cref{Expan,KnSync}.
On graphs which are not expanders we cannot expect $S(t)$ (or $Z(t))$ to be concentrated, 
but we can try to explicitly obtain {the  distribution  of the limiting random variable,} 
at least for some special cases. This is the topic of 
Section~\ref{Path}.

{Appealing to Lemma \ref{T2} below, incremental voting on connected graphs finishes in polynomial time w.h.p..
In which case, in the limit, the winning value of an incremental voting process is given by a probability distribution $D$ on
$\{1,...,k\}$.}
As the total weight $W(t)$ is a martingale, the distribution $D$ must have expected value $c = W(0)/n$. Indeed,
for $Z(t)$ in the vertex process  (and similarly for $S(t)$ in the edge process),
\begin{equation}\label{Probs}
c=\frac{Z(0)}{n}
=\lim_{t \rightarrow \infty}\frac{\E Z(t)}{n}
% =\lim_{t \rai} \E\left( \sum_{v} \pi_v X_v(t) \right)
= \sum_v \pi_v \lim_{t \rai} \E X_v(t)
= \sum_v \pi_v \E D
= \E D.
\end{equation}

%The final distribution of standard pull voting which fits Lemma %\ref{Lemma1}(iii). The final distribution of winning opinions is %$D(A)=d(A)/2m$ for all opinions $A \in S$.

\begin{lemma}\label{T2} {\sc Completion time, a general bound.}
% \;\;{\red Please see Review 70A comments for revision of this lemma.}
For  any  
connected graph, and any of the three types of incremental voting (asynchronous or synchronous vertex process, or asynchronous edge  process), the worst-case 
expected time to eliminate one of the two
extreme opinions (over all initial configurations) is at most the worst-case
expected completion time of standard asynchronous 2-opinion
voting.
\end{lemma}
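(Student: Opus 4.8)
The plan is to couple the incremental voting process with a standard two-opinion pull voting process in such a way that the elimination of one of the two extreme opinions in the former corresponds exactly to reaching consensus in the latter. Fix an initial configuration $X(0)$ with opinions in $\{1,\dots,k\}$, and let $j_{\min}$ and $j_{\max}$ be the smallest and largest opinions present. The key observation is that whether a vertex currently holds opinion $j_{\min}$ (the smallest) depends only on a \emph{binary} distinction: ``is this vertex at the bottom of the current range or not?'' I would make this precise by defining an auxiliary two-valued process $Y(t)$ on the same graph, driven by the \emph{same} randomness (same choice of vertex/edge at each step), where $Y_v(t) = 0$ if $X_v(t) = j_{\min}$ and $Y_v(t) = 1$ otherwise, and similarly a process tracking $j_{\max}$.

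The main step is to verify that, under the update rule \eqref{Tab}, the indicator ``$X_v = j_{\min}$'' evolves \emph{monotonically consistently} with two-opinion pull voting, at least until $j_{\min}$ disappears. Concretely: a vertex $v$ currently at value $j_{\min}$ moves off $j_{\min}$ only when it picks a neighbour $w$ with $X_w > X_v = j_{\min}$, i.e. a neighbour not at the bottom --- this is exactly the pull-voting transition $0 \to 1$ when $v$'s sampled neighbour has opinion $1$. Conversely, a vertex at value $j_{\min}+1$ can drop down to $j_{\min}$ when it samples a neighbour at $j_{\min}$ --- exactly the transition $1 \to 0$. The subtlety is that a vertex at value $\ge j_{\min}+2$ never transitions directly to $j_{\min}$ in one step, so the ``$0$''-set in $Y$ can only grow via its current boundary, just as in pull voting; and a vertex at $j_{\min}+1$ that samples a neighbour at $j_{\min}+1$ or higher does not move to $j_{\min}$, matching the pull-voting rule. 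I would argue that this gives a coupling in which the $0$-set of the genuine two-opinion pull voting process started from $Y(0)$ is, at every step, a subset of (or otherwise dominates, in the relevant direction) the set $A_{j_{\min}}(t)$, so that the time for $A_{j_{\min}}(t)$ to become empty is stochastically dominated by --- or at any rate bounded by --- the consensus time of two-opinion voting. One then takes the worst case over initial configurations, and notes the same argument applies symmetrically to $j_{\max}$.

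The main obstacle I anticipate is that the correspondence is not a perfect bijection of transitions: in incremental voting a vertex at $j_{\min}+1$ that samples a neighbour at a \emph{large} value moves \emph{up} to $j_{\min}+2$, which has no counterpart in the two-opinion process (where that vertex would simply stay at $1$). This means $Y(t)$ is not literally a pull-voting trajectory --- some ``$1 \to 1$'' steps in $Y$ correspond to internal reshuffling among the non-minimal opinions. The resolution is that such steps never \emph{decrease} the $1$-set, so they can only delay, never accelerate, the disappearance of $j_{\min}$ relative to the worst case; equivalently, one compares against the two-opinion process in which every internal step is a lazy (no-op) step, and laziness only inflates the expected absorption time, which is absorbed into ``worst-case expected completion time.'' Care is also needed because the identity of $j_{\min}$ can change mid-stage if it disappears and a larger value becomes the new minimum --- but by definition the lemma only asks about eliminating \emph{one} of the two extreme opinions, i.e. the \emph{first} such disappearance, so we only need the coupling to hold up to that stopping time, before which $j_{\min}$ is fixed. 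Assembling these observations, together with the standard fact (Lemma~\ref{Lemma1} is not needed here, only the classical pull-voting consensus bound) that the asynchronous vertex and synchronous vertex variants are themselves dominated by or comparable to asynchronous edge voting on the same graph, yields the stated bound for all three variants.
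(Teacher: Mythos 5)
Your starting point is the right one --- couple the incremental process with a genuine 2-opinion pull voting process driven by the same randomness, and observe that $A_{j_{\min}}(t)$ is contained in the $0$-set $B(t)$ of the coupled process (your hedging about the direction should be resolved this way: $B(t)\supseteq A_{j_{\min}}(t)$, precisely because a vertex at level $\ge j_{\min}+2$ that samples a $j_{\min}$-neighbour joins $B$ but not $A_{j_{\min}}$; this suppressed $1\to 0$ transition, not the upward move at level $j_{\min}+1$ that you flag as the main obstacle, is the real discrepancy, and it is why you get containment rather than equality). But the conclusion you draw from this one-sided containment --- that the time for $A_{j_{\min}}$ to become empty is bounded by the consensus time of the 2-opinion process --- is false. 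The coupled process $B$ reaches consensus either at $\emptyset$ or at $V$. Only in the first case does the containment give you $A_{j_{\min}}=\emptyset$; in the second case you learn nothing, and indeed the minimum opinion may never be eliminated (it may be the eventual winner), so its expected elimination time can be infinite. Handling $j_{\max}$ ``symmetrically'' with a second, separate coupling does not repair this: you would then need to rule out the possibility that both couplings resolve in the uninformative direction, and two independent one-sided containments give you no way to do so.

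The missing idea --- and the crux of the paper's proof --- is that a \emph{single} coupled 2-opinion process does double duty: one proves simultaneously that $A_\ell(t)\subseteq B(t)$ and $A_r(t)\subseteq V\setminus B(t)$, where $\ell$ and $r$ are the minimum and maximum opinions. Then whichever consensus $B$ reaches, one of the two containments forces one of the two extreme opinions to be empty, which is exactly the disjunctive claim of the lemma (``eliminate one of the two extreme opinions''). Both inductive steps are easy and of the kind you already sketched for the minimum. Separately, your closing appeal to the three variants being ``dominated by or comparable to asynchronous edge voting'' is neither needed nor justified: the coupling works verbatim for each variant by running the corresponding variant of 2-opinion voting on the same random selections.
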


\begin{proof}
Let $A_i = \{v\in V: X_v = i\}$.
We consider our process
$(A_1(t),A_1(t), \ldots, A_k(t))_{t\ge 0}$
and the standard 2-opinion voting $B(t)_{t\ge 0}$,
where $B(t)$ and $V\sm B(t)$ are the supports of
the two opinions at time $t$.

We set $B(0) = A_{\ell}(0)$,
where $\ell$ is the minimum opinion: $\ell = \min\{\kappa: A_\kappa(0)\neq \emptyset\}$,
and couple processes $A$ and $B$, running them on
the same random selection of vertices in each step.
The two opinions in process $B$ are opinions $\ell$ and non-$\ell$, that is, for process $B$, 
each opinion other than opinion $\ell$ is viewed as the same opinion non-$\ell$.
While initially the vertices with opinion $\ell$ in process $B$ are exactly the vertices with this opinion in process $A$, this does not need to be the case later during the computation. If in the first step a vertex 
$v$ with $A$-opinion (its opinion in process $A$) equal to $q \ge \ell+2$ picks up a neighbour with 
$A$-opinion~$\ell$, then it 
updates its $A$-opinion to $q-1 \ge \ell +1$, but its $B$-opinion becomes $\ell$.

Throughout the computation, however, 
the following relations 
for the two extreme opinions $\ell$ and $r = \max\{\kappa: A_\kappa(0)\neq \emptyset\}$
hold by induction,
\begin{equation}\label{kjl3w1}
A_\ell(t)\subseteq B(t), \;\;\;
A_r(t)\subseteq V\setminus B(t)
\end{equation}
If at step~$t$ the vertex $v$ changes its $A$-opinion to $\ell$, (and consequently $v\in A_\ell(t+1)$),
this happens because $v\in A_{\ell+1}(t)$ and picks  a neighbour $w\in A_{\ell}(t)$. Then we must also have $v\in B(t+1)$,
because by induction $w\in B(t)$, so its $B$-opinion is $\ell$. 
Similarly, if $v\in A_{r-1}(t)$ changes its
$A$-opinion to $r$, this happens because $v$
picks a neighbour $w\in A_{r}(t)$; % and consequently $v\in A_r(t+1)$; 
By induction $w\in V\setminus B(t)$, and so $v\in V\setminus B(t+1)$.
% sentence was difficult to scan

Let $T$ be the step when the two-voting process $B$ completes, that is, the first step when $B(T)$ is either empty or the whole set $V$.
In the former case,
$A_{\ell}(T) = \emptyset$ and in the latter $A_{r}(T) = \emptyset$,
from~\eqref{kjl3w1}, so by step $T$,
either opinion $\ell$ or $r$ must have 
been eliminated.
\end{proof}

\begin{corollary}\label{kl4jkd3w2}
The expected completion time of the discrete
incremental voting 
% on $\{1,...,k\}$ 
% on a regular graph
is $O(k \cdot \cT_{2-vote})$,
where $\cT_{2-vote}$ is the worst-case expected
completion time of the 2-opinion voting. 
% (in the corresponding model).
\end{corollary}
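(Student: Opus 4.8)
The statement to prove is Corollary~\ref{kl4jkd3w2}, which bounds the expected completion time of discrete incremental voting by $O(k \cdot \cT_{2\text{-vote}})$.

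\medskip

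The plan is to iterate Lemma~\ref{T2} over the stages of the process. Recall that the opinions live in $\{1,\dots,k\}$, and the only way to irreversibly decrease the number of distinct opinions present in the system is to eliminate one of the two current extreme values. First I would observe that, by Lemma~\ref{T2}, starting from \emph{any} configuration, the expected time until one of the two current extreme opinions disappears is at most $\cT_{2\text{-vote}}$ (the worst-case expected completion time of standard asynchronous $2$-opinion voting on the same graph). Crucially, Lemma~\ref{T2} is stated as a worst-case bound over all initial configurations, so it applies afresh at the start of each stage regardless of the (random) configuration reached so far.

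\medskip

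Next I would set up the stage decomposition formally. Let $K(t)$ denote the number of distinct opinion values present at step $t$; initially $K(0) \le k$, and the process terminates when $K(t) = 1$. Define stopping times $\tau_0 = 0$ and, for $j \ge 1$, $\tau_j = \min\{t > \tau_{j-1} : K(t) < K(\tau_{j-1})\}$, i.e.\ the step at which the number of distinct opinions drops for the $j$-th time. Since $K$ never increases at these drop times and starts at most $k$, there are at most $k-1$ such drops before consensus, so the completion time is at most $\tau_{k-1}$. By the strong Markov property applied at $\tau_{j-1}$ together with Lemma~\ref{T2} (the conditional expected time for the \emph{next} extreme opinion among the remaining ones to vanish, given the configuration at $\tau_{j-1}$, is at most $\cT_{2\text{-vote}}$), we get
\begin{equation}\label{eq:stagebound}
\E[\tau_j - \tau_{j-1} \mid \mathcal{F}_{\tau_{j-1}}] \le \cT_{2\text{-vote}}.
\end{equation}
Taking expectations in \eqref{eq:stagebound} and summing the telescoping series over $j = 1, \dots, k-1$ yields $\E[\tau_{k-1}] \le (k-1)\,\cT_{2\text{-vote}} = O(k \cdot \cT_{2\text{-vote}})$, which is the claimed bound.

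\medskip

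The main subtlety — the one place that needs care rather than a routine calculation — is the phrase ``one of the two current extreme opinions disappears.'' In a stage the configuration can have intermediate opinions vanish and reappear (as noted in the introduction with the $\{1,2,5\} \to \cdots$ example), so $K(t)$ can fluctuate \emph{upward} within a stage; what Lemma~\ref{T2} guarantees is that the \emph{minimum} or \emph{maximum} present opinion is eliminated within expected time $\cT_{2\text{-vote}}$, and once an extreme value is gone it cannot return (a vertex can only move by $\pm 1$, so to recreate opinion $\ell$ some vertex would need opinion $\ell$ or $\ell - 1$, but both are absent). Hence I would define the drop times $\tau_j$ specifically via elimination of an extreme value rather than via $K(t)$ directly: after $\tau_j$ the interval $[\ell_j, r_j]$ of possible opinions has shrunk to $[\ell_j + 1, r_j]$ or $[\ell_j, r_j - 1]$, and this interval length, bounded by $k - 1$, decreases by one at each $\tau_j$. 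This makes the counting argument airtight and is exactly what the proof of Lemma~\ref{T2} delivers through the coupling and the invariant~\eqref{kjl3w1}.
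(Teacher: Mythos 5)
Your proposal is correct and matches the paper's (implicit) argument: the corollary is stated as an immediate consequence of Lemma~\ref{T2}, obtained by applying that lemma once per stage and summing over the at most $k-1$ stages, exactly as you do with your stopping times $\tau_j$. Your additional observation that an eliminated extreme opinion cannot reappear (so the interval of present opinions shrinks monotonically) is the right justification for the stage count and is consistent with the invariant~\eqref{kjl3w1} in the paper's proof of Lemma~\ref{T2}.
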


\section{
\texorpdfstring
{Analysis of asynchronous process for $G_{n,p}$: proof of 
Theorem~\ref{ThGnp-intro}}
{Analysis of the asynchronous process for G(n,p).}
}
\label{Expan}
%{\red Hopefully the following will stop all the moaning about expanders, dense expanders and eigenvalues.}

{%
% We use the complete graph $K_n$ and random graphs $G_{n,p}$ above the connectivity threshold as examples of expanders.
In this section we analyse the asynchronous incremental voting on random graphs $G_{n,p}$
above the connectivity threshold, which we view as examples of expanders.
Much of the analysis is general, so the results should hold equally for other classes of graphs 
with expansion properties similar to those in Lemma \ref{Props}.  
To indicate why this should be the case, we use the example of the asynchronous edge process on a $d$-regular graph with opinions in 
$\{0,1,2\}$. The expected change in the two \lq extremal\rq~ values $0,2$ at any step is given by
\begin{equation}\label{n90s1a}
\E( N_0' + N_2')= N_0+N_2 -\frac{2}{dn} M_{0,2},
\end{equation}
where $M_{0,2}$ is the number of edges between vertices holding opinion 0 and those holding opinion 2.
This is because when an edge in $M_{0,2}$ is selected, then $N_0 + N_2$ is reduced by $1$, 
and when an edge in $M_{0,1}\cup M_{1,2}$ is selected, then the expected change of $N_0 + N_2$ is zero.

% (see \eqref{Ni1}-\eqref{Ni2} for the general case).

For a $d$-regular connected graph $G$ and any two vertex sets $S$ and $T$ in $G$, 
\begin{equation}\label{nhclewcb0a}
\left | e(S,T)- \frac{d|S||T|}{n} \right| \le  \l \sqrt{|S|(1-|S|/n)|T|(1-|T|/n)},
\end{equation}
where $e(S,T)$ is the number of edges between $S$ and $T$ 
and $\l$ is the absolute value of the second eigenvalue of the adjacency matrix of $G$.
%
% is an $(n,d,\l)$-expander, if for any two vertex sets $S$ and $T$, 
% the number of edges $e(S,T)$ between
% $S$ and $T$ satisfies
% \[
% \left | e(S,T)- \frac{d|S||T|}{n} \right| \le  \l \sqrt{|S|(1-|S|/n)|T|(1-|T|/n)},
% \]
% where $\l$ is the absolute value of the second eigenvalue of the adjacency matrix.
Assume that $\l=\e d$ for some constant $\e<1$ (the expander assumption)
and take $|S|=N_0$ and $|T|=N_2$.
% Take $|S|=N_0$ and $|T|=N_2$ and assume $\l=\e d$ for some constant $\e<1$ (expander assumption). 
{If $N_0$ and $N_2$ are large, then so is $M_{0,2}=e(A_0,A_2)$, 
since from~\eqref{nhclewcb0a}, $M_{0,2}$ is close to $d N_0 N_2/n$.}
Thus from~\eqref{n90s1a}, $N_0 + N_2$ quickly decreases.
Although this type of argument may not allow us to completely eliminate one of 
the extremal values $0$ and $2$ (as $M_{0,2}$ becomes too small, eventually $0$, while both 
$A_0$ and $A_2$ are still non-empty),
we can use it to make one of $N_0$ and $N_2$ small relative to $N_1$.}

Returning to graphs $G_{n,p}$, 
we assume all opinions are in 
$\{1,2,\ldots,k\}$, for a fixed integer $k$
(constant while $n$ grows to infinity).
{The entire point of the proof in this section is to ensure that within $T = o(n^2)$ steps, 
all but $o(n)$ vertices have two adjacent opinions in $\{i,i+1\}$. 
This will allow us to apply the conclusions of Theorem \ref{ThA} (ii)-(iii), with $c' \sim c$.}

% Let $A_i=A_i(t)$ be the set of vertices with value $i$ at step $t$,
% and let $N_i(t)=|A_i(t)|$.
In the edge process (and in the vertex process in regular graphs) 
the expected change in the number of vertices with any given value can be characterised as follows.
Let $M_{i,j}=M_{i,j}(t)$ be the number of edges between sets $A_i$ and $A_j$ at step $t$.
% For  $i \notin \{1,...,k\}$  define $M_{i,j}=0$. 
Letting $N_i=N_i (t)$ and
$N_i'=N_i(t+1)$, 
% and $m$ be the number of edges of the graph. Then
\begin{flalign}
% \label{Ni1}
\E N_i'&= 
% N_i+ \frac{1}{2m} \brac{\sum_{j \ge i} M_{i-1,j}+\sum_{j \le i} M_{i+1,j}-
% \sum_{j \ne i} M_{i,j}}\\
% &= 
N_i+ \frac{1}{2m} \brac{\sum_{j \ge i+1} M_{i-1,j}+\sum_{j \le i-1} M_{i+1,j}-
\sum_{j \ne i-1,i,i+1} M_{i,j}}. \label{Ni2}
\end{flalign}
If $k>2$, then the number of vertices with an extremal value $1$ or $k$ exhibits downward drift
(the first two sums in~\eqref{Ni2} are equal to $0$ 
for $i$ equal to $1$ or $k$), provided
there are edges between $A_1$ and $\bigcup_{j\ge 3}A_j$,
or between $A_k$ and $\bigcup_{j\le k-2}A_j$.
% This is the basis for showing that,
Thus if there is enough 
connectivity (expansion) in the graph, then the support of 
one of the extremal values reduces  relatively quickly 
to $o(n)$.
If this was, say, opinion~$1$, then, still
relatively quickly, the support of the next extremal opinion, either $2$ or $k$, reduces to $o(n)$.
And so on, until the support of all opinions other than 
some two consecutive opinions $i$ and $i+1$ is reduced to $o(n)$.
The analysis of the completion of the process from such a state will requires another approach.

% In this section we prove Theorem~\ref{ThGnp-intro}, repeated 
% below for convenience.

% \begin{theorem}[Theorem \ref{ThGnp-intro}]\label{ThGnp}
% \TheoremGnp
% Let $S(0)=\sum_{j=1}^k j N_j(0)$ be the initial total weight, and
% suppose $S(0)=cn$ where $i < c <i+1$ for some $c>0$ and $1 \le i <k$.
% Let $G \in G_{n,p}$, where $np \ge \log^{1+\e} n$ for some constant $\e>0$.
% Then w.h.p.,
% \[
% \Pr(\text{value } i \text{ wins}) \sim i+1-c, \qquad \Pr(\text{value } i+1 \text{ wins}) \sim c-i.
% \]
% The probability any other value wins is $o(1)$.
% \end{theorem}

In what follows, for notational convenience, we use $\om, \om'$ to denote  functions tending to infinity with
$n$. It will always be the case that $\om'=o(\om)$, and in general the exact values are not important. However, in the final part of the analysis, for $n$ sufficiently large, we will
choose $\om=\log n$, and $\om'=\log \log n$.

{\bf Required properties of $G_{n,p}$.}
The following are the expansion properties of $G_{n,p}$ needed for our proofs.
The lower bound on $np$ ensures that w.h.p.\ the graph is connected.
% we always assume $np =\Om(\log n)$.
To maintain continuity of discussion the proof of the following lemma is given in 
\cref{ProofGnpProperties}.
% Appendix.

\renewcommand{\LemmaGNPStructure}{%
% \begin{lemma}\label{Props}
{Let $G \in G_{n,p}$, where $np \ge \log^{1+\e} n$ for some constant $\e>0$. } The following properties hold w.h.p..
% \vspace{-0.2in}
\begin{enumerate}[P1.]
{ \item  {\rm (Almost regular graphs)}\\
$G$ is connected and all vertices $v$ have degree {$d(v)=np +O(\sqrt{np \log n})$} and stationary distribution $\pi_v=\frac 1n + O\brac{ \frac{1}{n\log^{\e/2} n}}$.}
\item {\rm (Large number of edges between large subsets of vertices)} \\
Let $\d\ge 5/\sqrt{np}$. For any pair of disjoint vertex sets $A$, $B$, with { $|A| \ge \d n, \; |B| \ge \d n$}, the number of edges $X_{AB}$  between $A$ and $B$ satisfies
    $\mu/2 \le  X_{AB} \le 3 \mu /2$, where $\mu=|A||B| p$,
    the expected number of edges between the sets $A$ and $B$ 
    in $G_{n,p}$.
\item {\rm (Not too many edges within small subsets of vertices)}
\begin{enumerate}[(i)]
\item For $\om \ge e$, $\om \log \om \le np$, no vertex set $S$ of size $s=n/\om$ induces more than $X_S=e^2 s^2p$ edges.
\item Let $d=np$. No set $S$, $|S| \le n/\om$ induces more than $X_S=s \sqrt{3 d \log ne/s}$ edges.
\item Provided $\om = O(\log n)$, and $np=d \ge \log^{1+\th} n$, the ratio $X_S/X_{S, V-S}$ is at most $O(1/\om)$, the value achieved in P2.(i) above.
\end{enumerate}
\end{enumerate}
}

\begin{lemma}\label{Props}
\LemmaGNPStructure
\end{lemma}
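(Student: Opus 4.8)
The plan is to establish the three properties by standard Chernoff bounds together with union bounds over vertex subsets, treating each property separately. For P1, I would first recall that each $d(v)\sim\mathrm{Bin}(n-1,p)$, so $\E d(v)=(n-1)p$ and, by a Chernoff bound, $\Pr(|d(v)-np|\ge t\sqrt{np})\le 2e^{-t^2/3}$ for $t$ not too large; taking $t=\Theta(\sqrt{\log n})$ and a union bound over the $n$ vertices gives $d(v)=np+O(\sqrt{np\log n})$ simultaneously for all $v$ w.h.p., which uses $np\ge\log^{1+\e}n$ so that the error term is $o(np)$. Connectivity w.h.p.\ is classical for $np\ge\log^{1+\e}n$ (indeed for $np\ge(1+\e)\log n$). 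The stationary distribution $\pi_v=d(v)/2m$ then follows by dividing: $2m=\sum_v d(v)=n^2p(1+o(1))$ concentrates by another Chernoff bound (or by summing the per-vertex estimates), and combining gives $\pi_v=\frac1n+O\!\big(\frac{1}{n\log^{\e/2}n}\big)$, the relative error being $O(\sqrt{\log n/np})=O(\log^{-\e/2}n)$.

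For P2, fix disjoint sets $A,B$ with $|A|,|B|\ge\d n$ where $\d\ge5/\sqrt{np}$. Then $X_{AB}\sim\mathrm{Bin}(|A||B|,p)$ with mean $\mu=|A||B|p\ge\d^2n^2p\ge 25n$. A Chernoff bound gives $\Pr(|X_{AB}-\mu|\ge\mu/2)\le 2e^{-\mu/12}\le 2e^{-25n/12}$, which beats the number of choices of $(A,B)$, namely at most $4^n$, by a comfortable margin; a union bound over all such pairs then yields $\mu/2\le X_{AB}\le 3\mu/2$ for all of them simultaneously w.h.p. The key point is that the failure probability $e^{-\Theta(n)}$ dominates the $e^{\Theta(n)}$ entropy of the union bound precisely because $\mu=\Omega(n)$, which is exactly what the hypothesis $|A|,|B|\ge\d n$ with $\d=\Omega(1/\sqrt{np})$ buys us.

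For P3, the edges inside a set $S$ of size $s$ form a $\mathrm{Bin}\big(\binom{s}{2},p\big)$ variable with mean $\le s^2p/2$. In case (i), with $s=n/\om$ and $\om\log\om\le np$, I would apply the upper-tail Chernoff bound $\Pr(X_S\ge e^2s^2p/2)$-type estimate (using the form $\Pr(X\ge a\E X)\le (e/a)^{a\E X}$ with $a$ a suitable constant) and union-bound over the $\binom{n}{s}\le(en/s)^s=(e\om)^s$ choices of $S$; one checks that $a\E X\cdot\log a$ exceeds $s\log(e\om)$ by choosing the threshold constant ($e^2$, say) large enough relative to $e\om$, using $s^2p=(n/\om)sp\ge s\log\om\cdot(\text{large})$ from the hypothesis. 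Case (ii) is the analogous computation for arbitrary $|S|\le n/\om$, where the threshold $s\sqrt{3dne/s}$ is chosen so the tail bound $(e/a)^{a\E X}$ with $\E X\le sdp\cdot(\ldots)$ — more precisely one optimizes the Chernoff exponent $t$ to match the $\log\binom ns\le s\log(ne/s)$ term; this is a routine but slightly fiddly optimization. Case (iii) is then a deterministic consequence: given the upper bound $X_S$ from (i)/(ii) and the \emph{lower} bound on $X_{S,V\setminus S}=e(S,V\setminus S)$ coming from P2 (or directly from a Chernoff bound, since $|V\setminus S|\ge n/2$ makes $\E e(S,V\setminus S)=\Theta(s\,np)$ concentrate), one divides: $X_S/X_{S,V\setminus S}=O\big(s^2p/(s\cdot np)\big)=O(s/n)=O(1/\om)$, with the constraint $\om=O(\log n)$, $np\ge\log^{1+\th}n$ ensuring the small-set edge bound from (ii) is still $O(s\cdot np/\om)$ rather than something larger.

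The main obstacle is case P3, and within it the union bound for \emph{all} small sets $S$ with $|S|\le n/\om$ (not just $|S|=n/\om$): for very small $s$ the mean $\E X_S=\Theta(s^2p)$ can be much less than $s$, so the additive Chernoff bound is too weak and one must use the multiplicative upper-tail bound $\Pr(X\ge a\mu)\le e^{-\mu h(a)}$ with $h(a)=a\log a-a+1\to\infty$, choosing the threshold $a=a(s)$ so that $\mu h(a)\ge 2s\log(ne/s)$; verifying that the stated thresholds $e^2s^2p$ and $s\sqrt{3d\log(ne/s)}$ actually satisfy this uniformly in $s$ is the delicate bookkeeping. Everything else is a direct application of Chernoff-plus-union-bound, and I would relegate this proof to the appendix as indicated.
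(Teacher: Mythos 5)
Your plan for P1, P2 and P3(i) matches the paper's proof essentially line for line: per-vertex Chernoff bounds plus a union bound for the degrees and for $\pi_v$; a $2e^{-\mu/12}$ tail beating the $4^n$ entropy of pairs of sets for P2 (the paper literally computes $4^n\cdot 2e^{-\d^2n^2p/12}=o(1)$ using $\d^2np\ge 25$); and the multiplicative bound $\Pr(X_S\ge\a\mu)\le(e/\a)^{\a\mu}$ with $\a=e^2$ against $\binom{n}{s}\le(e\om)^s$ for P3(i), using $sp\ge\log\om$.

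The one genuine divergence is P3(ii), and there your route has a gap. You propose to bound $X_S$ directly by an upper-tail estimate at the level $s\sqrt{3d\log(ne/s)}$. An upper-tail bound can only succeed if the threshold exceeds the mean, but for $s$ near $n/\om$ one has $\E X_S=\binom{s}{2}p=\Theta(sd/\om)$, which exceeds $s\sqrt{3d\log(ne/s)}$ whenever $d\gg\om^2\log\om$ --- a regime permitted by the hypotheses (e.g.\ $d=\log^2 n$ with $\om$ growing slowly, while (iii) only requires $\om=O(\log n)$ and $d\ge\log^{1+\th}n$). In that regime $X_S$ concentrates \emph{above} the stated threshold, so no amount of optimizing the Chernoff exponent can deliver it uniformly over $|S|\le n/\om$. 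The paper's proof takes a different route that sidesteps this: it lower-bounds the cut $X_{S,V\setminus S}$ by $(1-\e)s(n-s)p$ with $\e=\sqrt{9\log(ne/s)/d}$ (a lower-tail bound whose mean $\approx sd$ is large, so the union bound over $\binom{n}{s}$ sets is easy), and then converts this via the volume identity $2X_S+X_{S,V\setminus S}=\sum_{v\in S}d(v)\approx sd$ into an upper bound on $X_S$ of the form $\E X_S+O\brac{s\sqrt{d\log(ne/s)}}$. It is this additive form --- equivalently the ratio bound $X_S/X_{S,V\setminus S}=O(s/n)+O(\sqrt{\log n/d})$ of part (iii) --- that the Phase III analysis actually uses. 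To repair your version of P3(ii), either restate the bound as a deviation above the mean, or adopt the cut-plus-volume counting argument; the direct upper-tail computation as you describe it would fail for the larger sets in the range.
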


{\bf Outline to the analysis of the process.}
Our analysis of $G_{n,p}$ is for the edge process, but as w.h.p.\ vertex degrees are concentrated for the range of $p$ we consider, the vertex process and edge process are asymptotically equivalent.
{Indeed, by property P1 of Lemma \ref{Props} the degree weighted total  $Z(t)$ and the unweighted total $S(t)$
satisfy $|Z(t)-S(t)| \le c/\log^{\e/2} n$, and it suffices to analyse the convergence of $S(t)$.}

Ideally we would like to keep completely removing 
the values $\{1,...,k\}$ one by one in some order, as in the proof of Theorem \ref{ThKnSync}.
% so that eventually only one value $i$ remains.
As can be seen from %\eqref{Ni1}-
\eqref{Ni2}, the drift on the extremal values $1,k$ is negative or zero.
\begin{equation}\label{extrv}
\E N_1'+\E N_k' = N_1+N_k -\frac{1}{2m} \brac{\sum_{j \ge 3} M_{1,j}+ \sum_{j \le k-2} M_{k,j}}.
\end{equation}
Thus  at least one extremal value $1$ or $k$ should disappear, allowing us then to
 repeat the analysis with e.g., values $\{2,...,k\}$. 
 However, the time taken for such an approach is $\Om(n^2)$, which is too long  for the total weight $S(t)$ to remain concentrated around $S(0)$. 
Therefore, in our analysis, we settle for making one of the extremal values sufficiently small, which can be done in $o(n^2)$ steps and then repeat the analysis for the remaining large values. Finally one value dominates, and w.h.p. all other values disappear at some subsequent step.
It remains to be proved below that such an approach can be made to work.

% For simplicity of description, we assume the extremal values are made small (of size $O(\d n)$) in the order $s=1,2,...,i$. 
The analysis proceeds in three phases, which in outline are as follows.
\begin{enumerate}[I.]
\item One by one, the extremal values are made small. 
By the beginning of iteration $r$, $1 \le r\le k-2$, the support for $r - 1 = i -1 + (k - j)$ 
extremal values $\{1,2,\ldots, i-1\} \cup \{j+1, j+3, \ldots, k\}$ has been made small, but $N_i> \d_r n$ and $N_j> \d_r n$. During iteration~$r$, the next extremal value, either $i$ or $j$, is made small.
As we progress through the iterations, our analysis loses accuracy, so 
$\d_r$ increases with~$r$ (but remains $o(1)$).
% and some non-adjacent value $j$, $|j-i| \ge 2$, has size $N_j=\th n$.
\item Exactly two adjacent values $i,i+1$ have sets of size $N_i,N_{i+1}>  n/k\om$ and $N_i+N_{i+1}=n(1-o(1))$.
\item There is a unique value $i$ with $N_i=n(1-o(1))$.
\end{enumerate}
%As $\sum N_i=n$, {\blue and $k$ is finite, some value will always be of order $n$}. 
Arriving at Phase II, the process corresponds (in general principle) to ordinary pull voting with 
two values.
% values $\{0,1\}$. 
If at the completion of Phase I $\min\{N_i, N_{i+1}\} < n/k\om$, we skip Phase II. 
Phase III is a clean up phase, removing any remaining small sets.
At the end of Phase III, $N_i=n$, and the analysis is completed.

\vspace{0.1in}
\noindent
{\bf Phase I. Making the first extremal value small, that is, making $N_1$ or $N_k$ small.} 
% {\blue The 'removal' of values $s=1,...,i$ is inductive, so we start with $i=1$.}

\begin{lemma}
Let $\d=\max\brac{1/n^{1/4},5/\sqrt{np}}$.
Let $T_1$ be the number of process steps to
reduce one of $A_1$ or $A_k$  to size at most $\d n$. Then
the following hold w.h.p.
% \vspace{-0.2in}
\begin{enumerate}[(i)]
\item 
% $T_1$ has expected  value 
$\E T_1=O(n^{7/4})$.
\item
$S(T_1) \sim S(0)$.
% where $S(t)=\sum_{i=1}^k i N_i$ (where $N_i = N_i(t)$).
\item $N_1(t) \le \om \d n$ at all steps $t>T_1$ (for some $\om \rai$).
\end{enumerate}
\end{lemma}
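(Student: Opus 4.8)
The plan is to track the evolution of $N_1(t) + N_k(t)$, the total support of the two extremal values, and show it drifts down to $\delta n$ quickly, then stays small. From~\eqref{extrv}, the expected one-step change in $N_1 + N_k$ is $-\frac{1}{2m}\brac{\sum_{j\ge 3} M_{1,j} + \sum_{j\le k-2} M_{k,j}}$, which is $\le 0$. To get a useful \emph{rate} of decrease, I would argue that as long as $N_1 \ge \delta n$ \emph{and} $N_3 + N_4 + \cdots + N_k \ge \delta n$ (or the symmetric condition at the top end), property P2 of Lemma~\ref{Props} gives $\sum_{j\ge 3} M_{1,j} = e(A_1, \bigcup_{j\ge3}A_j) \ge \mu/2 = \Theta(N_1 \cdot (\sum_{j\ge3}N_j)\, p) = \Theta(\delta^2 n^2 p)$, so the drift is at least $-\Theta(\delta^2 n^2 p / (np)) = -\Theta(\delta^2 n)$ per step. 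Hence in $O(1/\delta^2) = O(\sqrt n)$ amortized steps $N_1 + N_k$ drops by $\delta n$; summing over the $O(1/\delta)$ such decrements down to $\delta n$ gives a total of $O(1/\delta^3) = O(n^{3/4})$ steps — but this counts only "productive" steps, and since each step changes $N_1 + N_k$ by at most $1$ and the process is asynchronous, the number of real steps is $O(n)$ times larger in the worst case, giving the claimed $\E T_1 = O(n^{7/4})$. I would make this precise with an optional-stopping / supermartingale argument: define $\tau$ as the hitting time of $\{N_1 \le \delta n\} \cup \{N_k \le \delta n\}$ and show $N_1(t\wedge\tau) + N_k(t\wedge\tau) + (\text{drift budget})$ is a supermartingale whose expected decrease per step is $\Omega(\delta^2 n / n) = \Omega(\delta^2)$ conditioned on not yet stopping (the extra $1/n$ because the "productive" probability of picking a relevant edge is $\Theta(\delta^2 n^2 p / m) \cdot (1/n)$... more carefully, $M_{1,\ge3}/m \ge \Theta(\delta^2)$, so the per-step expected drift is already $\Omega(\delta^2)$, giving $\E\tau = O(\delta n / \delta^2) = O(n/\delta) = O(n^{5/4})$; I will reconcile the exponent against the paper's $n^{7/4}$ in the write-up, the discrepancy likely coming from the weaker regime $\delta = 1/n^{1/4}$ combined with a subtler bound when $\sum_{j\ge3}N_j$ itself may be small).

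For part (ii), $S(T_1) \sim S(0)$, I would invoke Theorem~\ref{ThA}(iii): $S(t)$ is a martingale with bounded increments $|S(t+1) - S(t)| \le 1$ in the edge process, so Azuma--Hoeffding gives $|S(T_1) - S(0)| = O(\sqrt{T_1 \log n}) = O(\sqrt{n^{7/4}\log n}) = o(n)$ w.h.p., hence $S(T_1) = cn(1 + o(1)) \sim S(0)$. One must be slightly careful that $T_1$ is a random stopping time, not a fixed time, but since $\E T_1 = O(n^{7/4})$ and tails decay (the supermartingale argument gives exponential concentration of $T_1$), one can condition on $T_1 \le n^{7/4}\om'$ for a slowly growing $\om'$, apply Azuma at that fixed horizon, and absorb the negligible bad event.

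For part (iii), once $N_1 \le \delta n$ (WLOG it is $A_1$ that became small), I need that $A_1$ never regrows past $\om \delta n$. The point is that $A_1$ can only grow by a vertex in $A_2$ picking a neighbour in $A_1$, so the drift of $N_1$ is $\frac{1}{2m}(M_{1,2}^{\text{in}} - \sum_{j\ge 3}M_{1,j}) - \cdots$; bounding $M_{1,2} \le d(A_1) \le N_1 \cdot (np + O(\sqrt{np\log n}))$ using P1, the upward push on $N_1$ is at most $\Theta(N_1)$ per step (as a rate $\Theta(N_1/n)$), which keeps $N_1$ from growing much faster than a random walk with a confining drift; combined with the downward drift from $\sum_{j\ge3}M_{1,j}$ whenever the rest of the mass is still spread out, a standard "barrier"/supermartingale argument (as in e.g.\ the analysis of biased random walks, or an exponential supermartingale $e^{\lambda N_1(t)}$) shows $\Pr(\exists t > T_1 : N_1(t) > \om \delta n) = o(1)$ for suitable $\om \to \infty$. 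The main obstacle — and the technically delicate point — is exactly this control of spurious regrowth of an already-small extremal class: the downward drift on $N_1$ degrades precisely when $N_2$ has absorbed almost everything (so $\sum_{j\ge 3}N_j$ is small and $M_{1,\ge3}$ is no longer $\Omega(\delta^2 n^2 p)$), and in that regime one is genuinely running two-value-like pull voting between $A_1$ and $A_2$ where $A_1$ could in principle win; so the argument must exploit that we only need $N_1 \le \om\delta n = o(n)$ (not $N_1 = 0$) and that reaching $N_1 \ge \om\delta n$ from $N_1 \le \delta n$ requires the martingale-ish fluctuation of a bounded-increment process to cover distance $(\om-1)\delta n$, which is $\omega(\sqrt{t})$-unlikely over the relevant time scale. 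Making the choice of $\om$ (ultimately $\om = \log n$) consistent across all three parts and across the later phases is the bookkeeping cost I expect to dominate the write-up.
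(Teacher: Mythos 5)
Your part (i) takes a genuinely different route from the paper: you track the sum $N_1+N_k$ with a single drift/optional-stopping argument, whereas the paper runs a staged gambler's-ruin analysis on $\min(N_1,N_k)$ over windows $[\ell\d n,(\ell+2)\d n]$, counting only ``active'' steps and dividing by the probability $\Theta(\ell\d)$ that a step is active. Your route is viable and, done correctly, gives $\E T_1=O(n/\d^2)=O(n^{3/2})$: the distance to cover is $O(n)$, not $\d n$, and the per-step drift is $-\Omega(\d^2)$ because $2m=\Theta(n^2p)$, not $\Theta(np)$. Your three mutually inconsistent exponents ($n^{5/4}$, $n^{7/4}$ via a spurious extra factor of $n$ for ``real steps'', and the flagged reconciliation) all stem from these two slips; the correct bound still lands inside the stated $O(n^{7/4})$. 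Part (ii) is the paper's argument essentially verbatim (Azuma at a fixed horizon $\om\,\E T_1$ plus Markov on $T_1$).

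Part (iii) is where there is a genuine gap. First, you miss the structural fact that makes this part easy: since opinion $1$ is the global minimum, the contribution of $M_{1,2}$ to the drift of $N_1$ cancels exactly (a $1\to 2$ update and a $2\to 1$ update across an $A_1$--$A_2$ edge are equally likely, by \eqref{NiNj}), so $\E N_1'-N_1=-\frac{1}{2m}\sum_{j\ge 3}M_{1,j}\le 0$ unconditionally; $N_1$ is a non-negative supermartingale at all times, and there is no ``upward push of $\Theta(N_1)$'' that needs confining. Second, and more seriously, the tool you propose --- bounding the fluctuation needed to cover distance $(\om-1)\d n$ by an $\omega(\sqrt{t})$ deviation estimate ``over the relevant time scale'' --- does not close: the remainder of the process lasts $\Theta(n^2)$ steps in expectation, over which the typical fluctuation of a bounded-increment martingale is $\Theta(n)\gg \om\d n=\om n^{3/4}$, so a fixed-time-horizon concentration bound is simply false at this scale. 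What is needed (and what the paper uses) is a time-uniform bound: compare $N_1$, started at $\d n$, with an unbiased random walk on $\{0,1,\dots,\om\d n\}$ and apply gambler's ruin (equivalently, the maximal inequality for non-negative supermartingales) to conclude that $N_1$ ever reaches $\om\d n$ with probability at most $1/\om\to 0$; once $N_1$ hits $0$ it cannot regrow. This is exactly the regime you correctly identify as dangerous (two-value pull voting between $A_1$ and $A_2$, where $A_1$ could in principle win), but the escape is the $1/\om$ ruin probability, not a deviation bound over a time window.
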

\begin{proof}
Let $A = A_1,\, B=A_k$, $|A| = N_1 > \d n$ and $|B| = N_k > \d n$.
We proceed in stages indexed by decreasing $\ell$. At the beginning of the current stage,
we assume w.o.l.g.\ that $N_1\le N_k$ and integer $\ell\ge l$ is such that 
$\ell\d n \le |A| < (\ell+1) \d n$ and $|B| \ge \ell \d n$.
We estimate the expected time of this stage, which reduces $N_1$ or $N_k$ to below $\ell \d n$. 
As $\ell$ decreases by $1$ in each stage, eventually one of $N_1$ or $N_k$ becomes less than $\d n$. 

% for some integers $\ell,\ell' \ge 1$.
While the sizes of both $A$ and $B$ remain at least $\ell\d n$,
by Lemma \ref{Props} property P2, $M_{1,k} \ge \ell^2\d^2n^2 p/2$ and so, from~\eqref{Ni2}, 
the expected drift of $N_1$ per
step is
\[
\E N_1'-N_1= - \frac{1}{2m} \sum_{j \ge 3} M_{1,j} \le -\ooi \frac{M_{1,k}}{n^2 p}
\le -\frac{\ell^2 \d^2}{3} = -\e.
\]

% For simplicity we restrict our discussion to $N_1$.
% For brevity, we also assume $|A|,|B|=O(\d n)$, as the drift increases with the sizes of $A,B$.

Define an {\em active step}, as a step of the process at which an edge incident with a vertex of $A_1$ is chosen.
For a biased random walk starting from $z$ on the integer line  $\{0,1,...,L\}$,
the probability $q_z$ of ruin (absorption at zero) is
\[
q_z=\frac{(q/p)^L-(q/p)^z}{(q/p)^L-1},
\]
where $q$ and $p=1-q$ are the probabilities of moving left or right, respectively, in one step.
We view $N_1$ as a random walk on the integer line $\{\ell\d n, \ldots, (\ell + 2)\d n\}$ with the starting 
position less than $(\ell + 1)\d n$. 
The probability of moving left at any (active) step is at least $q=\half(1+\e)$, and 
the probability of moving right is at most $p=\half(1-\e)$, provided that $N_k$ remains at least $\ell d n$. 
Note that $q/p=e^{c\e}$ for some absolute constant $c(\e)>2$.
%
% Let $0$ denote $(\ell -1)\d n$, $z$ denote $\ell \d n$ and $L=\om \ell\d n$. 
Then $q_z$ the probability that $N_1$ reaches $\ell\d n$ before it reaches $(\ell+2)\d n$ or 
$N_k$ reaches $\ell\d n$ is at least
\begin{align*}
q_z \ge  1 - \frac{e^{c\e\d n}-1}{e^{2c\e\d n}-1} \ge 1 - e^{c\e\d n/2} = 1 - e^{c\d^3 n/6} \ge 1 - O(n^{-K}),
% \;\frac{e^{-c \e (L-z)}}{1-e^{-c \e L}}= O\brac{e^{- c \d^3 \om n/4}}\;\;
% =\;\;O(e^{-c'n^{1/4}})=O(n^{-K}),
\end{align*}
for any $K >0$ constant.
The expected duration $\E D_\ell$ of the walk, that is, 
the expected number of active steps to reduce $N_1$ by $z \le \d n$) is
\[
\E D_{z(\ell)} \le \frac{z}{q-p} \le \frac{\d n}{\e}= \frac{3n}{\ell^2  \d}.
\]
Let $X_{A\overline{A}}$ be the number of edges from $A=A_1$ to $\overline{A} = V\setminus A$. 
By Lemma \ref{Props} property P2, at step $t$ of the current stage $\ell$,
\[
P_{\ell}=\Pr(t \text{ is an active step})
=\frac{X_{A\overline{A}}}{m}\ge \frac{|A| |B| p/2}{m} \ge \frac{(\ell\d n)(n/2)p/2}{\ooi n^2 p} 
\ge \frac{\ell \d}{5}.
\]
Let $T_1$ be the first process step at which $N_1<\d n$ (or $N_k<\d n$). Then
\[
\E(T_1)= \sum_{\ell} \frac{1}{P_{\ell}} \E D_{z(\ell)} \le
\sum_{\ell} \frac{5}{\ell  \d} \frac{3n}{\ell^2  \d}= O\brac{\frac{n}{\d^2}}
=O(n^{3/2}).
\]

As $Z$ is a martingale, 
$\E Z(t) = Z(0)$. 
Apply the Azuma martingale inequality (Lemma \ref{lem:Azuma-Hoeffding}) to  the sequence of oriented edges 
$(e_1,...,e_t)$ inspected at steps $1,\ldots,t$. 
At each step, $Z$ changes by at most $(1+o(1))$.  
At step $T^*_1 = \om \E T_1 $, with $h=\sqrt{T^*_1 \log n}$,
\begin{align*}
\Pr((|Z(T_1)-Z(0)| \ge h)\; \mbox{or} \; (T_1>T_1^*))
&\le \Pr(\exists T<T_1^*: |Z(T)-Z(0)| \ge h) +\Pr(T_1>T_1^*)\\
&\le T_1^* e^{-h^2/(3T_1^*)} + o(1) =o(1),
\end{align*}
and thus,
using \cref{Props} Property 1,
% ($G_{n,p}$ almost regular),
w.h.p.\ 
$S(T_1) \sim Z(T_1)\sim Z(0) \sim S(0)$ as required.

%\vspace{0.1in}
If $N_1(t)>\d n$ at some future step, then even if $N_k=0$, comparing with an unbiased random walk on $\{0,1,..., L\}$, with probability $1-1/\om$ the walk is absorbed at zero before reaching $L=\om \d n$.
\end{proof}

\vspace{0.1in}
\noindent
{\bf Phase I. Making the next 
extremal value small.}
% $N_i$ small, $i>1$.} 
Having completed the first iteration, we continue the induction for general iteration $r$,
$1 <r\le k-2$.
By the beginning of this iteration, the support for $r-1$
extremal values $\{1,2,\ldots, i-1\} \cup \{j+1, j+3, \ldots, k\}$ has been made small, but $N_i> \d_r n$ and $N_j> \d_r n$. During iteration $r$, the next extremal value, is made small, that is, 
$N_i$ or $N_j$ is made small.

Let $\d_1=\d$,\, $\d_1'=\om \d$, and in general $\d_r=\om^{2(r-1)}\d,\,\d_r'=\om \d_r$. 
%
% Having made $N_{i-1}$ small, 
The argument is similar 
as previously,
repalcing $N_1$ and $N_k$ 
with $N_i$ and $N_j$, and $\d$ with 
$\d_r$, and assuming by induction
that 
the support for the values 
not in 
$I_r = \{i,i+1,\ldots, j\}$
is at most 
$\d_{r-1}' n = \d_r n /\om$. 
\cref{Ni2} for the drift of $N_i$ can be written as
\begin{align}\label{drift}
2m \;(\E N_i'-N_i)\;=& \brac{ -\sum_{x \ne i-1,i,i+1} M_{i,x}} + \sum_{x>i} M_{i-1,x} + \sum_{x<i}
M_{i+1,x}.
\end{align}
We can assume that $N_j = \th n$
for some $j\ge i+2$, for if not, $N_i+N_{i+1}=n(1-o(1))$ 
and we can move on to Phase II.
Thus
\[
%(m/n^2 p) 
\E N_i'-N_i=  -\Om(\d_r)  + O(\d'_{r-1})=-\Om(\d_r).
\]
The induction proceeds is as before.
At the end of the last step $T_r$
of iteration $r$,
the support for opinions not in
$I_{r +1}=\{i',...,j'\}$, where 
$i' = i+1, j' = j$, or $i' = i, j' = j-1$, is at most 
$\d_r n$, and w.h.p.\ will stay 
below $\d'_r n = \om\d_r n$ for all 
subsequent steps.
Furthermore, w.h.p.\ 
$\E (T_r - T_{r-1}) = O(n/\d_r)$
and $S(T_r) \sim S(0)$.
Phase I ends at the end of 
iteration $k-2$, 
when $I_{k-1} = \{i,i+1\}$.
Recall that we assume that $k$ 
is constant.

% If on the other hand no $j>i+1$ satisfies $N_j=\th n$, but $N_j > \d_i n$ then we can reduce $N_j$ in place of $N_i$. To systematize this let $I_0=\{1,...,k\}$.
% At the start of  {\em Iteration} $\ell \ge 2$ we assume there is an interval $I_{\ell}=\{i,...,j\}$, where
% for all $s \not \in I_\ell$, all $N_s \le \d'_{\ell-1} n$, but $N_i,N_j >\d_{\ell} n$. Either $j\in \{i,i+1\}$, and Phase I ends; or at the end of Iteration $\ell$, $I_{\ell +1}=\{i',...,j'\}$ where $(j'-i')<j-i$.

\vspace{0.1in} \noindent
{\bf Phase II analysis.}
At some step $t$ of Phase I, w.h.p. there are values $i,\;i+1$ such that $N_i(t)+N_{i+1}(t)=n(1-o(1))$. This is the beginning of Phase II.
We remark that the analysis of the disappearing opinions doesn't show that the  two remaining opinions have values  $i=\floor{c}$ and $i+1=\ceil{c}$ w.h.p. It only shows that w.h.p. there will be at most two substantial opinions remaining at the end of Phase I. The fact that the values of $i,i+1$ must be $\floor{c}$ and $\ceil{c}$ follows from an
appeal to Theorem \ref{ThA}.

In the simplest version of  pull voting on a  regular connected graph $G$, there are initially two values $0,1$ comprising sets of sizes $pn$ and $qn$, $q = 1-p$.
The probability $0$ wins is $p$, and the probability $1$ wins is $q$, since the size of opinion $0$ is 
an unbiased random walk on integer line. 
Unfortunately this analogy is not exact.
Let $i-1$ be the last opinion reduced in Phase I.
Consideration of \eqref{drift} shows that at this point there may be a tendency for $N_{i-1}$ to increase rather than decrease. To deal with this, we will stop Phase II (and start Phase III) when $\min (N_i, N_{i+1})\le n/k\om$.

Considering value $i-1$, and replacing $i$ by $i-1$ in \eqref{drift},
the positive drift 
$\d^+\le\om\d_{k-2}$ 
% $\d^+\le\om \max_{j \le i-2} \d_j$ 
may arise for example from $M_{i-2,i}$. 
If at some step $N_{i-1}/n \ge \om\d_{k-2}$, then as $N_{i+1} > n/k \om$, 
there are now a sufficient number of edges in $M_{i-1,i+1}$ for 
the drift to become again negative with value
\[
-\Om((\om \d_{k-2})(1/\om)) +O(\om\d_{k-3})=-\Om(\d_{k-2}).
\]
% for some $s>0$. 
Thus at the end of Phase II, 
w.h.p.\ $N_{i'} \ge n-n/\om$, and
$\sum_{j \ne i'} N_j \le n/\om$,
where $i'$ is $i$ or $i+1$.

%%%%%%%%%%%%%%%%%%%%%%%%%%%%Phase III%%%%%%%%%%%%%%%%%%%%%%%%%
\vspace{0.1in} \noindent

%\newpage
{\bf Phase III analysis.}
At the start of Phase III, there is one opinion $i$ for which $N_i=n(1-o(1))$.
Let $A=A_i$, and $B=\cup_{j \ne i} A_j$.

The process resembles a 'balls in boxes' system in which a vertex with value $j$ is a box with $j$ balls, and contents of the boxes change based on edges between the boxes.
When a vertex is selected, then 
one ball may be added to, or removed from, 
this vertex, depending on the number of balls in the selected neighbour.
When all vertices have the same value (the same number of balls), the process has ended. If an edge with values $(i,j), j \ne i$ is chosen at a given step we call this a Type 1 event. Choosing an edge 
with values $(j,j'), j \ne i\ne j'$ is a Type 2 event. We ignore $(i,i)$ and $(j,j)$ events.

We compare the process with 
an unbiased walk on this integers 
$\{0,1,...,L\}$ starting from
$z=W(0)=\sum_{j\ne i } |j-i|  N_j$, the weight of set $B$ at the start of Phase III w.r.t.\ value $i$,
which represents the distance between
the starting configuration and the target 
configuration when all vertices have value~$i$.
Thus initially $z\le k|B| \le n/\om$.
Each Type 1 event changes $z$ by $+1$ or
$-1$ with equal probability
(once an edge is selected, one of its 
end vertices is chosen for an update
with equal probability).
If only Type 1 events occurred,
then $z$ would be an unbiased random walk on $\{0,1,...,L\}$, and we put
% $L=N_i$ for the worst case.
$L = n$. 
(If $i$ does not win, then
we must have $W(t)=n$ for some $t$.)
For such a walk, value $i$ wins when the walk is absorbed at zero, and the probability of this is $1-z/L$. The expected duration  of the walk is $z(L-z)$.

% Let $z=W(0)=\sum_{j\ne i } |j-i|  N_j\le n/\om$ be the weight of set $B$ at the start of Phase III. 
For steps $t=0,1,...,$ as Phase III proceeds,  the value $W(t)$ will change due to both Type 1 and Type 2 events. The change due to Type 1 events is directly included in the random walk given above, and with probability $1-\om'z/n$ the walk  will not increase above
$\om' W(0)\le \om' n/\om$, where $\om' \rai$ but $\om'=o(\om)$.

Type 2 events are not represented directly by the random walk. Each  Type 2 occurrence  can change 
the value $W$ by $+1$ or $-1$. A Type 2 event on an edge $(j,j')$ where $j \ne j'$,  increases or decreases the number of balls in the system by one with equal probability. After $T$ events of Type 2, the additional change in $W$ due to this is
 $(+1)X+(-1) (T-X)$ where $X \sim Bin(T,1/2)$. Thus $X$ will not exceed $O( \sqrt {T \log T})$, w.h.p.

Only Type 1 moves can increase the size of $B$, whereas Type 2 moves will decrease it, if 
$j=i-1$ or $j'=i+1$. 
The w.h.p.\ maximum size of $B$ due to the Type 1 walk is $s=\om' n/\om$.
By Lemma \ref{Props}, property P3, w.h.p. no set size $s$ induces more than $O(s^2 p)$ edges, whereas by property P2, there are at least $nsp/3$ edges between $A_i$ and $B$. Thus the probability of a Type 2 event is at most $O(s/n)$. Thus the number of Type 2 events in the duration of the Type 1 random walk is  of order at most
$T=zL \frac{s}{n} \om'=O((n \om'/\om)^2)$, w.h.p.. 
The w.h.p. maximum increase in $W$ due to Type 2 events is $O( \sqrt {T \log T})=O(n \om' (\log n)^{1/2}/\om)$.

Provided $\om \ge \om'^2 (\log n)^{1/2}$ we can increase $W(0)$ to $z'=n/\om'$.
Put $\om=\log n$, $\om'=\log \log n$. 
Thus with probability $1-O(1/\om')$, at the end of Phase III $N_i=n$, as required.

\ignore{% latest ignored version
At the start of Phase III, there is one opinion $i$ for which $N_i=n(1-o(1))$.
Let $A=A_i$, and $B=\cup_{j \ne i} A_j$. We prove that  Phase III finishes with $N_i=n$ in $o(n^2)$ steps w.h.p.

The process resembles a 'balls in boxes' system in which a vertex with value $j$ is a box with $|j-i|$ balls, and contents of the boxes change based on edges between the boxes.
When a vertex is selected, then 
one ball may be added to, or removed from, 
this vertex, depending on the number of balls in the selected neighbour.
When all vertices have the same value (the same number of balls), the process has ended. If an edge with values $(i,j), j \ne i$ is chosen at a given step we call this a Type 1 event. Choosing an edge 
with values $(j,j'), j \ne i\ne j'$ is a Type 2 event. We ignore $(i,i)$ and $(j,j)$ events.
Only Type 1 moves can increase the size of $B$, whereas Type 2 moves can decrease it in some cases,  e.g.,
if $j=i-1$ and $j'=i+1$.  

We compare the process with 
an unbiased walk on the integers 
$\{0,1,...,L\}$ starting from
$z=W(0)=\sum_{j\ne i } |j-i|  N_j$, the weight difference of set $B$ w.r.t.\ value $i$ at the start of Phase III.
Thus $z$ represents the distance between
the starting configuration and the target 
configuration where all vertices have value~$i$.
Thus initially $z\le k|B| \le n/\om$.

Each Type 1 event changes $z$ by $+1$ or
$-1$ with equal probability
(once an edge is selected, one of its 
end vertices is chosen for an update
with equal probability).

{\red 
For Type 2 events where $j<i<j'$ the walk takes the value $-1$, as either $j \ra j+1$ or $j' \ra j'-1$. If $j<j'<i$ or $i<j<j'$
the number of balls in the system increases or decreases  by one with equal probability, and thus the associated walk is unbiased.

Consider an 
unbiased random walk on $\{0,1,...,L\}$, starting from $z$. We put
$L = n$. 
(If $i$ does not win, then
we must have $W(t)=n$ for some $t$.)
For such a walk, value $i$ wins when the walk is absorbed at zero, and the probability of this is $1-z/L$. The expected duration  of the  walk is $z(L-z)$.
Provided $\om' =o(\om)$,  with probability $1-1/\om'$, the walk completes in 
\[
\om' z(L-z) \le \om' \frac {n}{\om} n =o(n^2)
\]
steps.}

{\blue Unfortunately its not that simple. To be added to.}

} % 
\ignore{Previous version
If only Type 1 events occurred,
then $z$ would be an unbiased random walk on $\{0,1,...,L\}$, and we put
% $L=N_i$ for the worst case.
$L = n$. 
(If $i$ does not win, then
we must have $W(t)=n$ for some $t$.)
For such a walk, value $i$ wins when the walk is absorbed at zero, and the probability of this is $1-z/L$. The expected duration  of the Type 1 walk is $z(L-z)$.

% Let $z=W(0)=\sum_{j\ne i } |j-i|  N_j\le n/\om$ be the weight of set $B$ at the start of Phase III. 
For steps $t=0,1,...,$ as Phase III proceeds,  the value $W(t)$ will change due to both Type 1 and Type 2 events. The change due to Type 1 events is directly included in the random walk given above, and with probability $1-\om'z/n$ the walk  will not increase above
$\om' W(0)\le \om' n/\om=o(n)$, where $\om' \rai$ but $\om'=o(\om)$. Thus the w.h.p.\ maximum size of $B$ due to the Type 1 walk is $s=\om' n/\om$.

Type 2 events are not represented  by the Type 1 random walk. Type 2 events where $j<i<j'$ can only decrease $W$.
Type 2 events on an edge $(j,j')$ where e.g., $i< j < j'$,   increases or decreases the number of balls in the system by one with equal probability. After $T$ events of Type 2, the additional change $C$ in $W$ due to this is at most
 $(+1)X+(-1) (T-X)$ where $X \sim Bin(T,1/2)$. Thus $C$ will not exceed $O( \sqrt {T \log T})$, w.h.p.

The w.h.p.\ maximum size of $B$ due to the Type 1 walk is $s=\om' n/\om$.
By Lemma \ref{Props}, property P3, w.h.p. no set size $s$ induces more than $O(s^2 p)$ edges, whereas by property P2, there are at least $nsp/3$ edges between $A_i$ and $B$. Thus the probability of a Type 2 event is at most $O(s/n)$. Thus the number of Type 2 events in the duration of the Type 1 random walk is  of order at most
$T=zL \frac{s}{n} \om'=O((n \om'/\om)^2)$, w.h.p.. 
The w.h.p. maximum increase in $W$ due to Type 2 events is $O( \sqrt {T \log T})=O(n \om' (\log n)^{1/2}/\om)$.

Provided $\om \ge \om'^2 (\log n)^{1/2}$ we can increase $W(0)$ to $z'=n/\om'$.
Put $\om=\log n$, $\om'=\log \log n$. 
Thus with probability $1-O(1/\om')$, at the end of Phase III $N_i=n$, as required.
}

\ignore{%%%%%%%%%%what was here before
The process resembles a 'balls in boxes' system in which a vertex with value $j$ is a box with $j$ balls, and contents of the boxes change based on edges between the boxes.
When all vertices have the same value, the process has ended.

If an edge with values $(i,j), j \ne i$ is chosen at a given step we call this a Type 1 event. Choosing an edge $(j,j'), j,j' \ne i$ is a Type 2 event. We ignore $(i,i)$ and $(j,j)$ events.

We would like to compare the process with a unbiased walk on the integers $\{0,1,...,L\}$ starting from $z$.
If only Type 1 events occur, in worst case, we can put $z=k|B|$ and $L=N_i$.
For such a walk, the probability $N_i$ wins is $1-z/L$. The expected duration  of the walk is $z(L-z)$.

Let $W(0)=\sum_{j \ne i} j N_j\le n/\om$ be the weight of set $B$ at the start of Phase III. For steps $t=0,1,...$ as Phase III proceeds,  the value $W(t)$ will change due to Type 1 and Type 2 events. The change due to Type 1 events is directly included in the random walk given above, and with probability $1-\om'z/n$ the walk  will not increase above
$\om' W(0)$, where $\om' \rai$ but $\om'=o(\om)$.

Type 2 events are not represented directly by the random walk. Each  Type 2 occurrence  can change the size of $W$ by $+1$ or $-1$.

To include Type 2 events we put $z=\l + k s$ where $s=n/\om \ge |B(0)|$, and $\l$  (yet to be determined) is sufficiently large to include the w.h.p. maximum change in the size of $W$ due to Type 2 events occurring during the w.h.p. maximum duration of the Type 1 walk.

A Type 2 event on an edge $(j,j')$ where $j \ne j'$,  increases or decreases the number of balls in the system by one with equal probability. After $T$ events of Type 2, the additional change in $W$ due to this is
 $(+1)X+(-1) (T-X)$ where $X \sim Bin(T,1/2)$. Thus $X$ will not exceed $O( \sqrt {T \log T})$, w.h.p.

By Lemma \ref{Props}, property P2, w.h.p. no set size $s$ induces more than $O(s^2 p)$ edges, whereas by property P1, there are at least $nsp/3$ edges between $A_i$ and $B$. Thus the probability of a Type 2 event is at most $cs/n$. Thus the number of Type 2 events in the duration of the random walk is  at most
$T=zL \frac{cs}{n} \om'$, w.h.p.. The w.h.p. maximum increase in $W$ is $O( \sqrt {T \log T})$.

We need $z\le n/\om'=o(n)$, for some $\om' \rai $ suitably slowly, so that the probability  $B$ wins is $z/L =O(z/n)=o(1)$.
And also $O( \sqrt {T \log T}) \ll z=(\l +k s)$, where $T\le zL \frac{cs}{n} \om'$. This occurs if e.g., $\l = s \om'^2 \log n$. On the other hand, the constraint $z=(\l + k s)\le n/\om'$,
(where $s\le n/\om$), will hold if $\l < n/(2 \om')$, i.e., $\om \gg \om'^3 \log n$. Equating $\om \log \om =np$ from property P2,  we find that provided $np \ge \log^{1+\th} n$ for some $\th>0$ constant, for $n$ sufficiently large, we can put $\om'=\log \log n$ to satisfy these constraints.

Thus w.h.p. at the end of Phase III, $N_i=n$ as required.
}%%%%%%%%%%%%%%%%%%%%%%%%%%%%%%%%%%%%%

\section{Asynchronous incremental voting on the line: 
proof of Theorem~\ref{TheLine-intro}} \label{Path}

To indicate that Theorems~\ref{ThA} and \ref{ThGnp-intro} 
% and so on 
do not hold for general graphs, we consider the following specific example of
% which we will refer to as 
an {\em ordered path}.
The graph is a path with $n$ vertices $\{1,2, \dots, n\}$. There are three opinions $0, 1, 2$ and initially they are ordered along the path:
vertices $\{1, \ldots, i_0\}$ have opinion $0$,
vertices $\{i_0+1, \ldots, n-j_0\}$ have opinion $1$, and
vertices $\{n-j_0+1, \ldots, n\}$ have opinion~$2$. Thus, the $0 \le i_0 \le n$ vertices in the initial segment of the path have opinion $0$,
the $0\le j_0\le n-i_0$ vertices in the final segment of the path have opinion~$2$, and the remaining $n-(i_0+j_0)$ vertices
in the middle of the path have opinion $1$.

We show that the probability that
opinion $0$ wins is equal to $a(1-b)$, where $a = i_0/n$ and $b = j_0/n$.
By symmetry, the probability that opinion $2$ wins is equal to $(1-a)b$, leaving the probability of $ab + (1-a)(1-b)$ for opinion $1$ to win.

% \begin{theorem}[\cref{TheLine-intro}]
% \label{TheLine} 
% \TheoremLine
% \end{theorem}

%The remaining part of this section is the proof of the above theorem.
% \begin{proof}
The non-decreasing order of the opinions along the path is an invariant of the process, so each intermediate configuration is characterised by the number  $i=N_0$  of vertices
 at the beginning of the path with opinion $0$, and the number  $j=N_2$ of  vertices at the end of the path with opinion $2$, 
 where $0 \le i \le n$, $0\le j \le n-i$. The process ends when opinion $0$ wins ($i$ becomes $n$), or opinion $2$ wins ($j$ becomes $n$), or opinion $1$ wins (both $i$ and $j$ become~$0$).  
 
The process of changing from one configuration to the next one is a random walk on the integral points of the triangle
$i \ge 0, j \ge 0, i+j \le n$;
see Figure~\ref{fig:triangle}.
Considering only the steps when the configuration changes, a configuration $(i,j)$ which is strictly inside this triangle (that is, $i > 0, j>0, i+j < n$)
changes to any of the four configurations
$(i+1,j)$, $(i-1,j)$, $(i, j+1)$ and $(i,j-1)$ with equal probability of $1/4$.
Indeed, configuration $(i,j)$ changes when
either edge $(i,i+1)$ (with opinions $0$ and $1$) or edge $(n-j,n-j+1)$
(with opinions $1$ and $2$)
is selected (equal probability).
If edge $(i,i+1)$ is selected, then the configuration changes to $(i+1,j)$ or $(i-1,j)$, depending which vertex $i$ or
$i+1$ updates its opinion. With equal probability, either vertex $i+1$ decreases its opinion from $1$ to $0$, or
vertex $i$ increases its opinion from $0$ to $1$.
Analogously when edge $(n-j,n-j+1)$ is selected.

\begin{figure}
    % \centering
    \begin{subfigure}[b]{0.49\textwidth}
         \centering \includegraphics[width=0.9\textwidth]{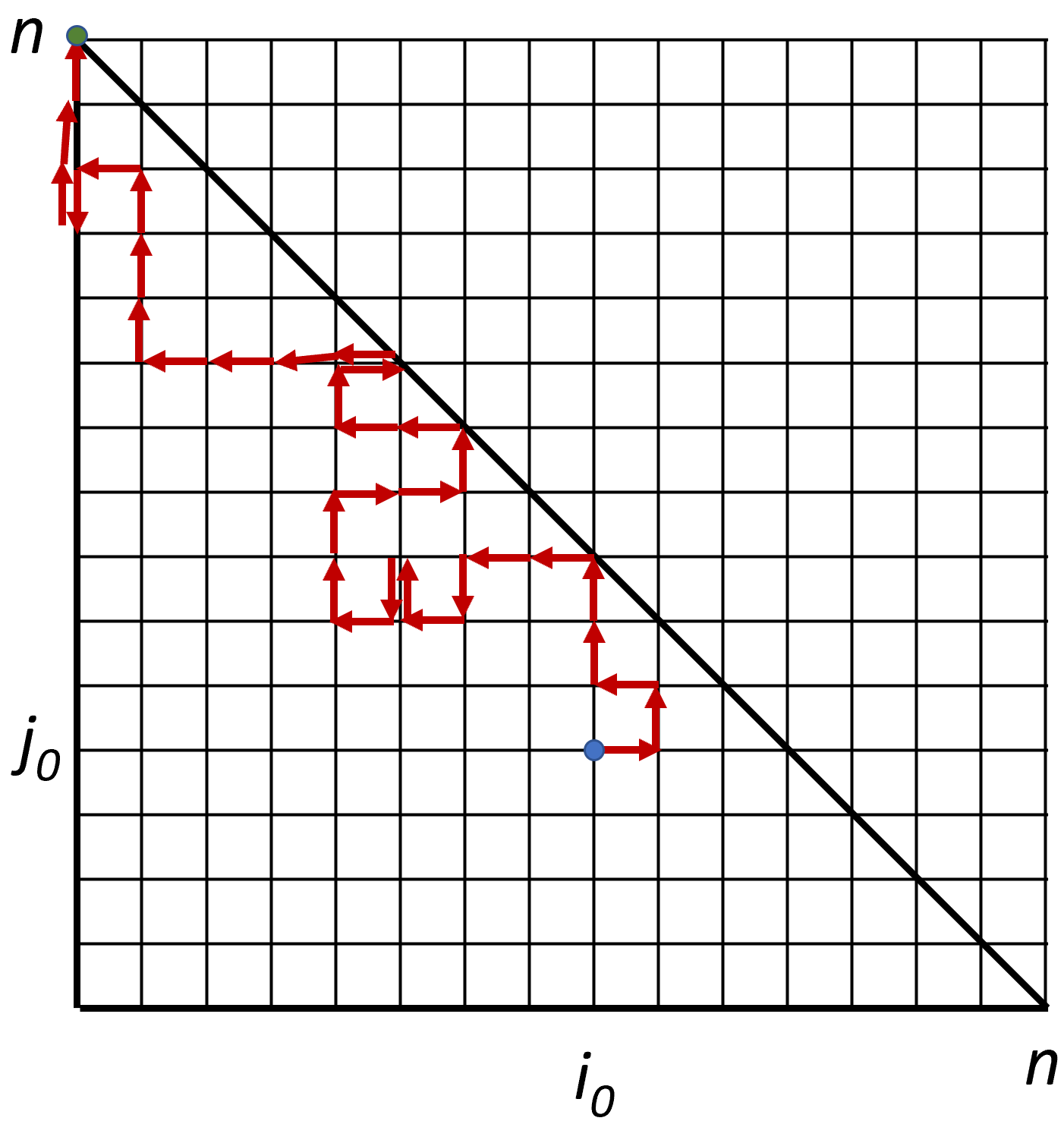}
        \caption{Random walk on the triangle grid.}
         \label{fig:triangle}
    \end{subfigure}
    % \hfill
    \begin{subfigure}[b]{0.49\textwidth}
         \centering     \includegraphics[width=0.9\textwidth]{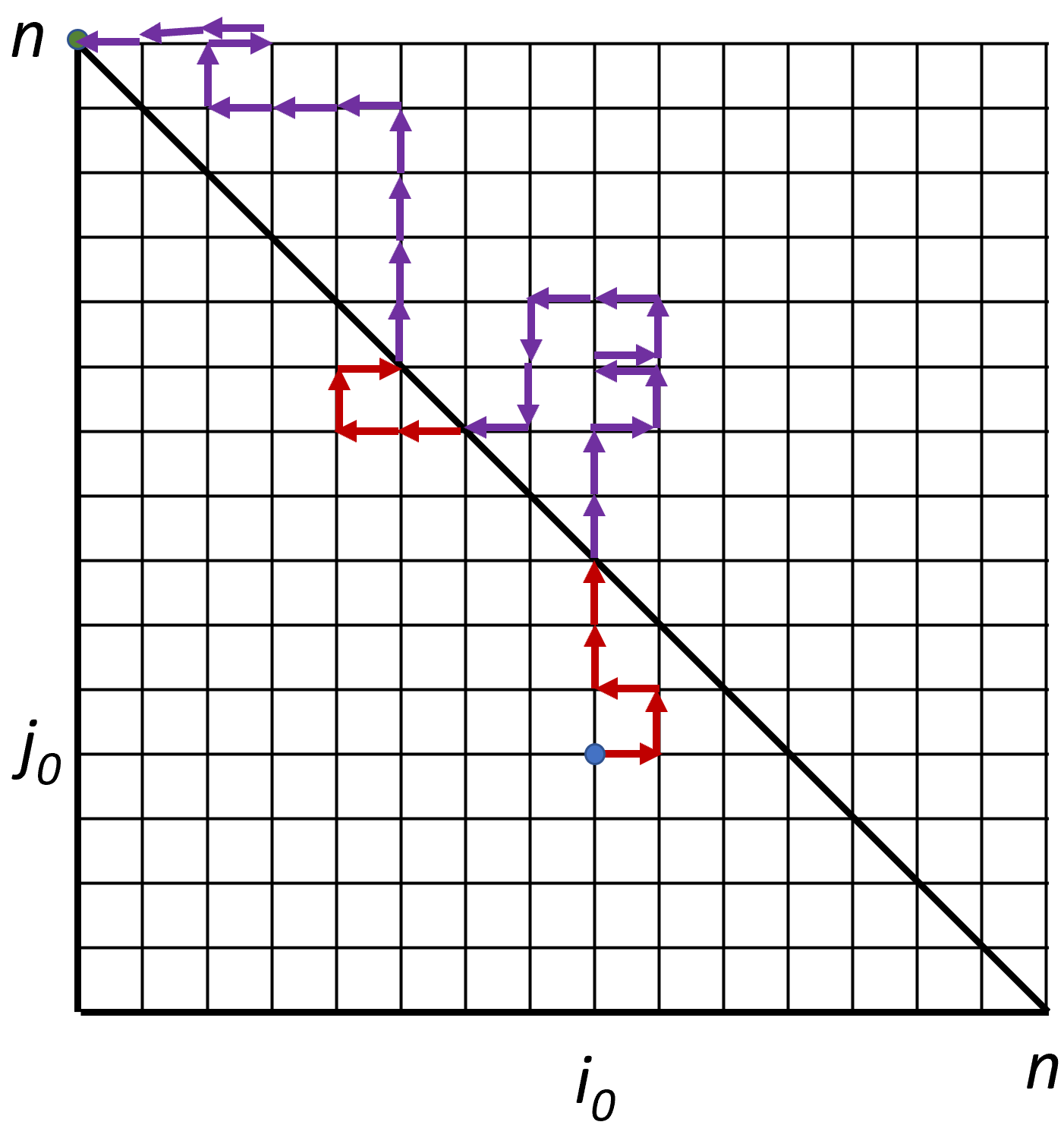}
         \caption{Random walk on the square grid.}
        \label{fig:square}
    \end{subfigure}
    \caption{The random walk on the square grid (right diagram) and the corresponding walk on the triangle grid (left diagram).
    The random walk of the triangle grid
    defines the evolution of the incremental voting on the ordered path. In this example, the averaging process starts with $i_0$ vertices with opinion $0$ and $j_0$ vertices with opinion $2$, and stabilises
    with all vertices having opinion~$2$.}
    \label{Fig:RW_for_orderedPath}
\end{figure}

From a configuration $(0,j)$, where $0<j<n$, we have
equally probable transitions to configuration $(0,j+1)$ or $(0,j-1)$.
Analogously, a configuration $(i,0)$,
where $0<i<n$, transitions to $(i+1,0)$ or
$(i-1,0)$ with equal probability.

Finally, consider the {\em diagonal} configurations lying on the side of the triangle formed by the line segment from $(0,n)$ to $(n,0)$.
For a
non-final configuration $(i,j)$: $i+j = n$, $i>0$, $j>0$,
the vertex $i$ has opinion $0$ and vertex $i+1$ has opinion $2$.
The configuration changes when the unique edge $(i,i+1)$ is selected.
In this case the configuration transitions
to $(i-1,j)$, when vertex $i$ increases its opinion from $0$ to $1$, or to $(i,j-1)$,
when vertex $i+1$ decreases its opinion from $2$ to $1$ (equal probability for either of these two transitions).

For convenience, we view this random walk $W$ on the triangle as a random walk $W'$ on the full square $0\le i \le n, 0\le j \le n$, unifying the pairs of states $(i,j)$ and $(n-j, n-i)$, these being identical on the diagonal of the triangle.
See Figure~\ref{Fig:RW_for_orderedPath},
where the right diagram gives an example of
the random walk $W'$ on the square grid, and
the left diagram shows the corresponding walk
on the triangle.
The transition probabilities for walk $W'$ are the same as for $W$ for all
non-diagonal states $(i,j)$. For such a state, if it is not on the boundary of the square, then one of the coordinates increases or decreases by $1$, with all four possibilities equally probable. For a state on the boundary of the square, the configuration changes, with equal probability, to one of the two neighbouring boundary states.

For a diagonal non-final state $(i,j)$,
the random walk $W'$ moves also to any of the four neighbouring states
$(i+1,j)$, $(i-1,j)$, $(i, j+1)$ or $(i,j-1)$, with equal probability. In this case, the transition of $W'$ with probability $1/2$ to either
$(i-1,j)$ or $(i,j+1)$ corresponds to random walk $W$ moving with probability $1/2$ from configuration $(i,j)$
to configuration $(i-1,j)$. Thus
the pair of states $(i-1,j)$ and $(i,j+1)$ in $W'$ correspond to the configuration $(i-1,j)$ in $W$.
The below diagonal state $(i,j)$ and above diagonal state $(n-j,n-i)$ in the square, both correspond to state $(i,j)$  in the triangle.

Thus our incremental voting on the path
corresponds to the random walk $W'$ on the square grid. Inside the square the walk transitions with equal probability from one state to any of the four neighbouring states. A transition on one coordinate is completely independent of the value of the other coordinate.
When the walk hits a side of the square, this corresponds to one of the two extreme values 0 or 2 being eliminated.
The walk then remains within this side of the square, moving independently to one of the two neighbouring boundary states. The final absorbing states are the four corners of the square. State $(n,0)$
corresponds to opinion $0$ winning, state
$(0,n)$ corresponds to opinion $2$ winning,
and states $(0,0)$ and $(n,n)$
(corresponding to state $(0,0)$ in the triangle) correspond to
opinion $1$ winning.

What is the probability that the random walk $W'$ terminates in the state $(n,0)$, meaning the win for opinion $0$?
We generate the two-dimensional random walk $W'$ from two independent one-dimensional walks, one walk for each of the two coordinates, both walks with the range
$\{0,1,\ldots,n\}$.
To move walk $W'$, we take, with equal probability, the next step from one of the two one-dimensional walks.
Walk $W'$ ends in the state $(n,0)$ if, and only if, the one-dimensional walk for the coordinate $i$ ends in state $n$ and
the one-dimensional walk for the coordinate $j$ ends in state $0$.
Indeed, for the 'if' part, if
the one dimensional random walks for coordinates
$i$ and $j$ end in states $n$ and $0$,
respectively, then walk $W'$ must end in the state $(n,0)$,
For the 'only if' part,
if walk $W'$ ends in $(n,0)$, then the
walk for coordinate $i$ cannot end in $0$.
Otherwise, if the
walk for coordinate $i$ ended
in $0$, then walk $W'$ would reach a state
$(0,y)$, for $0 < y < n$, and then end in either $(0,0)$ or $(0,n)$, or would reach a state $(x,0)$, for $0<x<n$ and then end
in $(0,0)$, or would reach a state
$(x,n)$, for $0<x<n$, and then end in
$(0,n)$.
Analogously,
if walk $W'$ ends in $(n,0)$, then the
walk for coordinate $j$ cannot end in $n$.

For an unbiased random walk on
$\{0,1,\ldots,n\}$ staring at position $X$,
the probability that the walk ends in the state $0$ is equal to $(n-X)/n$.
The one-dimensional random walks for the coordinates $i$ and $j$ start at positions
$i_0$ and $j_0$, respectively.
Thus the probability that the first walk
ends in $n$ is equal to $i_0/n = a$ and the probability that the second walk ends in $0$
is equal to $(n-j_0)/n = 1-b$.
%
% It follows that, if the initial numbers of vertices of opinions $0,1,2$ are $an, n-(an+bn),bn$, respectively, then
% The probabilities of winning for different
% opinions are therefore as given in Theorem~\ref{TheLine-intro}.
% \begin{eqnarray*}
% \Pr(\text{Opinion 0 wins})&=&a(1-b)\\
% \Pr(\text{Opinion 1 wins})&=& ab+(1-a)(1-b)\\
% \Pr(\text{Opinion 2 wins})&=&(1-a)b.
% \end{eqnarray*}
% \end{proof}

% Finally we note that the expected average resulting from the probabilities  given in Theorem \ref{TheLine-intro} is
% $c$, the initial average,  as asserted in \eqref{Probs}. Indeed
% \begin{eqnarray*}
% \lefteqn{0\cdot a(1-b)+1\cdot (ab+(1-a)(1-b)+2\cdot (1-a)b} \hspace{5em}\\
% & = & 1-a+b \\
% & = & 0\cdot a+1\cdot (1-(a+b))+2\cdot b
% \; = \; c.
% \end{eqnarray*}

%\newpage
\section{
\texorpdfstring
{Synchronous incremental voting on $K_n$: \cref{ThKnSync}}
{Synchronous incremental voting on Kn.}
}
\label{KnSync}
In this section, we show \cref{ThKnSync}, which
refers to
the synchronous process on 
the complete graph $K_n$.
To begin with, we sketch the outline of the proof.
% Let $V$ be the set of $n$ vertices, with
% each vertex holding an opinion from $[k]=\{1,2,...,k\}$.
%
%{\blue At each discrete synchronous time step, each vertex $v$ chooses a neighbour $w \ne v$ independently and  uniformly at random }
{At each discrete 
% synchronous 
time step, each vertex $v$ chooses a {vertex $w$} independently and  uniformly at random},
and updates its 
opinion $X_v$ to $X'_v$ 
% using the update rule
as in~\eqref{Tab}.
%\begin{align}
 %   X'_v=
%    \begin{cases}
 %   X_v+1 & (\textrm{if $X_v<X_u$})\\
  %  X_v & (\textrm{if $X_v=X_u$})\\
  %  X_v-1 & (\textrm{if $X_v>X_u$})
   % \end{cases}.
   % \label{eq:def}
%\end{align}
%\section{Basic properties}
% As before, let $X_v(t)\in [k]$ denote the opinion of vertex $v\in V$ at step $t\geq 0$ and let $X(t)=(X_v(t))_{v\in V}$.
We are interested in the evolution of $(X(t))_{t\ge 0}$. 
% Let $A_i(t) = \{v\in V: X_v(t) =i\}$ be the set of vertices holding opinion $i\in [k]$ at time $t$ and $N_i(t)=|A_i(t)|$ be its size.
% We sometimes simplify the notation and use $N_i$ and $N_i'$ instead of $N_i(t)$ and $N_i(t+1)$, respectively.

% \begin{theorem}[\cref{ThKnSync}]
% \label{ThKnSync-repeat}
% \TheoremKnSync
% \end{theorem} 

% \begin{theorem}\label{ThKnSync-repeat}
% {\sc Synchronous incremental voting on $K_n$}. \;\;
% Let the initial opinions be chosen from $\{1,2,\dots,k\}$, where $k=o(n/(\log n)^2)$.
% Let $S(0)= \sum_{v\in V}X_v(0)=cn$, for some $1 \le c \le k$.  If $i < c < i+1$, then\/ $\Pr(i \text{ wins}) \sim i+1-c$ and\/ $\Pr(i+1 \text{ wins}) \sim c-i$. However, if $c =i (1+o(1))$ then $\Pr(i \text{ wins}) \sim 1$.  The expected time for the synchronous process to finish is $O(n)$. 
% \end{theorem}

% We begin with a brief sketch of the proof.
Firstly, we show that the smallest opinion $s=\min_{v\in V}X_v(0)$ or the largest opinion $\ell=\max_{v\in V}X_v(0)$ vanishes w.h.p.~within $O(\log n)$ steps, while $s+3\leq \ell$ (\cref{lem:extreme}).
Hence, after the smallest or largest opinion disappears $k-3$ times, which occurs w.h.p.\ in $T=O(k\log n)=o(n/\log n)$ steps, at most three consecutive opinions $\{i-1,i, i+1\}$ are left. 
% At this point, 
Using a Martingale concentration argument
(\cref{lem:sum_deviation}) 
we further show that w.h.p.\ 
% $S(T)=
% \sum_{v\in V}X_v(T) = 
% S(0)\pm O(\sqrt{nT\log n})$. 
% Because the initial values are in $\{1,2,...,k\}$, it must be that $S(0) \ge n$ and
% Consequently, 
$|S(T)-S(0)| = O(\sqrt{nT\log n})=o(n)$. 

At this point only three adjacent values $\{i-1,i, i+1\}$ remain.
In  \cref{lem:threeleft}, 
we next either reduce the number of remaining opinions to two
consecutive opinions, 
or if not, and we still have three opinions, 
then the sizes of opinions 
$i-1$ and $i+1$ are $o(n)$.   
This reduction takes $o(n)$ steps w.h.p., so we still have 
$|S(T)-S(0)|=o(n)$.
In either case, the next and final phase completes in $O(n)$ expected steps, by comparison with pull voting.
The comparison is straightforward,
if only two consecutive opinions
$i$ and $i+1$ remain.
When there are three opinions 
$i-1,i,i+1$, where $|A_{i-1} \cup A_{i+1}|=o(n)$, %O(n^{5/6})$. 
then $S(t)/n \sim i$, and we prove that w.h.p.\ $i$ wins by coupling the process with pull voting.

%Let $A = A_i$ and $B=A_{i-1} \cup A_{i+1}$. In pull voting value $i$ wins with probability $|A|/n=1-o(1)$. After the first step of synchronous pull voting, $|A_P'|=Bin(n,|A|/n)$.
%There is a coupling between incremental voting on three values, and pull voting such that
%$|A_I'|$ stochastically dominates $|A_P'|$. Firstly the number of vertices which choose in $A$ directly is $Bin(n, |A|/n)$.  Denote this set by $A'=A'_P$ and let $|A_P'|=X_P$. Given the set $A_P'$, a further non-negative number $Y_I$ of vertices take the value $i$ indirectly. This number $Y_I$ is a sum of two binomials:$Y_I=Bin(|A_{i+1}\setminus A_P'|, N_{i-1}/n) + Bin(|A_{i-1}\setminus A_P'|, N_{i+1}/n).$
%We have $A'_P \subseteq A'_I$, and the coupling can be extendedto subsequent steps. 
%Thus the probability that $i$ wins in incremental voting is at least the probability that $i$ wins in pull voting, so $1-o(1)$.

%\section{
%\texorpdfstring
%{Deferred Proof for synchronous vertex process on $K_n$.}
%{Deferred Proof for synchronous vertex process on Kn.}
%}
%\label{Sec:Knsync}

\subsection{Many opinions case}
First, we show that one of the extreme opinions disappears within $O(\log n)$ steps.
\begin{lemma}
\label{lem:extreme}
Let $s= \min_{v\in V}X_v(0)$ and $\ell= \max_{v\in V}X_v(0)$ be the smallest and the largest opinions in the  initial round, respectively.
Suppose $\ell\geq s+3$.
Then, $N_s(T)N_\ell(T)=0$ w.h.p.~within $T=O(\log n)$ steps.
\end{lemma}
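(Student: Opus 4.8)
The plan is to control the product $Y_t := N_s(t)\,N_\ell(t)$ and show that it is a strong supermartingale which contracts by a constant factor at every step, so that $\E[Y_T] = o(1)$ already for $T = O(\log n)$. Markov's inequality then gives $\Pr(Y_T \ge 1) = o(1)$, and since $Y_T = 0$ is exactly the event $N_s(T)\,N_\ell(T) = 0$, the lemma follows. No martingale concentration inequality is needed; everything reduces to one elementary one‑step computation.

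\textbf{One-step expectations and conditional independence.} A vertex holds opinion $s$ after a round only if it held opinion $s$ and sampled another opinion-$s$ vertex (so it stays), or held opinion $s+1$ and sampled an opinion-$s$ vertex (so it drops to $s$); a vertex of opinion $\ge s+2$ cannot reach $s$ in one round. Hence
\[
\E[N_s(t+1)\mid X(t)] \;=\; \frac{N_s(t)}{n-1}\bigl(N_s(t)-1+N_{s+1}(t)\bigr) \;=:\; \alpha_t\,N_s(t),
\]
and symmetrically $\E[N_\ell(t+1)\mid X(t)] = \beta_t\,N_\ell(t)$ with $\beta_t = \bigl(N_\ell(t)-1+N_{\ell-1}(t)\bigr)/(n-1)$. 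Because $\ell \ge s+3$, the sets $\{s,s+1\}$ and $\{\ell-1,\ell\}$ are disjoint, so $N_s(t+1)$ is a function only of the samples drawn by the vertices of $A_s(t)\cup A_{s+1}(t)$, while $N_\ell(t+1)$ is a function only of the samples of the disjoint set $A_{\ell-1}(t)\cup A_\ell(t)$. In the synchronous process these samples are mutually independent given $X(t)$, so $N_s(t+1)$ and $N_\ell(t+1)$ are conditionally independent given $X(t)$, and therefore $\E[Y_{t+1}\mid X(t)] = \alpha_t\beta_t\,Y_t$.

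\textbf{The contraction bound.} The four sets $A_s(t),A_{s+1}(t),A_{\ell-1}(t),A_\ell(t)$ are pairwise disjoint subsets of $V$, so $N_s(t)+N_{s+1}(t)+N_{\ell-1}(t)+N_\ell(t)\le n$, whence $\alpha_t+\beta_t \le (n-2)/(n-1) < 1$, and by AM--GM $\alpha_t\beta_t < 1/4$. Thus $\E[Y_{t+1}\mid X(t)] < \tfrac14\,Y_t$ for every reachable configuration, so $\E[Y_t] < 4^{-t}\,Y_0 \le 4^{-t}\,n^2$. Choosing $T = \ceil{3\log_4 n} = O(\log n)$ gives $\E[Y_T] \le 1/n$, hence $\Pr\bigl(N_s(T)N_\ell(T)\ge 1\bigr) = \Pr(Y_T\ge 1)\le \E[Y_T] = o(1)$, which is the statement. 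One may additionally observe (though it is not needed) that the global minimum opinion is non-decreasing and the global maximum non-increasing along the process, so once $N_s$ or $N_\ell$ hits $0$ it stays $0$, i.e. $Y_t$ is genuinely absorbed at $0$.

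\textbf{Where the work is.} The only delicate point is the conditional-independence claim: one must check that $N_s(t+1)$ depends on the samples of the vertices in $A_s(t)\cup A_{s+1}(t)$ only, and that the corresponding set for $N_\ell(t+1)$ is disjoint from it — this is exactly where the hypothesis $\ell \ge s+3$ is used. If those two ``active'' vertex sets could overlap, the factorisation $\E[Y_{t+1}\mid X(t)] = \alpha_t\beta_t Y_t$ and hence the constant-factor contraction would fail, and the argument would need a different (and weaker) estimate.
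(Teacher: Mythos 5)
Your proposal is correct and follows essentially the same route as the paper: both establish that $N_s(t+1)$ and $N_\ell(t+1)$ are conditionally independent given $X(t)$ (using $\ell\ge s+3$ so that the relevant opinion classes $\{s,s+1\}$ and $\{\ell-1,\ell\}$ are disjoint), deduce $\E[N_s(t+1)N_\ell(t+1)\mid X(t)]\le \tfrac14 N_s(t)N_\ell(t)$ from $x(1-x)\le 1/4$, and finish with Markov's inequality at $T=O(\log n)$. Your slightly more careful bookkeeping with $n-1$ and $N_s-1$ (excluding self-sampling) changes nothing material.
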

Applying \cref{lem:extreme} repeatedly, we immediately have the following.

\begin{theorem}\label{thm:SyncKn}
From any initial configuration of opinions from $[k]=\{1,2,\ldots,k\}$,
$X_v(T)\in \{i-1,i,i+1\}$ holds for some $1<i<k$ and for any $v\in V$ within $T=O(k \log n)$ steps w.h.p.
\end{theorem}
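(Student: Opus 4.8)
The theorem follows from \cref{lem:extreme} by iteration, so the plan is: (a) prove \cref{lem:extreme}, which is the substantive part, and (b) iterate it with a union bound. For (a), fix a step $t$ and write $s=\min_v X_v(t)$, $\ell=\max_v X_v(t)$ for the current extreme opinions, abbreviating $N_j=N_j(t)$. Since $s$ is the current minimum, a vertex with opinion $s$ can only stay at $s$ or move to $s+1$ in one synchronous round, while a vertex with opinion $\ge s+2$ cannot land at $s$; hence the only vertices that can hold opinion $s$ after the round are those currently in $A_s\cup A_{s+1}$. Symmetrically, only vertices in $A_{\ell-1}\cup A_\ell$ can hold opinion $\ell$ after the round. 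The hypothesis $\ell\ge s+3$ is exactly what makes $\{s,s+1\}\cap\{\ell-1,\ell\}=\emptyset$, so the two relevant vertex sets $A_s\cup A_{s+1}$ and $A_{\ell-1}\cup A_\ell$ are disjoint; since in the synchronous process distinct vertices sample independently, $N_s(t+1)$ and $N_\ell(t+1)$ are conditionally independent given $X(t)$.

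The potential to track is $\Phi(t)=N_s(t)\,N_\ell(t)$. A vertex of $A_s$ stays at $s$ only if it samples a vertex of opinion $s$, and a vertex of $A_{s+1}$ moves to $s$ only if it samples a vertex of opinion $s$; hence $\E[N_s(t+1)\mid X(t)]\le N_s(N_s+N_{s+1})/(n-1)$, and symmetrically $\E[N_\ell(t+1)\mid X(t)]\le N_\ell(N_{\ell-1}+N_\ell)/(n-1)$. Combining these with the conditional independence above,
\[
\E[\Phi(t+1)\mid X(t)]\;\le\;\Phi(t)\cdot\frac{(N_s+N_{s+1})(N_{\ell-1}+N_\ell)}{(n-1)^2}.
\]
Now invoke $\ell\ge s+3$ again: the four opinion classes $A_s,A_{s+1},A_{\ell-1},A_\ell$ are pairwise distinct, so $(N_s+N_{s+1})+(N_{\ell-1}+N_\ell)\le n$, whence by AM--GM $(N_s+N_{s+1})(N_{\ell-1}+N_\ell)\le n^2/4$. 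Thus $\E[\Phi(t+1)\mid X(t)]\le\tfrac13\Phi(t)$ for $n$ large, valid whenever $\Phi(t)>0$, i.e.\ on the event $\{\tau>t\}$, where $\tau$ is the first step at which one of $N_s,N_\ell$ hits $0$ (on that event $s,\ell$ are still the extreme opinions, so the above applies). Telescoping the drift bound, $\E[\Phi(t)\ind{\tau>t}]\le 3^{-t}\Phi(0)\le 3^{-t}n^2$, so $\Pr[\tau>t]\le 3^{-t}n^2$ by Markov. Taking $t=T=\Theta(\log n)$ makes this $o(1)$ --- indeed $n^{-\Omega(1)}$, with exponent as large as we wish by enlarging the constant --- and since an eliminated extreme opinion never reappears (the minimum opinion is non-decreasing, the maximum non-increasing), $N_s(t)N_\ell(t)=0$ for all $t\ge\tau$. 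This is \cref{lem:extreme}.

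For the theorem, iterate. From any configuration with opinions in $[k]$ we have $\ell-s\le k-1$. While $\ell-s\ge 3$, apply \cref{lem:extreme} to the current configuration (legitimate by the Markov property): within $O(\log n)$ steps, w.h.p., one of the two current extreme opinions vanishes, which decreases $\ell-s$ by at least $1$. Hence after at most $k-3$ such rounds we reach $\ell-s\le 2$, i.e.\ all surviving opinions lie in a window $\{i-1,i,i+1\}$, and the total number of steps is $O(k\log n)$. For the probability bound, choose the constant in each round's budget $T=\Theta(\log n)$ large enough that a single round fails with probability at most $n^{-2}$; a union bound over the $\le k-3$ rounds then gives overall failure probability $O(kn^{-2})=o(1)$, using that $k$ is at most polynomial in $n$ (in \cref{ThKnSync}, $k=o(n/\log^2 n)$).

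The one genuinely delicate point is in the first two paragraphs. Tracking $N_s$ (or $N_\ell$) by itself is too weak, since its drift factor $(N_s+N_{s+1})/n$ degenerates to $\approx 1$ precisely when almost all vertices sit at $\{s,s+1\}$. The fix is the \emph{product} $N_sN_\ell$, whose contraction by a constant factor rests on two facts that both consume the hypothesis $\ell\ge s+3$: the conditional independence of $N_s(t+1)$ and $N_\ell(t+1)$ (disjointness of $\{s,s+1\}$ and $\{\ell-1,\ell\}$), and the bound $(N_s+N_{s+1})(N_{\ell-1}+N_\ell)\le n^2/4$ (four distinct opinion classes). Once these are in hand, the drift computation, the telescoping, and the union bound over rounds are all routine.
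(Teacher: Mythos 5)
Your proposal is correct and follows essentially the same route as the paper: the paper's proof of \cref{lem:extreme} likewise tracks the product $N_sN_\ell$, notes that $N_s'\sim Bin(N_s+N_{s+1},N_s/n)$ and $N_\ell'\sim Bin(N_{\ell-1}+N_\ell,N_\ell/n)$ are independent because $s+1<\ell-1$, bounds the contraction factor by $\tfrac{N_s+N_{s+1}}{n}\bigl(1-\tfrac{N_s+N_{s+1}}{n}\bigr)\le\tfrac14$ (your AM--GM step), and concludes via Markov's inequality after $O(\log n)$ steps, then iterates over the $\le k-2$ extreme opinions exactly as you do. The only differences are cosmetic: the paper fixes $s,\ell$ as the initial extremes and works with unconditional expectations rather than your stopping-time formulation, and uses denominator $n$ rather than $n-1$ in the sampling probabilities.
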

\begin{proof}[Proof of \cref{lem:extreme}]
By definition, we have that $N_s'\sim Bin\left(N_s+N_{s+1}, N_s/n\right)$
and  $N_\ell'\sim Bin\left(N_{\ell-1}+N_{\ell}, N_\ell/n\right)$.
Furthermore, $N_s'$ and $N_\ell'$ are independent since $s+1<\ell-1$.
Write $Z=N_sN_\ell$ and $Z'=N_s'N_\ell'$.
Then, we have
\begin{align}
    \E\left[Z'\right]&=\E\left[N_s'N_\ell'\right]
    =\E\left[N_s'\right]\E\left[N_\ell'\right]
    =(N_s+N_{s+1})\frac{N_s}{n}(N_{\ell-1}+N_{\ell})\frac{N_{\ell}}{n} \nonumber \\
    &= Z \frac{N_s+N_{s+1}}{n}\frac{N_{\ell-1}+N_{\ell}}{n}
    \leq Z \frac{N_s+N_{s+1}}{n}\left(1-\frac{N_s+N_{s+1}}{n}\right)
    \leq \frac{1}{4}Z.
    \label{eq:E_extreme_decrease}
\end{align}
The first inequality follows from $N_s+N_{s+1}+N_{\ell-1}+N_{\ell}\leq n$.
For $Z(t)=N_s(t)N_\ell(t)$, \eqref{eq:E_extreme_decrease} implies that $\E\left[Z(t+1)\right]\leq \E[Z(t)]/4$ holds for any $t\geq 0$.
Taking $T=\lceil 3\log n\rceil$ and using the Markov inequality, we obtain
\begin{align*}
    \Pr\left[Z(T)>0\right]
    %=\Pr\left[Z(T)\geq 1\right]
    \leq \E\left[Z(T)\right]
    \leq \frac{1}{4}\E\left[Z(T-1)\right]
    \leq \cdots \leq \frac{1}{4^T}\E[Z(0)]
    \leq \frac{n^2}{\mathrm{e}^{\lceil 3\log n\rceil }}\leq \frac{1}{n}.
\end{align*}
\end{proof}

\subsection{Difference from the initial average}
\begin{lemma}
\label{lem:sum_deviation}
Let $S(t)= \sum_{v\in V}X_v(t)$.
For any $T\geq 0$ and $\epsilon>0$,
\begin{align*}
    \Pr\left[\left|S(T)-S(0)\right|\geq \epsilon\right]\leq 2\exp\left(-\frac{\epsilon^2}{2nT}\right).
\end{align*}
\end{lemma}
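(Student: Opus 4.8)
The plan is to view $S(T)-S(0) = \sum_{t=0}^{T-1}(S(t+1)-S(t))$ as a martingale difference sum and apply the Azuma--Hoeffding inequality (Lemma~\ref{lem:Azuma-Hoeffding}), but with a twist: the one-step increment $S(t+1)-S(t)$ is \emph{not} deterministically bounded by a small constant, since in the synchronous process all $n$ vertices update at once and in the worst case $S$ could change by $\Theta(n)$. So a naive application of Azuma with increment bound $n$ only gives concentration at scale $n^{3/2}\sqrt{\log n}$, which is too weak. The fix is to exploit that, conditioned on $X(t)$, the increment $S(t+1)-S(t) = \sum_{v\in V}\Delta_v$ is a sum of \emph{independent} bounded random variables (each vertex chooses its neighbour independently), with $|\Delta_v|\le 1$ and, crucially, $\E[\Delta_v\mid X(t)] = 0$ by the martingale computation in Lemma~\ref{Lemma1}(iii). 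Hence, conditioned on $X(t)$, the increment is a zero-mean sum of $n$ independent terms in $[-1,1]$.

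The cleanest route is therefore a McDiarmid-style bounded-differences argument applied directly to the whole history. Concretely, I would let $c_1,\dots,c_T$ be the random choices made at rounds $1,\dots,T$ (where $c_t=(c_{t,v})_{v\in V}$ records each vertex's sampled neighbour at round $t$), so that $S(T)$ is a function $f(c_1,\dots,c_T)$ of $nT$ independent random variables. Changing a single vertex's choice at a single round, $c_{t,v}$, changes $X_v(t+1)$ by at most $2$ (from $X_v(t)-1$ to $X_v(t)+1$), and this perturbation can then propagate: at each subsequent round it can affect at most... actually this is the subtle point — a change at one vertex can cascade. So pure McDiarmid on $nT$ coordinates does not immediately give the clean $\sqrt{nT}$ bound either, because of propagation through rounds.

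The genuinely clean argument — and the one I would write up — is the martingale-with-subgaussian-increments approach. Define the filtration $\mathcal{F}_t = \sigma(X(0),\dots,X(t))$ and set $D_t = S(t)-S(t-1)$. By Lemma~\ref{Lemma1}(iii), $\E[D_t\mid\mathcal{F}_{t-1}]=0$. Conditioned on $\mathcal{F}_{t-1}$, $D_t$ is a sum of $n$ independent zero-mean random variables each supported in $[-1,1]$ (here I'd note $\Delta_v$ is bounded by $1$, not $2$, since a single update moves $X_v$ by at most one), so by Hoeffding's lemma $D_t$ is conditionally sub-Gaussian with variance proxy $n/4$: $\E[e^{\lambda D_t}\mid\mathcal{F}_{t-1}]\le e^{\lambda^2 n/8}$. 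Multiplying these conditional MGF bounds along the chain gives $\E[e^{\lambda(S(T)-S(0))}]\le e^{\lambda^2 nT/8}$, and the standard Chernoff optimisation $\lambda = 4\epsilon/(nT)$ yields $\Pr[S(T)-S(0)\ge\epsilon]\le \exp(-2\epsilon^2/(nT))$; the two-sided bound follows by symmetry, giving the claimed $2\exp(-\epsilon^2/(2nT))$ (indeed a slightly stronger constant).

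The main obstacle is precisely the observation in the second paragraph: one must resist the temptation to apply Azuma with the crude $|D_t|\le n$ bound, and instead realise that the \emph{conditional} structure of $D_t$ as a sum of independent bounded increments supplies a variance proxy of order $n$ rather than $n^2$. Once that is in hand, the proof is a routine iterated-conditioning Chernoff bound. A secondary point to get right is the claim $|\Delta_v|\le 1$ and the conditional independence of the $\Delta_v$'s given $X(t)$ — independence holds because each vertex's sampled neighbour is drawn independently, and $\Delta_v$ is a deterministic function of $X(t)$ together with vertex $v$'s own sampled neighbour.
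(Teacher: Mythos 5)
Your proof takes essentially the same route as the paper's: condition on $X(t)$, observe that the per-vertex increments $\Delta_v(t+1)$ are independent, conditionally zero-mean (by Lemma~\ref{Lemma1}(iii)) and bounded in $[-1,1]$, apply Hoeffding's lemma (Lemma~\ref{lem:Hoeffding}) to each, multiply the conditional moment generating functions, and conclude with the sub-Gaussian Azuma variant (Lemma~\ref{lem:AHineq}). The only quibble is your constant: an increment with range $b-a=2$ gives $\mathrm{e}^{\lambda^2(b-a)^2/8}=\mathrm{e}^{\lambda^2/2}$ per vertex rather than $\mathrm{e}^{\lambda^2/8}$, so the Chernoff optimisation yields exactly the stated $2\exp\left(-\frac{\epsilon^2}{2nT}\right)$ and not your ``slightly stronger'' $2\exp\left(-\frac{2\epsilon^2}{nT}\right)$.
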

\begin{proof}
{First, by \cref{SKn} we observe that $(S(t))_{t=0,1,2,...}$ is a martingale. }
From definition, we have $X_v(t+1)-X_v(t)\in \{-1,0,1\}$.
Furthermore, for any $v\neq v'$,  $X_v(t+1)-X_v(t)$ and $X_{v'}(t+1)-X_{v'}(t)$ are independent.
Write $\Delta_v(t)=X_v(t)-X_v(t-1)$.
Applying \cref{lem:Hoeffding}, we have
\begin{align}
    \E\left[\mathrm{e}^{\lambda (S(t+1)-S(t))}\middle| X(t)\right]
    &=\E\left[\mathrm{e}^{\lambda ((S(t+1)-S(t))-\E\left[S(t+1)-S(t)\mid X(t)\right])}\middle| X(t)\right]\nonumber\\
    %&=\E\left[\mathrm{e}^{\lambda \sum_{v\in V}\Delta_v(t+1)}\middle| X(t)\right]
    &=\E\left[\mathrm{e}^{\lambda \sum_{v\in V}(\Delta_v(t+1)-\E\left[\Delta_v(t+1)\mid X(t)\right])}\middle| X(t)\right]
    \nonumber \\
    &=\prod_{v\in V}\E\left[\mathrm{e}^{\lambda (\Delta_v(t+1)-\E\left[\Delta_v(t+1)\mid X(t)\right])}\middle| X(t)\right]\nonumber\\
    &\leq \prod_{v\in V}\mathrm{e}^{\frac{\lambda^2}{2}}=\mathrm{e}^{\frac{\lambda^2n}{2}}. \label{eq:indep_Hoeff}
\end{align}
Combining  \eqref{eq:indep_Hoeff} and \cref{lem:AHineq}, we obtain the claim.
\end{proof}

{
{\bf Remark.} Choose $ T=\lceil 3\log n\rceil$ from the proof of \cref{thm:SyncKn}
and $\epsilon=\sqrt{7n}\log n$ in Lemma \ref{lem:sum_deviation} to obtain
\[
\Pr(|S(T)-S(0)| \ge \sqrt{7n}\log n) < \frac 1n.
\]}

Note that the $2\exp\left(-\frac{\epsilon^2}{2nT}\right)$ bound in \cref{lem:sum_deviation} is better than the $2\exp\left(-\frac{\epsilon^2}{2n^2T}\right)$ bound obtained directly from the Azuma-Hoeffding inequality (\cref{lem:Azuma-Hoeffding}).

\begin{lemma}
\label{lem:weighted_sum_concentration}
Consider a synchronous vertex process on an arbitrary graph.
Let $Z(t)= n\sum_{v\in V}\pi_v X_v(t)$.
Then, for any $T\geq 0$ and $\epsilon>0$,
\begin{align*}
    \Pr\left[\left|Z(T)-Z(0)\right|\geq \epsilon\right]\leq 2\exp\left(-\frac{\epsilon^2}{2n^2\|\pi\|_2^2T}\right),
\end{align*}
where $\|\pi\|_2=\sqrt{\sum_{v\in V}\pi_v^2}$.
\end{lemma}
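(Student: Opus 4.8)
The plan is to repeat the argument of \cref{lem:sum_deviation} almost verbatim, carrying the weights $\pi_v$ through the moment-generating-function computation. First, by \cref{SKn}(iii) the degree-biased weight $Z(t)=n\sum_{v\in V}\pi_v X_v(t)$ is a martingale with respect to the filtration generated by $(X(t))_{t\ge 0}$, so $\E[Z(t+1)\mid X(t)]=Z(t)$ and it suffices to bound the conditional moment generating function of the one-step increment $Z(t+1)-Z(t)$.

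Write $\Delta_v(t+1)=X_v(t+1)-X_v(t)\in\{-1,0,1\}$, so that $Z(t+1)-Z(t)=n\sum_{v\in V}\pi_v\Delta_v(t+1)$. In the synchronous vertex process each vertex samples its neighbour independently of all the others, so conditioned on $X(t)$ the variables $(\Delta_v(t+1))_{v\in V}$ are mutually independent, and $n\pi_v\Delta_v(t+1)$ lies in an interval of length $2n\pi_v$. Using the martingale property to center, factorising the moment generating function over $v$ exactly as in \eqref{eq:indep_Hoeff}, and applying Hoeffding's lemma (\cref{lem:Hoeffding}) to each factor gives
\begin{align*}
\E\!\left[\mathrm{e}^{\lambda(Z(t+1)-Z(t))}\,\middle|\,X(t)\right]
&=\prod_{v\in V}\E\!\left[\mathrm{e}^{\lambda n\pi_v(\Delta_v(t+1)-\E[\Delta_v(t+1)\mid X(t)])}\,\middle|\,X(t)\right]\\
&\le\prod_{v\in V}\mathrm{e}^{\lambda^2(2n\pi_v)^2/8}=\mathrm{e}^{\lambda^2 n^2\|\pi\|_2^2/2},
\end{align*}
i.e.\ the analogue of \eqref{eq:indep_Hoeff} with $n$ replaced by $n^2\|\pi\|_2^2$; for a regular graph $\pi_v=1/n$ and this recovers the bound $\mathrm{e}^{\lambda^2 n/2}$ of \cref{lem:sum_deviation}.

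Feeding this subgaussian one-step estimate with variance proxy $n^2\|\pi\|_2^2$ into the Azuma--Hoeffding machinery (\cref{lem:AHineq}) over the $T$ steps, and optimising over $\lambda$, yields $\Pr[|Z(T)-Z(0)|\ge\epsilon]\le 2\exp\!\left(-\epsilon^2/(2n^2\|\pi\|_2^2T)\right)$, as claimed. The only step that genuinely needs care is the conditional independence of the per-vertex increments: this is exactly where the synchronous vertex dynamics (each vertex choosing its own neighbour) is used, and it is what lets us replace the crude bound $|Z(t+1)-Z(t)|\le n\sum_{v}\pi_v=n$ — which a direct application of Azuma--Hoeffding would use, giving only $2\exp(-\epsilon^2/(2n^2T))$ — by the sharper $\ell_2$ quantity $n^2\|\pi\|_2^2$, which is strictly smaller for any connected graph on at least two vertices.
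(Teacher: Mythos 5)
Your proof is correct and follows essentially the same route as the paper's: establish that $Z(t)$ is a martingale, use the conditional independence of the per-vertex increments in the synchronous process to factorise the moment generating function, apply Hoeffding's lemma (\cref{lem:Hoeffding}) to each factor with range $2n\pi_v$ to obtain the subgaussian bound $\mathrm{e}^{\lambda^2 n^2\|\pi\|_2^2/2}$, and conclude via \cref{lem:AHineq}. The numerical constants check out, and your closing remark about improving on the crude $|Z(t+1)-Z(t)|\le n$ Azuma bound mirrors the paper's own remark following \cref{lem:sum_deviation}.
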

\begin{proof}
Recall that $Z(t)=n\sum_{v\in V}\pi_vX_v(t)$ and $(Z(t))_{t=0,1,2,...}$ is a martingale. 
Write $\Delta_v(t)=X_v(t)-X_v(t-1)\in \{-1,0,1\}$.
Then, applying \cref{lem:Hoeffding} yields
%From definition, we have $X_v(t+1)-X_v(t)\in \{-1,0,1\}$.
%Furthermore, for any $v\neq v'$,  $X_v(t+1)-X_v(t)$ and $X_{v'}(t+1)-X_{v'}(t)$ are independent.
%
\begin{align}
    \E\left[\mathrm{e}^{\lambda (Z(t+1)-Z(t))}\middle| X(t)\right]
    &=\E\left[\mathrm{e}^{\lambda ((Z(t+1)-Z(t))-\E\left[Z(t+1)-Z(t)\mid X(t)\right])}\middle| X(t)\right]\nonumber\\
    &=\E\left[\mathrm{e}^{\lambda n\sum_{v\in V}\pi_v(\Delta_v(t+1)-\E\left[\Delta_v(t+1)\mid X(t)\right])}\middle| X(t)\right]
    \nonumber \\
    &=\prod_{v\in V}\E\left[\mathrm{e}^{\lambda 
 n \pi_v(\Delta_v(t+1)-\E\left[\Delta_v(t+1)\mid X(t)\right])}\middle| X(t)\right]\nonumber\\
    &\leq \prod_{v\in V}\mathrm{e}^{\frac{4\lambda^2n^2\pi_v^2}{8}}=\mathrm{e}^{\frac{\lambda^2n^2\|\pi\|_2^2}{2}}. \label{eq:w_indep_Hoeff}
\end{align}
Combining  \eqref{eq:w_indep_Hoeff} and \cref{lem:AHineq}, we obtain the claim.
\end{proof}
For regular graphs, both $\pi_v$ and $\|\pi\|_2^2$ are $1/n$. So \cref{lem:weighted_sum_concentration} generalizes \cref{lem:sum_deviation}.

\subsection{At most three consecutive opinions remain} \label{3values}
In this section, we suppose that $X_v(0)\in \{i-1,i,i+1\}$ holds for some $i$ and for all $v \in V$, i.e., all initial opinions are from three consecutive integers.
Without loss of generality, we assume that $i=2$ throughout this section.

\begin{lemma}
\label{lem:N1N3ineq}
Suppose that $X_v(0)\in \{1,2,3\}$ holds for all $v\in V$.
Then, for any $t\geq 0$, 
    \begin{align*}
        \E[N_1(t+1)N_3(t+1)\mid X(t)]
        \leq \left(1-\frac{N_1(t)+N_3(t)}{2n}\right)N_1(t)N_3(t).
    \end{align*}
\end{lemma}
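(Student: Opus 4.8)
The plan is to compute $\E[N_1(t{+}1)N_3(t{+}1)\mid X(t)]$ exactly and then bound it crudely. Condition on $X(t)$ throughout and drop the argument $t$, writing $N_j=N_j(t)$ and $N_j'=N_j(t+1)$; since (after the WLOG normalisation $i=2$) all opinions lie in $\{1,2,3\}$ we have $N_1+N_2+N_3=n$. In one synchronous step each vertex picks a uniformly random vertex, independently of the others, so it lands in $A_j$ with probability $N_j/n$. A vertex of $A_3$ can only move to opinion $2$, never to $1$ in a single step, and symmetrically a vertex of $A_1$ can never reach $3$; hence
\[
N_1' \;=\; \sum_{v\in A_1}\ind{v\text{ picks }A_1} \;+\; \sum_{v\in A_2}\ind{v\text{ picks }A_1},
\qquad
N_3' \;=\; \sum_{v\in A_3}\ind{v\text{ picks }A_3} \;+\; \sum_{v\in A_2}\ind{v\text{ picks }A_3}.
\]

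First I would multiply out $N_1'N_3'$ and take expectations term by term. Every cross term pairs two \emph{distinct} vertices except the terms $\ind{v\text{ picks }A_1}\ind{v\text{ picks }A_3}$ coming from a single $v\in A_2$, and these vanish because a vertex cannot pick both $A_1$ and $A_3$. For two distinct vertices the choices are independent, so each surviving term has expectation $(N_1/n)(N_3/n)$. Counting the surviving ordered pairs — $N_1N_3$ from $A_1\times A_3$, $N_1N_2$ from $A_1\times A_2$, $N_2N_3$ from $A_2\times A_3$, and $N_2(N_2-1)$ from the off‑diagonal of $A_2\times A_2$ — gives $N_1N_3+N_2(N_1+N_3+N_2-1)=N_1N_3+N_2(n-1)$, so
\[
\E[N_1'N_3'\mid X(t)] \;=\; \frac{N_1N_3}{n^2}\,\brac{N_1N_3+N_2(n-1)}.
\]

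It then remains to show this is at most $\brac{1-\tfrac{N_1+N_3}{2n}}N_1N_3$. If $N_1=0$ or $N_3=0$ both sides are $0$, so assume $N_1,N_3>0$ and divide by $N_1N_3$. Using $N_2(n-1)\le N_2 n=(n-N_1-N_3)n$ gives
\[
\frac{N_1N_3+N_2(n-1)}{n^2}\;\le\;1-\frac{N_1+N_3}{n}+\frac{N_1N_3}{n^2},
\]
and by AM--GM together with $N_1+N_3\le n$ we have $\dfrac{N_1N_3}{n^2}\le\dfrac{(N_1+N_3)^2}{4n^2}\le\dfrac{N_1+N_3}{4n}\le\dfrac{N_1+N_3}{2n}$, which closes the estimate. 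The only point that needs care is the bookkeeping in the middle paragraph: recognising that the vertices of $A_2$, shared between the two sums, induce \emph{negative} correlation between $N_1'$ and $N_3'$ (an $A_2$-vertex may drift down or up, but not both), so that the answer is strictly below the independent guess $\E N_1'\,\E N_3'$; the rest is routine.
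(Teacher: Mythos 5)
Your proof is correct and rests on the same key observation as the paper's: $N_1'$ and $N_3'$ are sums of independent indicators whose only shared source ($A_2$-vertices) contributes negatively correlated terms, since a vertex cannot pick both $A_1$ and $A_3$. The paper packages this as $\mathrm{Cov}(Y_{2\to1},Y_{2\to3})\le 0$ for the multinomial split of $A_2$ and then bounds $(1-N_1/n)(1-N_3/n)\le 1-\tfrac{N_1+N_3}{2n}$, whereas you compute the second moment exactly and finish with AM--GM; the difference is cosmetic.
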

\begin{proof}
Let $Y_{i\to j}$ denote the number of vertices that change their opinion from $i$ to $j$. 
We have $N_1'=Y_{1\to 1}+Y_{2\to 1}$ and $N_3'=Y_{2\to 3}+Y_{3\to 3}$.
Note that $Y_{3\to 1}=Y_{1\to 3}=0$.
It is easy to see that $Y_{1\to 1}\sim Bin(N_1,N_1/n)$ and $Y_{3\to 3}\sim Bin(N_3,N_3/n)$.
An important observation is that $(Y_{2\to 1},Y_{2\to 2},Y_{2\to 3})$ follows a multinomial distribution with parameters $N_2$ and $(N_1/n,N_2/n,N_3/n)$.
Hence, $Cov(Y_{2\to 1},Y_{2\to 3})\leq 0$ and we have $\E[Y_{2\to 1},Y_{2\to 3}]\leq \E[Y_{2\to 1}]\E[Y_{2\to 3}]$.
Thus,
\begin{align}
\E[N_1'N_3']
&=\E[Y_{1\to 1}(Y_{2\to 3}+Y_{3\to 3})]+\E[Y_{2\to 1}Y_{2\to 3}]+\E[Y_{2\to 1}Y_{3\to 3}] \nonumber\\
&\leq \E[Y_{1\to 1}]\E[Y_{2\to 3}+Y_{3\to 3}]+\E[Y_{2\to 1}]\E[Y_{2\to 3}]+\E[Y_{2\to 1}]\E[Y_{3\to 3}] \nonumber\\
&=(\E[Y_{1\to 1}]+\E[Y_{2\to 1}])(\E[Y_{2\to 3}]+\E[Y_{3\to 3}])\nonumber\\
&=\left(N_1\frac{N_1}{n}+N_2\frac{N_1}{n}\right)\left(N_2\frac{N_3}{n}+N_3\frac{N_3}{n}\right)\nonumber\\
&=N_1\left(1-\frac{N_3}{n}\right)N_3\left(1-\frac{N_1}{n}\right). \label{eq:N1N3ineq}
\end{align}
Note that $Y_{i\to j}$ and $Y_{k\to \ell}$ are independent for $i\neq k$.
Combining \cref{eq:N1N3ineq} and the fact that $(1-x)(1-y)=1-x-y+xy\leq 1-(x+y)+\frac{(x+y)^2}{2}\leq 1-\frac{x+y}{2}$ holds for any $0\leq x+y\leq 1$, we obtain the claim. 
\end{proof}
Intuitively speaking, \cref{lem:N1N3ineq} implies that $N_1(t)N_3(t)$ continues to decrease by a factor of $1-1/\sqrt{n}$ while $N_1(t)+N_3(t)\geq 2\sqrt{n}$.
Hence, within $T=O(\sqrt{n}\log n)$ steps, $N_1(t)N_3(t)$ reaches $0$ or $N_1(t)+N_3(t)< 2\sqrt{n}$.
In other words, either of the following events occurs: 
(1) either $N_1(t)$ or $N_3(t)$ is zero, 
(2) both $N_1(t)$ and $N_3(t)$ are less than $2\sqrt{n}$.
The following lemma shows it formally.
\begin{lemma}
    \label{lem:threeleft}
    Suppose that $X_v(0)\in \{1,2,3\}$ holds for all $v\in V$.
    Let $T=\lceil 3\sqrt{n}\log n\rceil$.
    Then, for some $0\leq t\leq T$, w.h.p.\ one of the following two events occurs.
\begin{enumerate}[(1)]
    \item \label{lab:difbig} $N_1(t)=0$ or $N_3(t)=0$.
    \item \label{lab:difsmall} $N_1(t)\leq 2\sqrt{n}$ and $N_3(t)\leq 2\sqrt{n}$.
    %(\textcolor{cyan}{Hence, opinion $2$ wins w.h.p.~from the PULL voting arguments?}{\blue See end of this section})
\end{enumerate}
\end{lemma}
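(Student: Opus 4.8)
The strategy is to convert the multiplicative decrease from Lemma~\ref{lem:N1N3ineq} into a high-probability bound on the first time $N_1N_3$ becomes small. Define $Z(t)=N_1(t)N_3(t)$ and let $\tau$ be the first time that either $Z(t)=0$ or $N_1(t)+N_3(t)<2\sqrt n$. Lemma~\ref{lem:N1N3ineq} says that, as long as $t<\tau$, we have $\E[Z(t+1)\mid X(t)]\le (1-\tfrac{N_1(t)+N_3(t)}{2n})Z(t)\le (1-\tfrac{1}{\sqrt n})Z(t)$, since $N_1(t)+N_3(t)\ge 2\sqrt n$ on $\{t<\tau\}$. So the stopped process $Z(t\wedge\tau)$ is a supermartingale that contracts by a factor $1-1/\sqrt n$ at every step before $\tau$.

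First I would make the ``contraction up to $\tau$'' precise: consider $M(t)=(1-1/\sqrt n)^{-(t\wedge\tau)}Z(t\wedge\tau)$, and check it is a supermartingale, so $\E[M(t)]\le \E[M(0)]=Z(0)\le n^2$. Equivalently $\E[Z(t\wedge\tau)]\le (1-1/\sqrt n)^{t}\,n^2$. Taking $t=T=\lceil 3\sqrt n\log n\rceil$ gives
\[
\E[Z(T\wedge\tau)]\le \Bigl(1-\tfrac{1}{\sqrt n}\Bigr)^{T} n^2 \le e^{-T/\sqrt n}\,n^2 \le e^{-3\log n}\,n^2 = \tfrac1n .
\]
Then by Markov, $\Pr[Z(T\wedge\tau)\ge 1]\le 1/n$. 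Now I would argue that $Z(T\wedge\tau)<1$ forces the desired conclusion: if $\tau\le T$ then by definition at time $\tau$ either $N_1=0$ or $N_3=0$ (event~\eqref{lab:difbig}) or $N_1+N_3<2\sqrt n$, which certainly implies both are $\le 2\sqrt n$ (event~\eqref{lab:difsmall}); and if $\tau>T$ then $T\wedge\tau=T<\tau$, so $Z(T)$ is a positive integer that is $<1$, which is impossible. Hence w.h.p.\ $\tau\le T$ and one of the two listed events holds at $t=\tau\le T$, giving the statement with this particular $t$.

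The only genuinely delicate point — and the one I would be most careful about — is the supermartingale/stopping-time bookkeeping: Lemma~\ref{lem:N1N3ineq} gives a clean one-step inequality only conditional on $X(t)$ and only when we are entitled to use $N_1(t)+N_3(t)\ge 2\sqrt n$, so I must restrict attention to the stopped process and verify the factor $(1-1/\sqrt n)$ is valid on the event $\{t<\tau\}$ (on $\{t\ge\tau\}$ the stopped process is constant, so the supermartingale step is trivial there). A minor subtlety is the passage from ``$Z(T\wedge\tau)$ is a positive integer smaller than $1$ is impossible'' to the conclusion — this is where integrality of $N_1,N_3$ is used, so I would state it explicitly. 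Everything else (the inequality $(1-1/\sqrt n)^T\le e^{-T/\sqrt n}$, Markov) is routine. No new external tools are needed beyond Lemma~\ref{lem:N1N3ineq} and the optional-stopping/Markov facts.
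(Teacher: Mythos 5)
Your overall strategy is exactly the paper's: define the stopping time $\tau$ as the first step at which $N_1N_3=0$ or $N_1+N_3<2\sqrt n$, rescale the product $N_1(t)N_3(t)$ by $(1-1/\sqrt n)^{-t}$, stop at $\tau$, verify the supermartingale property from Lemma~\ref{lem:N1N3ineq}, and conclude via Markov plus the integrality of $N_1N_3$. However, one step as you wrote it is false. From $\E[M(T)]\le Z(0)\le n^2$, where $M(T)=(1-1/\sqrt n)^{-(T\wedge\tau)}Z(T\wedge\tau)$, it does \emph{not} follow ``equivalently'' that $\E[Z(T\wedge\tau)]\le(1-1/\sqrt n)^{T}n^2$: on the event $\{\tau\le T\}$ the normalizing factor is only $(1-1/\sqrt n)^{-\tau}$, which can be as small as $1$, so all you can extract unconditionally is $\E[Z(T\wedge\tau)]\le n^2$. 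The stronger claim genuinely fails: if, say, the walk exits at $\tau=1$ with $N_1\approx N_3\approx\sqrt n$, then $Z(T\wedge\tau)=Z(\tau)\approx n$, so $\E[Z(T\wedge\tau)]$ can be of order $n$, and your subsequent Markov bound $\Pr[Z(T\wedge\tau)\ge 1]\le 1/n$ is also false (that probability can be close to $1$ even when the lemma's conclusion holds).

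The repair is one line and is precisely what the paper does. The full factor $(1-1/\sqrt n)^{-T}$ is available exactly on the event you actually care about, $\{\tau>T\}$, where $T\wedge\tau=T$ and $Z(T)\ge 1$ by integrality. Hence
\[
\Pr[\tau>T]\;\le\;\E\bigl[Z(T)\ind{\tau>T}\bigr]\;\le\;(1-1/\sqrt n)^{T}\,\E\bigl[M(T)\ind{\tau>T}\bigr]\;\le\;(1-1/\sqrt n)^{T}\,n^2\;\le\;1/n,
\]
(the paper phrases this as $\E[Z_T\mid\tau>T]\Pr[\tau>T]\le n^2$ together with $\E[Z_T\mid\tau>T]\ge(1-1/\sqrt n)^{-T}\ge n^3$). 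With this substitution your case analysis at time $\tau$ --- either $N_1N_3=0$, or $N_1+N_3<2\sqrt n$ forcing both counts below $2\sqrt n$ --- goes through unchanged, and the rest of your bookkeeping (the stopped supermartingale verification, $(1-1/\sqrt n)^T\le e^{-3\log n}$) is correct.
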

\begin{proof}%[Proof of \cref{lem:threeleft}]
    Let 
    \begin{align*}
        \tau&=\min\{t\geq 0\mid N_1(t)+N_3(t)<2\sqrt{n}\textrm{ or }N_1(t)N_3(t)=0\}, \\
        Y_t&=N_1(t)N_3(t)\left(1-\frac{1}{\sqrt{n}}\right)^{-t}, \hspace{1em}
        Z_t=Y_{t\wedge \tau}=Y_{\min\{t,\tau\}}.
    \end{align*}
    Note that we have $Z_{t+1}-Z_{t}=\mathbbm{1}_{\tau> t}(Y_{t+1}-Y_{t})$.
    From \cref{lem:N1N3ineq}, 
    \begin{align*}
        \E[Z_{t+1}-Z_{t}\mid X(t)]
        &=\mathbbm{1}_{\tau> t}\left(\E[Y_{t+1}\mid X(t)]-Y_{t}\right)\nonumber \\
        &\leq \mathbbm{1}_{\tau> t}\left(\frac{\left(1-\frac{N_1(t)+N_3(t)}{2n}\right)N_1(t)N_3(t)}{\left(1-\frac{1}{\sqrt{n}}\right)^{t+1}}-\frac{N_1(t)N_3(t)}{\left(1-\frac{1}{\sqrt{n}}\right)^{t}}\right)\nonumber \\
        &\leq 0 %\label{eq:superMN1N3}
    \end{align*}
    holds, i.e., $\E[Z_{t+1}]\leq \E[Z_t]$ and $\E[Z_T]\leq \E[Z_0]=Z_0$. 
    Let $T=\lceil 3\sqrt{n}\log n\rceil$. % for some sufficiently large constant $C>0$.
    Then,
    \begin{align}
        \E[Z_T\mid \tau>T]\Pr[\tau>T]
        \leq \E[Z_T]
        \leq Z_0
        =N_1(0)N_3(0)\leq n^2. \label{eq:superMN1N3}
    \end{align}
    Furthermore, 
    \begin{align}
        \E[Z_T\mid \tau>T]
        &=\E\left[N_1(T)N_3(T)\left(1-\frac{1}{\sqrt{n}}\right)^{-T}\mid \tau>T\right]
        \geq \left(1-\frac{1}{\sqrt{n}}\right)^{-3\sqrt{n}\log n}\geq n^3. \label{eq:lowerN1N3}
    \end{align}
    Note that the event $\tau>T$ implies that $N_1(T)+N_3(T)\geq 2\sqrt{n}$ and $N_1(T)N_3(T)\geq 1$.
    Combining \cref{eq:superMN1N3,eq:lowerN1N3}, we obtain $\Pr[\tau>T]\leq 1/n$.
\end{proof}

\subsection{
\texorpdfstring
{Completing the proof of \cref{ThKnSync}}
{Completing the proof of Theorem 1}
}
\label{subsec:completeKn}
If we have reached here at some step $t$, then  at most three values $i-1,i,i+1$ remain, and one of the cases  \cref{lem:threeleft} (1)  or \cref{lem:threeleft} (2) holds. We next prove that w.h.p.  the process will finish in $O(n)$ steps with the claimed results. 

In either case, by \cref{lem:sum_deviation}, $S(t)=S(0) (1+o(1))$. 
So if \cref{lem:threeleft} (1) holds, there are two  remaining  values, say $i,i+1$, and we can use two-value pull voting with \cref{ThA} directly.  

However if \cref{lem:threeleft} (2) holds, then there are three values $i-1,i,i+1$, where $|A_{i-1} \cup A_{i+1}|=O(n^{1/2})$. %O(n^{5/6})$. 
Thus $S(t)/n \sim i$, and we next prove that $i$ wins w.h.p. by coupling the process with pull voting.
For convenience let $\{i-1,i,i+1\}=\{1,2,3\}$, let $A_2=A$ and $B=A_1 \cup A_3$. In pull voting value $i=2$ wins with probability $|A|/n=1-o(1)$.  In one step of synchronous pull voting, $|A_P'|=Bin(n,|A|/n)$.

There is a coupling between incremental voting on three values, and pull voting such that
$|A_I'|$ stochastically dominates $|A_P'|$. Firstly the number of vertices which choose in $A=A_2$ directly is $Bin(n, |A|/n)$.  Denote this set by $A'=A'_P$ and let $|A_P'|=X_P$. Given the set $A_P'$, a further non-negative number $Y_I$ of vertices take the value $i=2$ indirectly. The value of $Y_I$ is a sum of binomials, namely
\[
Y_I=Bin(|A_3\setminus A_P'|, N_1/n) + Bin(|A_1\setminus A_P'|, N_3/n).
\]
It follows that under the coupling $|A_I'|=X_P+Y_I \ge X_P=|A_P'|$ and thus
\[
\Pr(\text{Value } i \text{ wins in incremental voting}) \ge
\Pr(\text{Value } i \text{ wins in pull voting}) =1-o(1).
\]

%{\teal We can use \cref{lem:threeleft_newversion} instead of \cref{lem:threeleft}.
%Then, we can delete Appendices B, C, and \cref{lem:three_opinions_Mar}.
%Furthermore, if we can use the techniques in {\bf Completing the proof of Theorem \ref{ThKnSync}.} for the asynchronous process on $K_n$, we obtain the asynchronous version of Theorem 1 (Theorem 29).}

{ \section{Concluding comments}

The incremental voting model offers an alternative type of pull voting suitable for discrete numeric opinions which can be compared on a linear scale. This may be appropriate for systems which need
a very simple protocol which converges towards an average opinion.  As the extremal values are discarded rapidly in some instances, it could also offer a faster alternative to remove outliers in some plurality systems. 

The incremental voting process can  be viewed as a form of discrete averaging of integer weights. 
The final answer is an integer (no fractions), obtained in finite expected time.  For suitable expanders, w.h.p.\ the process returns the average rounded up or down to an integer.
To increase the accuracy of the averaging, multiply all initial values by $10^h$ before averaging. The final answer, after re-scaling, will now  be w.h.p.\ correct to the $h$-th decimal place.
The cost is the increased 
convergence time.

In incremental voting,  the weighted average remains a martingale under a wide range of conditions.  
Let $P$ be any reversible transition matrix and $\pi$ be its stationary distribution. 
Then, if the selected vertex $v$ chooses $u$ with probability
$P(v,u)$, the random variable $W=\sum_{v\in V}\pi_v X_v$ is a martingale.
As an example, if
$P(u,u)=1-L_u$ and $P(u,v)=L_u/d(u)$, 
where $0 < L_u \le 1$, then
$L_u$ can be viewed as the propensity for vertex $u$ to change its opinion 
when selected in a given step.
Here, $\pi(v)=d(v)/C L_v$, where $C=\sum L_v/d(v)$.

\ignore{
The incremental voting model offers an alternative type of pull voting suitable for discrete numeric opinions which can be compared on a linear scale. This may be appropriate for systems which need to converge towards an average opinion.  As the extremal values are discarded rapidly in some instances, it could also offer a faster alternative to remove outliers in some plurality systems. 

We next offer two related remarks.

{\bf Incremental voting in the population protocol model. }\;\;
We make a remark about implementation of the process in the population protocol model.
Suppose that at each step a random edge is chosen and both end points carry out discrete incremental voting. Two possible things can happen: (i) neither vertex changes its opinion, or  (ii) one end increases its opinion by one and the other end decreases its opinion by one. Thus the total weight at any step is equal to the initial total weight $cn$. The process must eventually converge to a mixture of vertices with opinions $i$ and $i+1$, which according to Theorem \ref{ThA} are in the proportions $p$ and $q$ respectively. That $pn$, the total number of vertices with opinion $i$ really is an integer, can be seen from
\[
pn=(i+1)n-cn.
\]
As the initial total $cn$ is integer, the right hand side is the difference of two integers.

{\bf Discrete averaging.}\;\;
The incremental voting process can  be viewed as a form of discrete averaging
 of integer weights. Choose a random neighbour. If your neighbour's weight is larger, add one to your own weight; if it is smaller subtract one from your weight. The final answer will be an integer (no fractions), obtained in finite expected time.  For suitable expanders, w.h.p. the process will return an average consisting of one of two integer values $i, i+1$, where $ i \le c \le i+1$. To increase the accuracy of the averaging, the following approach may be used. Multiply all initial (integer) values by $10^h$ before averaging. The final answer, after re-scaling, will now  be correct to the $h$-th decimal place w.h.p..

{\red {\bf A generalized process.} \;\; In incremental voting,  the weighted average  can remain a martingale under a wide range of conditions.  Let $P$ be any reversible transition matrix and $\pi$ be its stationary distribution. 
Then, for  (e.g.) the synchronous model where each vertex $v$ chooses $u$ with probability
$P(v,u)$, the random variable $W=\sum_{v\in V}\pi_vX_v$ is a martingale.  Write $W=\sum_{v \in V} W_v$, 
and let $\D_{v,u}$ be the change of opinion at $v$ if edge $(v,u)$ is selected. Then
\begin{align*}
\E W_v'&= P(v,v) \pi_v X_v + \sum_{\substack{u \sim v \\ u \ne v}} P(v,u)\pi_v (X_v+\D_{v,u})\\
&= X_v \pi_v (\sum_{u \sim v} P(v,u))+\sum_{\substack{u \sim v \\ u \ne v}} P(v,u)\pi_v \D_{v,u}\\
&\\
\E W'&= W+\sum_{e=(u,v)}(\pi_vP(v,u)\D_{v,u}+ \pi_uP(u,v) \D_{u,v})\\
&= W.
\end{align*}
The last line follows because $\D_{u,v}=-\D_{v,u}$ and
$\pi_vP(v,u)=\pi_uP(u,v)$ by detailed balance of reversible Markov processes.

As an example let $L_u$ be the propensity for vertex $u$ to change its opinion at a given step.
Then, assuming $0<L_u \le 1$, $P(u,u)=1-L_u$ and $P(u,v)=L_u/d(u)$, and $\pi(v)=d(v)/CL(v)$ where $C=\sum(L_v/d_v))$. This allows a more sociological twist on the final average opinion, in which a small propensity reflects  a more conservative attitude; a
general disinclination towards change.}

{\bf Further work.}
One central problem is to fully characterize those classes of graphs for which the process converges to the integer average w.h.p. Yet another is to find a  formal method to replace the current ad hoc approach to the analysis of the process.
Further directions of work include the option for absolute changes larger than one based on the difference of opinion between the edge end points.
Yet another direction is the analysis of  robustness under adversarial conditions.

{\red Finally, a curiosity.} Consider a connected bipartite graph $G$ where one bipartition has value zero and the other $i >0$. If $i=2\ell$ then incremental 
{\purple synchronous} voting reaches consensus in $\ell$ steps, whereas pull voting oscillates indefinitely.

}
}

%%%%%%%%%%%%%%%%%%%%%%%%%%%%%%%%
%(TK) I have updated the reference to match the formatted BA submission. We can use the original version by commenting out \bibliography{ref} and \if0-\if parts.
\bibliography{ref}
%%%%%%%%%%%%%%%%%%%%%%%%%%%%%%%%%
\if0

\fi

\newpage
\section*{Appendix}
\appendix
\section{Proof of Lemma \ref{Props}}
We repeat the lemma for convenience.
\label{ProofGnpProperties}

\begin{lemma}[Lemma \ref{Props}]\label{Props-repeat}
\LemmaGNPStructure
\end{lemma}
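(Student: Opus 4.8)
The plan is to derive P1--P3 from standard Chernoff bounds for binomial random variables together with union bounds over the appropriate families of vertex sets; the hypothesis $np\ge\log^{1+\e}n$ is precisely what gives the slack needed for these union bounds to converge. Connectivity of $G\in G_{n,p}$ is the classical Erd\H{o}s--R\'enyi threshold result, which applies here since $np\ge\log^{1+\e}n$ is well above $\log n$. For P1, each degree satisfies $d(v)\sim\mathrm{Bin}(n-1,p)$ with mean $(n-1)p=np(1+o(1))$, so a Chernoff bound gives $\Pr\brac{|d(v)-(n-1)p|\ge C\sqrt{np\log n}}\le 2e^{-C^2(\log n)/3}\le n^{-2}$ for a large enough constant $C$; a union bound over the $n$ vertices then gives $d(v)=np+O(\sqrt{np\log n})$ simultaneously for all $v$. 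Since $np\ge\log^{1+\e}n$, the relative error is $O\brac{\sqrt{(\log n)/(np)}}=O(\log^{-\e/2}n)$, so $d(v)=np(1+O(\log^{-\e/2}n))$; summing, $2m=\sum_v d(v)=n^2p\,(1+O(\log^{-\e/2}n))$, whence $\pi_v=d(v)/(2m)=\tfrac1n(1+O(\log^{-\e/2}n))=\tfrac1n+O\brac{\tfrac{1}{n\log^{\e/2}n}}$.

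For P2, fix disjoint $A,B$ with $|A|=a$, $|B|=b$; then $X_{AB}\sim\mathrm{Bin}(ab,p)$ has mean $\mu=abp$, and a Chernoff bound gives $\Pr(|X_{AB}-\mu|\ge\mu/2)\le 2e^{-\mu/12}$. I would union-bound over all ordered pairs of disjoint vertex sets, of which there are at most $3^n$ (each vertex is in $A$, in $B$, or in neither). When $a,b\ge\d n$ with $\d\ge5/\sqrt{np}$ one has $\mu\ge\d^2n^2p\ge25n$, so the total failure probability is at most $3^n\cdot 2e^{-\mu/12}\le 2e^{-n(25/12-\ln 3)}\raz$, which gives the two-sided bound $\mu/2\le X_{AB}\le3\mu/2$ simultaneously over all such pairs.

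For P3(i)--(ii), for a fixed $S$ with $|S|=s$ the number of induced edges satisfies $e(S)\sim\mathrm{Bin}\brac{\binom{s}{2},p}$ with mean $\mu_s=\binom{s}{2}p\le s^2p/2$, and for any threshold $k$ we have $\Pr(e(S)\ge k)\le\binom{\binom{s}{2}}{k}p^k\le(e\mu_s/k)^k$. Union-bounding over the $\binom{n}{s}\le(en/s)^s$ sets of size $s$, the failure probability is at most $\exp\!\big(s\log(en/s)-k\log(k/(e\mu_s))\big)$, and the stated thresholds $X_S$ are designed so the second term dominates: in (i), with $s=n/\om$ and $k=e^2s^2p$ one has $k/\mu_s\ge 2e^2$, hence the per-set bound is $(2e)^{-k}$, and the hypothesis $\om\log\om\le np$ makes $k\log(2e)$ dominate $s\log(en/s)$ with room to spare; in (ii) one substitutes the $s$-dependent threshold $k=s\sqrt{3d\log(ne/s)}$, checks the inequality for each $1\le s\le n/\om$, and sums the geometrically decaying failure probabilities over $s$, which needs $np\ge\log^{1+\th}n$. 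For P3(iii) I would additionally lower-bound the cut $e(S,V\sm S)\sim\mathrm{Bin}(s(n-s),p)$, whose mean is $(1-o(1))snp$: a Chernoff bound plus the same $(en/s)^s$ union bound (which closes since $np\ge\log^{1+\th}n\gg\log\om$ for $\om=O(\log n)$) gives $e(S,V\sm S)\ge snp/2$ for all $|S|\le n/\om$; combining with the bound from (i) yields $X_S/e(S,V\sm S)\le e^2s^2p/(snp/2)=2e^2s/n\le 2e^2/\om=O(1/\om)$.

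The part I expect to be the real obstacle is the bookkeeping in P3: one must verify, with exactly the stated thresholds $X_S$ and exactly the stated relation between $\om$ (or $\th$) and $np$, that $k\log(k/(e\mu_s))$ beats the entropy term $s\log(en/s)$, and in (ii) this has to hold uniformly over the whole range $1\le s\le n/\om$, where both quantities vary with $s$; the cleanest route is to split into a ``very small $s$'' regime (where $\log(en/s)\approx\log n$ but $\mu_s$ is tiny) and an ``$s$ close to $n/\om$'' regime, and handle each separately. Everything else --- P1, P2, and the cut lower bound used for P3(iii) --- is a routine Chernoff-plus-union-bound computation.
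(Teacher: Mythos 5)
Your treatment of P1, P2 and P3(i) is, up to cosmetic choices (you count pairs by $3^n$ where the paper uses $4^n$; you use the $\binom{N}{k}p^k\le(e\mu/k)^k$ form of the tail bound where the paper uses $(e/\a)^{\a\mu}$), the same first-moment/Chernoff-plus-union-bound computation as the paper's, and it is correct; you even supply the derivation of $\pi_v=\tfrac1n+O(\tfrac{1}{n\log^{\e/2}n})$ from the degree concentration, which the paper leaves implicit.

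Where you genuinely diverge is P3(ii), and this is also where your plan has a real gap. The paper does \emph{not} bound $e(S)$ by a direct upper-tail estimate: it proves a Chernoff \emph{lower} bound on the cut $X_{S,V\sm S}$ (with $\e=\sqrt{9\log(ne/s)/d}$ chosen so the union bound over $\binom{n}{s}$ sets closes), and then converts this into an upper bound on induced edges via the handshake identity $2e(S)=\sum_{v\in S}d(v)-X_{S,V\sm S}$ together with P1. Your direct route cannot deliver the stated threshold throughout the range $1\le s\le n/\om$: for $s$ near $n/\om$ with $\om=O(\log n)$, one has $\E\, e(S)=\binom{s}{2}p\approx\tfrac{s}{2}\cdot\tfrac{s}{n}\,d\ge\tfrac{s d}{2\om}$, and this exceeds $s\sqrt{3d\log(ne/s)}$ as soon as $d\ge C\om^2\log\om$ (e.g.\ already for $np\ge\log^{2.5}n$ and $s=n/\log n$). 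In that regime the quantity $e\mu_s/k$ in your bound exceeds $1$, the tail estimate is vacuous, and no splitting into ``small $s$'' and ``large $s$'' regimes can repair it, because the event $e(S)\ge X_S$ is then overwhelmingly \emph{likely} rather than unlikely. The honest conclusion available from either argument is $e(S)=O\brac{s^2p+s\sqrt{d\log(ne/s)}}$ (the paper's own computation silently drops the $s^2p$ term coming from $\sum_{v\in S}d(v)-X_{S,V\sm S}$ as well, so the discrepancy is not only yours), and that weaker form is all that is used downstream. Separately, in P3(iii) you combine the cut lower bound only with the bound from (i); that covers $s$ of order $n/\om$ but not the very small sets also allowed by ``$|S|\le n/\om$'', for which $e^2s^2p$ is not a valid upper bound on $e(S)$ (two adjacent vertices already violate it when $p=o(1)$) --- for those you must fall back on the (ii)-type bound, which is exactly how the paper argues (iii).
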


% \begin{lemma}[Lemma \ref{Props}] %\label{Props}

% {\blue Let $G \in G_{n,p}$, where $np \ge \log^{1+\e} n$ for some constant $\e>0$. } The following properties hold w.h.p..

% % \vspace{-0.2in}

% % \vspace{-0.2in}

% \begin{enumerate}[P0.]
% {\blue \item  $G$ is connected and all vertices $v$ have stationary distribution $\pi_v=\frac 1n + O\brac{ 
% \frac{1}{n\log^{\e/2} n}}$.}
% \item Let $\d\ge 5/\sqrt{np}$. For any pair of disjoint vertex sets $A$, $B$, with { $|A| \ge \d n, \; |B| \ge \d n$}, the number of edges $X_{AB}$  between $A$ and $B$ satisfies
%     $\mu/2 \le  X_{AB} \le 3 \mu /2$ where $\mu=|A||B| p$.
% {
% \item (i) For $\om \ge e$, $\om \log \om \le np$, no vertex set $S$ of size $s=n/\om$ induces more than $X_S=e^2 s^2p$ edges.

%     (ii) Let $d=np$. No set $S$, $|S| \le n/\om$ induces more than $X_S=s \sqrt{3 d \log ne/s}$ edges.

%     (iii) Provided $\om = O(\log n)$, and $np=d \ge \log^{1+\th} n$, the ratio $X_S/X_{S, V-S}$ is at most $O(1/\om)$, the value achieved in P2.(i) above.}
% \end{enumerate}
% \end{lemma}

\begin{proof}$\;$\\
{\bf P1.}
{An application of the Chernoff-Hoeffding inequality (Lemma \ref{lem:Chernoff}) shows that for all vertices $v$,
$d(v)=np +O(\sqrt{np \log n})$.}

\noindent
{\bf P2.}
for given disjoint $A$, $B$ Let $|A|=an,\; |B|=bn$ then the Chernoff-Hoeffding inequality 
(Lemma \ref{lem:Chernoff}.\ref{symmetric})
with $\e=1/2$ implies 
\[
P_{AB}=\Pr(X_{AB} \not \in [\mu/2, 3\mu/2]) \le 2 e^{- \mu/12}.
\]
We say that $A,B$ is a {\em  bad pair}, if $|A| \ge \d n$ and $|B|\ge\d n$ but $X_{AB} \not \in [\mu/2, 3\mu/2]$. Then
\begin{align*}
\lefteqn{\E(\text{number of bad pairs}) = \sum_{A,B} P_{AB} \le} \;\;\;\;\;\;\;\; & \\ 
& \le 4^n\, 2 e^{-\d^2n^2p/12} \le 2\brac{4e^{-\d^2 np/12}}^n
\le 2\brac{4e^{-2}}^n=o(1).
\end{align*}

\noindent
{\bf P3.}
(i) Let $X_S$ denote the number of edges induced by a set $S$, and 
$\mu=\E(X_S) = \binom{s}{2}p$ the expected number. 
By the Chernoff-Hoeffding inequality (Lemma \ref{lem:Chernoff}.\ref{largeDeviation}), for $\a \ge e$ and $s\ge 3$,
\begin{equation}\label{nhcnwqekjcfh4w}
P_S= \Pr(X_S \ge \a \mu ) \le (e/\a)^{\a \mu} \le (e/\a)^{\a s^2 p /3}.
\end{equation}
Say a set $S$ of size $s$ is a {\em bad set}, if it
induces more than $e^2s^2p \geq e^2 \mu$ edges. Then, using~\eqref{nhcnwqekjcfh4w} with $\a=e^2$,
\begin{align*}
\E(\text{number of bad sets of size $s$})\le& \binom{n}{s} e^{-e^2 s^2 p /3}\\
\le &\brac{\frac{ne}{s}e^{-e^2 sp/3}}^s=\brac{\exp \{ -e^2 sp/3 + \log ne/s\} }^s\\
\le &\brac{e^{-(e^2/3) \log \om  + \log \om e }}^s 
\le \brac{e^{-(e^2/3-2) \log \om}}^s 
=o(1).
\end{align*}
The last inequality follows from the assumption that $\om \ge e$.
% The exponent on the RHS of the second line is monotone decreasing with $s$, and 
The size $s=n/\om$ is minimized when $\om \log \om = np$, implying that $s \ge (\log\log n)/2$. 
% Provided $\om \ge e$ the exponent on the last line is at least a negative constant.
Sum the above over all $s$ greater than this minimum value to conclude that the expected number of bad sets
of sizes in the required range is~$o(1)$.

{
\noindent
(ii) Let $\mu=\E X_{S,V-S}=s(n-s)p$. Then, as $n-s=n (1-o(1))$, $\mu =sd(1-o(1))$ and
\[
P_{S,V-S}=\Pr (X_{S,V-S} \le (1-\e) \mu) \le e^{-\e^2 sd/3}.
\]
Thus
\begin{align*}
\E(\text{number of bad sets } S) \le \binom{n}{s}P_{S,V-S}
\le \brac{\frac{ne}{s} e^{-\e^2 d/3}}^s =o(1),
\end{align*}
provided  $\e \ge \sqrt{\frac{9 \log ne/s}{d}}$.

As the total degree of $S$ is $sd$, no such $S$ can induce as many as
\[
X_S =\e s(n-s)p \le s \sqrt{9 d \log ne/s}
\]
edges.

(iii) Thus for $s \le n/\om$,
\[
\max \frac{X_S}{X_{S,V-S}} = O \bfrac{s \sqrt{d \log n/s}}{sd}=O
 \brac{\sqrt{\frac{\log n}{d}}} =O\bfrac{1}{\log^\th n} = O \bfrac{1}{\om},
\]
provided $\om = O(\log n)$, and $np=d \ge \log^{1+\th} n$.
}

%%%%%%%%%%%%%%%%%%%
\if0 %%Previous staffs for Kn
%%%%%%%%%%%%%%%%%%%
\section{Proof of Lemma ~\ref{lem:threeleft}}
\label{appendix_Lemma5_5}

\begin{proof}[Proof of Lemma~\ref{lem:threeleft} (\ref{lab:difbig})]
Our assumption implies that $N_3(t)+N_1(0)-N_3(0)- n^{5/6} \geq 0+2n^{5/6}- n^{5/6}=n^{5/6}$ holds for any $t\geq 0$.
From Lemma~\ref{lem:three_opinions_Mar},
\begin{align}
\Pr\left[N_1(t)\leq n^{5/6}\right]
&\leq \Pr\left[N_1(t)\leq N_3(t)+N_1(0)-N_3(0) - n^{5/6}\right]
%\leq 2\exp\left(-\frac{n^{5/3}}{2nt}\right)=
\leq 2\exp\left(-\frac{n^{2/3}}{2t}\right)
\label{eq:A1lower}
\end{align}
holds for any $t\geq 0$.
From \eqref{eq:ExA3} and \eqref{eq:A1lower}, it holds for any $t\geq 0$ that
\begin{align}
&\E\left[N_3(t+1)\right] \nonumber \\
&=\sum_{0\leq a_1,a_3\leq n}\E\left[N_3(t+1)\middle| \begin{array}{l}N_1(t)=a_1,\\N_3(t)=a_3\end{array}\right]\Pr\left[\begin{array}{c}N_1(t)=a_1,\\N_3(t)=a_3\end{array}\right] \nonumber \\
&=\sum_{\substack{a_1\geq n^{5/6},\\0\leq a_3\leq n}}a_3\left(1-\frac{a_1}{n}\right)\Pr\left[\begin{array}{c}N_1(t)=a_1,\\N_3(t)=a_3\end{array}\right]
+\sum_{\substack{a_1< n^{5/6},\\0\leq a_3\leq n}}a_3\left(1-\frac{a_1}{n}\right)\Pr\left[\begin{array}{c}N_1(t)=a_1,\\N_3(t)=a_3\end{array}\right] \nonumber \\
&\leq \left(1-n^{-1/6}\right)\sum_{\substack{a_1\geq n^{5/6},\\0\leq a_3\leq n}}a_3\Pr\left[\begin{array}{c}N_1(t)=a_1,\\N_3(t)=a_3\end{array}\right]+
n\sum_{\substack{a_1< n^{5/6},\\0\leq a_3\leq n}}\Pr\left[\begin{array}{c}N_1(t)=a_1,\\N_3(t)=a_3\end{array}\right] \nonumber \\
&\leq \left(1-n^{-1/6}\right)\sum_{0\leq a_3\leq n}a_3\Pr\left[N_3(t)=a_3\right]+n\sum_{a_1< n^{5/6}}\Pr\left[N_1(t)=a_1\right] \nonumber \\
&\leq \left(1-n^{-1/6}\right)\E\left[N_3(t)\right]+2n\exp\left(-\frac{n^{2/3}}{2t}\right).
\label{eq:A3decrease}
\end{align}
Let $T=\lceil 2n^{1/6}\log n \rceil \leq \sqrt{n}$.
Applying \eqref{eq:A3decrease}, we obtain
\begin{align*}
\Pr\left[N_3(T)>0\right]
&=\Pr\left[N_3(T)\geq 1\right]
\leq \E[N_3(T)]\\
&\leq \left(1-n^{-1/6}\right)\E[N_3(T-1)]+2n\exp\left(-\frac{n^{2/3}}{2(T-1)}\right)\\
&\leq \cdots \leq \left(1-n^{-1/6}\right)^T\E[N_3(0)]+2n\sum_{t=1}^{T-1}\exp\left(-\frac{n^{2/3}}{2t}\right)\\
&\leq \frac{n}{n^2}+2nT\exp\left(-\frac{n^{2/3}}{2T}\right)\leq \frac{1}{n}+\frac{2n^{3/2}}{\mathrm{e}^{n^{1/6}/2}}.
\end{align*}
%%%%%
\if0
First, consider the case of $N_1\geq n^{5/6}$.
Then, we have
\begin{align}
    \E\left[N_3'\right]&=\left(1-\frac{N_1}{n}\right)N_3 \leq (1-n^{-1/6})N_3.
    \label{eq:dec_EA3}
\end{align}
Let $T=\lceil 2n^{1/6}\log n \rceil \leq \sqrt{n}$.
Under the event $\mathcal{A}$, we have $N_1(t)\geq N_3(t)+N_1(0)-N_3(0)-n^{5/6}\geq 0+2n^{5/6}- n^{5/6}\geq n^{5/6}$ for any $t\leq T$.
Note that $N_1(0)-N_3(0)\geq 2n^{5/6}$ holds from the assumption.
Hence, we have
\begin{align*}
    \E\left[N_3(T)\mid\mathcal{A}\right]
    &=\sum_{x=0}^n\E\left[N_3(T)\mid\mathcal{A}, N_3(T-1)=x\right]\Pr\left[N_3(T-1)=x\mid\mathcal{A}\right]\\
    &\leq \sum_{x=0}^nx (1-n^{-1/6})\Pr\left[N_3(T-1)=x\mid\mathcal{A}\right]\\
    &=(1-n^{-1/6})\E\left[N_3(T-1)\mid\mathcal{A}\right]\\
    &\leq \cdots \leq (1-n^{-1/6})^T\E\left[N_3(0)\mid\mathcal{A}\right]
    \leq \frac{1}{n^2}n=\frac{1}{n}.
\end{align*}
Thus, from the Markov inequality and \eqref{eq:smallevent}, we obtain
\begin{align*}
\Pr\left[N_3(T)>0\right]
&\leq \E\left[N_3(T)\right]
=\E\left[N_3(T)\mid \mathcal{A}\right]\Pr\left[\mathcal{A}\right]+\E\left[N_3(T)\mid \mathcal{A}^C\right]\Pr\left[\mathcal{A}^C\right]
\leq \frac{1}{n}+\frac{2n\sqrt{n}}{\mathrm{e}^{n^{1/6}/2}}.
\end{align*}
\fi
%%%%%%
\end{proof}

\begin{proof}[Proof of Lemma~\ref{lem:threeleft} (\ref{lab:difsmall})]
First, we have
\begin{align}
&\Pr\left[N_3(t+1)\geq \left(1-n^{-1/3}\right)N_3(t), N_3(t)\geq 2n^{5/6}\right] \nonumber \\
&\leq \sum_{a_1\geq n^{5/6}, a_3\geq 2n^{5/6}}\Pr\left[N_3(t+1)\geq \left(1-n^{-1/3}\right)N_3(t)\middle| \begin{array}{l}N_1(t)=a_1, \\N_3(t)=a_3\end{array}\right]\Pr\left[\begin{array}{l}N_1(t)=a_1,\\ N_3(t)=a_3\end{array}\right] \nonumber \\
&\hspace{1em}+\sum_{a_1< n^{5/6}, a_3\geq 2n^{5/6}}\Pr\left[N_1(t)=a_1, N_3(t)=a_3\right]. \label{eq:starta3large}
%&\leq \sum_{a_1\geq n^{5/6}, a_3\geq 2n^{5/6}}\Pr\left[N_3(t+1)\geq \left(1+n^{-1/6}\right)\left(1-n^{-1/6}\right)N_3(t), N_1(t)=a_1, N_3(t)=a_3\right]
%&\leq \sum_{\substack{a_1\geq n^{5/6},\\a_3\geq 2n^{5/6}}}\Pr\left[N_3(t+1)\geq \left(1-n^{-1/3}\right)N_3(t), N_1(t)=a_1, N_3(t)=a_3\right]\Pr\left[\begin{array}{l}N_1(t)=a_1,\\ N_3(t)=a_3\end{array}\right]
%&\leq \sum_{\substack{a_1\geq n^{5/6},\\a_3\geq 2n^{5/6}}}\Pr\left[\begin{array}{l}N_3(t+1)\\\geq \left(1-n^{-1/3}\right)N_3(t)\end{array}\middle| \begin{array}{l}N_1(t)=a_1,\\ N_3(t)=a_3\end{array}\right]\Pr\left[\begin{array}{l}N_1(t)=a_1,\\ N_3(t)=a_3\end{array}\right]
%+\sum_{\substack{a_1< n^{5/6},\\a_3\geq 2n^{5/6}}}\Pr\left[\begin{array}{l}N_1(t)=a_1,\\ N_3(t)=a_3\end{array}\right]
\end{align}
Suppose $a_1\geq n^{5/6}$ and $a_3\geq 2n^{5/6}$.
From \eqref{eq:ExA3}, we have $\E\left[N_3(t+1)\middle| N_1(t)=a_1,N_3(t)=a_3\right]=(1-a_1/n)a_3\leq (1-n^{-1/6})a_3$.
Applying the Chernoff inequality (Lemma~\ref{lem:Chernoff}) yields
\begin{align}
&\Pr\left[N_3(t+1)\geq \left(1-n^{-1/3}\right)N_3(t)\middle| \begin{array}{l}N_1(t)=a_1, \\N_3(t)=a_3\end{array}\right] \nonumber \\
&\leq \Pr\left[N_3(t+1)\geq \left(1+n^{-1/6}\right)\E\left[N_3(t+1)\middle|\begin{array}{l}N_1(t)=a_1, \\N_3(t)=a_3\end{array}\right]\middle| \begin{array}{l}N_1(t)=a_1, \\N_3(t)=a_3\end{array}\right] \nonumber \\
&\leq \exp\left(-\frac{1}{3}\frac{1}{n^{1/3}}\left(1-\frac{1}{n^{1/6}}\right)a_3\right)
\leq \exp\left(-\frac{\sqrt{n}}{3}\right). \label{eq:a3chernoff}
\end{align}
Recall our assumption of $0\leq N_1(0)-N_3(0)$. For $a_1< n^{5/6}$ and $a_3\geq 2n^{5/6}$ and $t\geq 0$, Lemma~\ref{lem:three_opinions_Mar} implies
\begin{align}
\Pr\left[N_1(t)=a_1, N_3(t)=a_3\right]
&\leq \Pr\left[N_1(t)\leq N_3(t)-n^{5/6}\right]\nonumber \\
&\leq \Pr\left[N_1(t)- N_3(t)\leq N_1(0)-N_3(0)-n^{5/6}\right]\nonumber \\
&\leq 2\exp\left(-\frac{n^{2/3}}{2t}\right). \label{eq:largea1a3diff}
\end{align}
Combining \eqref{eq:starta3large}, \eqref{eq:a3chernoff}, and \eqref{eq:largea1a3diff}, it holds for any $t\geq 0$ that
\begin{align}
\Pr\left[N_3(t+1)\geq \left(1-n^{-1/3}\right)N_3(t), N_3(t)\geq 2n^{5/6}\right]
\leq \exp\left(-\frac{\sqrt{n}}{3}\right)+2n^2\exp\left(-\frac{n^{2/3}}{2t}\right).
\label{eq:a3cons}
\end{align}
Let $T=\lceil 2n^{1/3}\log n\rceil\leq \sqrt{n}$. From the union bound and \eqref{eq:a3cons},
\begin{align*}
\Pr\left[\bigcap_{t=1}^T\left\{N_3(t)\geq 2n^{5/6}\right\}\right]
&=\Pr\left[\bigcap_{t=1}^T\left\{N_3(t)\geq 2n^{5/6}\right\},\bigcap_{t=1}^T\left\{N_3(t)< \left(1-n^{-1/3}\right)N_3(t-1)\right\}\right]\\
&+\Pr\left[\bigcap_{t=1}^T\left\{N_3(t)\geq 2n^{5/6}\right\},\bigcup_{t=1}^T\left\{N_3(t)\geq \left(1-n^{-1/3}\right)N_3(t-1)\right\}\right]\\
&=\Pr\left[\bigcup_{t=1}^T\left\{\left\{N_3(t)\geq \left(1-n^{-1/3}\right)N_3(t-1)\right\},\bigcap_{t=1}^T\left\{N_3(t)\geq 2n^{5/6}\right\}\right\}\right]\\
%&\leq \sum_{t=1}^T\Pr\left[N_3(t)\geq \left(1-n^{-1/3}\right)N_3(t-1),\bigcap_{t=1}^T\left\{N_3(t)\geq 2n^{5/6}\right\}\right]\\
&\leq \sum_{t=1}^T\Pr\left[N_3(t)\geq \left(1-n^{-1/3}\right)N_3(t-1),N_3(t)\geq 2n^{5/6}\right]\\
&\leq \sqrt{n}\exp\left(-\frac{\sqrt{n}}{3}\right)+2n^2\sqrt{n}\exp\left(-\frac{n^{1/6}}{2}\right).
\end{align*}

Hence, $N_3(t)\leq 2n^{5/6}$ holds w.h.p.~for some $1\leq t\leq T$.

Note that the event $\bigcap_{1\leq t\leq T}\left\{N_3(t)< (1-n^{-1/3})N_3(t-1)\right\}$ implies $N_3(T)< n (1-n^{-1/3})^T < 2n^{5/6}$.
Hence, $\Pr\left[\bigcap_{1\leq t\leq T}N_3(t)\geq 2n^{5/6},\bigcap_{1\leq t\leq T}\left\{N_3(t)< (1-n^{-1/3})N_3(t-1)\right\}\right]=0$.

Furthermore, $N_1(t)\leq N_3(t)+N_1(0)-N_3(0)+n^{5/6}\leq 5n^{5/6}$ holds w.h.p.~from Lemma~\ref{lem:three_opinions_Mar}.

\if0
First, consider the case of $N_1\geq n^{5/6}$ and $N_3\geq 2n^{5/6}$.
Then, $\E[N_3']\leq (1-n^{-1/6})N_3$ holds (see \ref{eq:dec_EA3}) and we have
\begin{align}
    \Pr\left[N_3'\geq (1-n^{-1/3})N_3\right]
    &= \Pr\left[N_3'\geq (1+n^{-1/6})(1-n^{-1/6})N_3\right] \nonumber \\
    &\leq \exp\left(-\frac{1}{3}\left(\frac{1}{n^{1/6}}\right)^2\left(1-\frac{1}{n^{1/6}}\right)2n^{5/6}\right)
    \leq \exp\left(-\frac{\sqrt{n}}{3}\right)
    \label{eq:decreaseA3}
\end{align}
from the Chernoff inequality (Lemma~\ref{lem:Chernoff}).

Now, let $\tau= \min\{t\geq 0\mid N_3(t)\leq 2n^{5/6}\}$ be the first time that $N_3(t)\leq 2n^{5/6}$ holds.
Let $T= \lceil 2n^{1/3}\log n\rceil\leq \sqrt{n}$.
Then, under the events $\mathcal{A}$ given in \eqref{CalA} and $T<\tau$,
%above settings imply that
$N_1(t)\geq N_3(t)+N_1(0)-N_3(0)-n^{5/6}\geq 2n^{5/6}+0-n^{5/6}=n^{5/6}$
and
$N_3(t)\geq 2n^{5/6}$
hold for any $t\leq T$.
Note that we have $0\leq N_1(0)-N_3(0)\leq 2n^{5/6}$ from the assumption.

Hence, from \eqref{eq:smallevent} and \eqref{eq:decreaseA3}, we have
\begin{align}
\Pr\left[\tau>T\right]
&=\Pr\left[\tau>T,\mathcal{A}, \bigcup_{1\leq t\leq T}\left\{N_3(t)\geq (1-n^{-1/3})N_3(t-1)\right\}\right] \nonumber \\
&\hspace{1em}+\Pr\left[\tau>T,\mathcal{A}, \bigcap_{1\leq t\leq T}\left\{N_3(t)< (1-n^{-1/3})N_3(t-1)\right\}\right]+\Pr\left[\tau>T,\mathcal{A}^C\right]\nonumber \\
&\leq \Pr\left[\bigcup_{1\leq t\leq T}\left\{N_3(t)\geq (1-n^{-1/3})N_3(t-1)\right\}\middle|\tau>T,\mathcal{A}\right]+\frac{2\sqrt{n}}{\mathrm{e}^{n^{1/6}/2}}\nonumber \\
&\leq T\exp\left(-\frac{\sqrt{n}}{3}\right)+\frac{2\sqrt{n}}{\mathrm{e}^{n^{1/6}/2}} = \frac{\lceil 2n^{1/3}\log n\rceil}{\mathrm{e}^{\sqrt{n}/2}}+\frac{2\sqrt{n}}{\mathrm{e}^{n^{1/6}/2}}.
\label{eq:conc_ii}
\end{align}
Note that the event $\bigcap_{1\leq t\leq T}\left\{N_3(t)< (1-n^{-1/3})N_3(t-1)\right\}$ implies $N_3(T)\leq n (1-n^{-1/3})^T \leq 2n^{5/6}$. {\blue which implies the first event on the second line is empty ?? Please explain a little more.}
{\cyan Exactly, I add the following:
Since the event $\tau>T$ implies $N_3(T)> 2n^{5/6}$ from the definition of $\tau$, we have $\Pr\left[\tau>T,\mathcal{A}, \bigcap_{1\leq t\leq T}\left\{N_3(t)< (1-n^{-1/3})N_3(t-1)\right\}\right]=0$.}
From \eqref{eq:conc_ii}, $N_3(t)\leq 2n^{5/6}$ holds w.h.p.~for some $1\leq t\leq T$.
Furthermore, $N_1(t)\leq N_3(t)+N_1(0)-N_3(0)+n^{5/6}\leq 5n^{5/6}$ holds with probability $\Pr[\mathcal{A}]\geq 1-2\sqrt{n}/\mathrm{e}^{n^{1/6}/2}$.
\fi
\end{proof}

\section{Proof of Lemma \ref{lem:three_opinions_Mar}}
\label{appendix_Lemma5_6}
% We repeat the lemma for convenience.
\begin{lemma}[\cref{lem:three_opinions_Mar}]
%\label{lem:three_opinions_Mar}
Suppose that $X_v(0)\in \{1,2,3\}$ holds for all $v\in V$.
Then, for any $T\geq 0$ and $\epsilon>0$,
\begin{align*}
    \Pr\left[\Bigl|\bigl(N_1(T)-N_3(T)\bigr)-\bigl(N_1(0)-N_3(0)\bigr)\Bigr|\geq \epsilon\right]\leq 2\exp\left(-\frac{\epsilon^2}{2nT}\right).
\end{align*}
\end{lemma}
\begin{proof}
Let $Z=N_1-N_3$ and $Z'=N_1'-N_3'$.
From \eqref{eq:ExA1} and \eqref{eq:ExA3},
\begin{align}
    \E\left[Z'\right]
    &=\E\left[N_1'-N_3'\right]
    =\left(1-\frac{N_3}{n}\right)N_1-\left(1-\frac{N_1}{n}\right)N_3
    =N_1-N_3=Z.
    \label{eq:A1A3Martingale}
\end{align}
Write $Z(t)=N_1(t)-N_3(t)$.
Equality \eqref{eq:A1A3Martingale} implies that $\E[Z(t+1)|X(t)]=Z(t)$ holds for any $t$, i.e., $(Z(t))_{t=0,1,2,...}$ is a martingale.

Let $I_v(t)= \mathbbm{1}_{X_v(t)=1}-\mathbbm{1}_{X_v(t)=3}$
and $\Delta_v(t)= I_v(t)-I_v(t-1)$, i.e., $\sum_{v\in V}I_v(t)=Z(t)$ and $\sum_{v\in V}\Delta_v(t)=Z(t)-Z(t-1)$.
It is easy to see that $I_v(t)\in \{-1,0,1\}$ and $\Delta_v(t)\in \{-1,0,1\}$ holds.
%{\blue should be  $\Delta_v(t)\in \{-2,-1,0,1,2\}$ ??}
%{\cyan $\Delta_v(t)=2$ implies $I_v(t)= 1$ and $I_v(t-1)=-1$, i.e., $X_v(t)=1$ and $X_v(t-1)=3$. But this event does not occur since $X_v(t)-%X_v(t-1)\in \{-1,0,1\}$. $\Delta_v(t)=-2$ does not occur from the same reason.}
Then, for any $\lambda>0$, we have
\begin{align}
    \E\left[\mathrm{e}^{\lambda (Z(t+1)-Z(t))}\middle|X(t)\right]
    &=\E\left[\mathrm{e}^{\lambda ((Z(t+1)-Z(t))-\E\left[Z(t+1)-Z(t)\mid X(t)\right])}\middle|X(t)\right] \nonumber\\
    &=\E\left[\mathrm{e}^{\lambda \sum_{v\in V}(\Delta_v(t+1)-\E\left[\Delta_v(t+1)\mid X(t)\right])}\middle|X(t)\right]\nonumber\\
    &=\prod_{v\in V}\E\left[\mathrm{e}^{\lambda (\Delta_v(t+1)-\E\left[\Delta_v(t+1)\mid X(t)\right])}\middle|X(t)\right]\nonumber\\
    &\leq \prod_{v\in V}\mathrm{e}^{\frac{\lambda^2}{2}}=\mathrm{e}^{\frac{\lambda^2n}{2}}. \label{eq:indep_bound}
\end{align}
In the third equality, we use the independence of $\Delta_v(t+1)$ under the condition of $X(t)$.
Combining Lemmas \ref{lem:AHineq} and \ref{eq:indep_bound}, we obtain the claim.
\end{proof}
%
%%%%%%%%%%%%%%%%%%%
\fi %%Previous staffs for Kn
%%%%%%%%%%%%%%%%%%%

%%%

\section{
\texorpdfstring
{Asynchronous vertex process on $K_n$}
{Asynchronous vertex process on Kn}
}
\label{KnAsync}
\begin{lemma}
\label{lem:extreme_async}
Let $s= \min_{v\in V}X_v(0)$ and $\ell= \max_{v\in V}X_v(0)$ be the smallest and the largest opinions in the  initial round, respectively.
Suppose $\ell\geq s+3$.
Then, $N_s(T)N_\ell(T)=0$ w.h.p.~within $T=O(n\log n)$ steps.
\end{lemma}
\begin{proof}
Let $Z=N_sN_{\ell}$ and $Z'=N_s'N_\ell'$.
Then, we have
\begin{align}
\E\left[Z'-Z\right]
&=\E\left[N_s'N_\ell'-N_sN_{\ell}\right] \nonumber \\
&=\left((N_s+1)N_{\ell}-N_sN_{\ell}\right)\Pr\left[N_s'=N_s+1, N_\ell'=N_{\ell}\right] \nonumber \\
&+\left((N_s-1)N_{\ell}-N_sN_{\ell}\right)\Pr\left[N_s'=N_s-1, N_\ell'=N_{\ell}\right] \nonumber \\
&+\left(N_s(N_{\ell}+1)-N_sN_{\ell}\right)\Pr\left[N_s'=N_s, N_\ell'=N_{\ell}+1\right] \nonumber \\
&+\left(N_s(N_{\ell}-1)-N_sN_{\ell}\right)\Pr\left[N_s'=N_s, N_\ell'=N_{\ell}-1\right] \nonumber \\
&=N_{\ell} \frac{N_{s+1}}{n}\frac{N_{s}}{n}-N_{\ell} \frac{N_{s}}{n}\frac{n-N_{s}}{n}
+N_s \frac{N_{\ell-1}}{n}\frac{N_{\ell}}{n}-N_s \frac{N_{\ell}}{n}\frac{n-N_{\ell}}{n} \nonumber \\
&=\frac{Z}{n^2}\left(N_{s+1}-n+N_{s}+N_{\ell-1}-n+N_{\ell}\right) \label{eq:async_N1N3} \\
&\leq -\frac{Z}{n}. \label{eq:Ex_extreme}
\end{align}
Note that $N_{s+1}+N_{s}+N_{\ell-1}+N_{\ell}\leq n$ holds.
Let $Z(t)=N_s(t)N_\ell(t)$.
\eqref{eq:Ex_extreme} implies that $\E[Z(t+1)]\leq (1-1/n)\E[Z(t)]$ holds for any $t\geq 0$.
Thus, taking $T=\lceil 3n\log n\rceil$, we obtain
\begin{align*}
\Pr\left[Z(T)>0\right]
&=\Pr\left[Z(T)\geq 1\right]
\leq \E[Z(T)] \leq \left(1-\frac{1}{n}\right)\E[Z(T-1)]\\
&\leq \cdots \leq \left(1-\frac{1}{n}\right)^T\E[Z(0)]
\leq \frac{n^2}{n^3}=\frac{1}{n}.
\end{align*}
\end{proof}
The following lemma is the asynchronous version of \cref{lem:N1N3ineq}. 
\begin{lemma}
\label{lem:async_N1N3ineq}
    Suppose that $X_v(0)\in \{1,2,3\}$ holds for all $v\in V$.
Then, for any $t\geq 0$, 
    \begin{align*}
        \E[N_1(t+1)N_3(t+1)\mid X(t)]
        = \left(1-\frac{N_1(t)+N_3(t)}{n^2}\right)N_1(t)N_3(t).
    \end{align*}
\end{lemma}
\begin{proof}
Note that \cref{eq:async_N1N3} in the proof of \cref{lem:extreme_async} also holds for the case of $X_v(0)\in \{1,2,3\}$. 
Hence, 
\begin{align*}
\E\left[N_1'N_3'\right]
&=N_1N_3+\frac{N_1N_3}{n^2}\left(N_{2}-n+N_{1}+N_{2}-n+N_{3}\right)
=N_1N_3\left(1-\frac{N_1+N_3}{n^2}\right).
\end{align*} 
\end{proof}
We can easily show the following lemma in the same way of \cref{lem:threeleft}.
\begin{lemma}
    \label{lem:threeleft_asyncversion}
    Suppose that $X_v(0)\in \{1,2,3\}$ holds for all $v\in V$.
    Let $T=\lceil Cn^{1.5}\log n\rceil$.
    Then, for some $0\leq t\leq T$, either of the following events occur w.h.p.:
\begin{enumerate}%[label=(\roman*)]
    \item \label{lab:async_difbig} $N_1(t)=0$ or $N_3(t)=0$.
    \item \label{lab:async_difsmall} $N_1(t)\leq 2\sqrt{n}$ and $N_3(t)\leq 2\sqrt{n}$.
    %(\textcolor{cyan}{Hence, opinion $2$ wins w.h.p.~from the PULL voting arguments?}{\blue See end of this section})
\end{enumerate}
\end{lemma}
%We can easily complete the rest parts of the proofs in the same way as \cref{lem:threeleft_newversion} and {\bf Completing the proof of Theorem \ref{ThKnSync}.}.

Using the techniques in \cref{subsec:completeKn}
%``Completing the proof of Theorem \ref{ThKnSync}'' (\cref{3values}) 
for the asynchronous process on $K_n$, we obtain the asynchronous version of Theorem~\ref{ThKnSync}.
\begin{theorem}
    {\sc Asynchronous incremental voting on $K_n$}. \\
Let the initial values {of the vertices of $K_n$} be chosen from $\{1,2,\dots,k\}$, where $k=o(n/(\log n)^2)$, and 
let $S(0)= \sum_{v\in V}X_v(0)
% $, where $S(0)
=cn$. \;\;
\begin{enumerate}[(i)]
\item
If $i < c < i+1$, then $\,\Pr(i \text{ wins}) \sim i+1-c$ and $\,\Pr(i+1 \text{ wins}) \sim c-i$. 
If $c =i (1+o(1))$, then $\Pr(i \text{ wins}) \sim 1$.
%{\blue The remaining opinions are reduced to values in $\{i,i+1\}$ in $O(k \log n)$ steps w.h.p.. (I'm not sure %about this sentence. TK) }
\item
The number of opinions is reduced to at most three consecutive values 
% $\{i-1,i,i+1\}$, 
in $O(nk\log n)$ steps w.h.p., and the expected time for the whole process to finish is $O(n^2)$.
\end{enumerate}
\end{theorem}

\section{Tools used in the analysis} % removed 'of K-n' CDC
\begin{lemma}[The Chernoff-Hoeffding inequalities]
\label{lem:Chernoff}
Let $X_1,\ldots,X_n$ be $n$ independent random variables taking values in $[0,1]$. Let $X=\sum_{i=1}^nX_i$.
Let $\mu^{-}\leq \E[X]\leq \mu^+$.
Then, we have the following:
\begin{enumerate}%[label=(\roman*)]
\item \label{upperTail}
$\Pr\left[X\geq (1+\e)\mu^+\right]\leq \exp\left(-\frac{\min\{\e^2,\,\e\}\mu^+}{3}\right)$, for $\e\geq 0$.
\item \label{lowerTail}
$\Pr\left[X\leq (1-\e)\mu^-\right]\leq \exp\left(-\frac{\e^2\mu^-}{2}\right)$, for $0\leq \e\leq 1$.
\item \label{symmetric}
$\Pr\left[X\not\in (\, (1-\e)\mu^-, (1+\e)\mu^+\,)\right] 
    \leq 2\exp\left(-\frac{\e^2\mu^-}{3}\right)$, for $0\leq \e\leq 1$.
\item \label{largeDeviation}
$\Pr\left[X\geq \a\mu^+\right]\leq \left(\frac{e^{\a - 1}}{\a^\a} \right)^\mu$, for $\a \ge 1$.
%\item $\Pr\left[X\geq k\right]\leq 2^{-k}$ for all $k\geq 2\mathrm{e}\E[X]$.
\end{enumerate}
\end{lemma}

\begin{lemma}[The Azuma-Hoeffding inequality]
\label{lem:Azuma-Hoeffding}
Let $(X_t)_{t=0,1,2,...}$ be a martingale.
Suppose $|X_i-X_{i-1}|\leq c_i$ holds for any $i\geq 0$.
Then, for any $T\geq 0$ and $\epsilon>0$,
\begin{align*}
\Pr\left[|X_T-X_0|\geq \epsilon\right]\leq 2\exp\left(-\frac{\epsilon^2}{2\sum_{i=1}^Tc_i^2}\right).
\end{align*}
\end{lemma}
The followings are the basic technical lemmas for the Hoeffding inequality.
\begin{lemma}
\label{lem:Hoeffding}
Let $X$ be a random variable such that $\E[X]=0$ and $a\leq X\leq b$.
Then, for any $\lambda>0$, $\E\left[\mathrm{e}^{\lambda X}\right]\leq \mathrm{e}^{\lambda^2(b-a)^2/8}$.
\end{lemma}
\begin{lemma}
\label{lem:AHineq}
For any $\alpha>0$ and $t$, suppose that $\E\left[\mathrm{e}^{\alpha(Y_t-Y_{t-1})}\mid \mathcal{F}_{t-1}\right]\leq \mathrm{e}^{ \alpha^2c_t^2}$ holds for some $c_t$.
Then, for any $\epsilon>0$, $\Pr\left[|Y_T-Y_0|\geq \epsilon\right]\leq 2\exp\left(-\frac{\epsilon^2}{4\sum_{t=1}^Tc_t^2}\right)$.
\end{lemma}
\end{proof}

\end{document}